\patchcmd{\@citex}{\if@filesw}{\getcitekey\@citeb \if@filesw}%
    {\typeout{*** SUCCESS ***}}{\typeout{*** FAIL ***}}
\patchcmd{\nocite}{\if@filesw}{\getcitekey\@citeb \if@filesw}%
    {\typeout{*** SUCCESS ***}}{\typeout{*** FAIL ***}}
\newenvironment{renumerate}{%
	\begin{enumerate}[label=(\roman{*}), ref=(\roman{*})]
}{%
	\end{enumerate}%
}
\newenvironment{aenumerate}{%
	\begin{enumerate}[label=(\alph{*}), ref=(\alph{*})]
}{%
	\end{enumerate}%
}
\newcommand{\Dmod}{\mathscr{D}}
\newcommand{\Mmod}{\mathcal{M}}
\newcommand{\Nmod}{\mathcal{N}}
\newcommand{\derR}{\mathbf{R}}
\newcommand{\decal}[1]{\lbrack #1 \rbrack}
\newcommand{\shH}{\mathcal{H}}
\newcommand{\tensor}{\otimes}
\newcommand{\shHom}{\mathcal{H}\hspace{-1pt}\mathit{om}}
\newcommand{\ZZ}{\mathbb{Z}}
\newcommand{\QQ}{\mathbb{Q}}
\newcommand{\RR}{\mathbb{R}}
\newcommand{\CC}{\mathbb{C}}
\newcommand{\HH}{\mathbb{H}}
\newcommand{\PP}{\mathbb{P}}
\newcommand{\menge}[2]{\bigl\{ \thinspace #1 \thinspace\thinspace \big\vert%
\thinspace\thinspace #2 \thinspace \bigr\}}
\newcommand{\Menge}[2]{\Bigl\{ \thinspace #1 \thinspace\thinspace \Big\vert%
\thinspace\thinspace #2 \thinspace \Bigr\}}
\DeclareMathOperator{\id}{id}
\renewcommand{\Im}{\operatorname{Im}}
\renewcommand{\Re}{\operatorname{Re}}
\DeclareMathOperator{\Supp}{Supp}
\DeclareMathOperator{\codim}{codim}
\DeclareMathOperator{\gr}{gr}
\DeclareMathOperator{\DR}{DR}
\DeclareMathOperator{\End}{End}
\DeclareMathOperator{\Hom}{Hom}
\DeclareMathOperator{\GL}{GL}
\DeclareMathOperator{\Pic}{Pic}
\newcommand{\define}[1]{\emph{#1}}
\newcommand{\shf}[1]{\mathscr{#1}}
\newcommand{\OX}{\shf{O}_X}
\newcommand{\OmX}{\Omega_X}
\def\overbar#1#2#3{{%
	\setbox0=\hbox{\displaystyle{#1}}%
	\dimen0=\wd0
	\advance\dimen0 by -#2 
	\vbox {\nointerlineskip \moveright #3 \vbox{\hrule height 0.3pt width \dimen0}%
		\nointerlineskip \vskip 1.5pt \box0}%
}}
\newcommand{\into}{\hookrightarrow}
\newcommand{\HR}{H_{\RR}}
\newcommand{\HC}{H_{\CC}}
\newcommand{\fu}{f^{\ast}}
\newcommand{\fl}{f_{\ast}}
\newcommand{\il}{i_{\ast}}
\newcommand{\qu}{q^{\ast}}
\newcommand{\tl}{t_{\ast}}
\newcommand{\ql}{q_{\ast}}
\newcommand{\fp}{f_{+}}
\newcommand{\DD}{\mathbb{D}}
\newcommand{\shF}{\shf{F}}
\newcommand{\shG}{\shf{G}}
\newcommand{\shE}{\shf{E}}
\newcommand{\shO}{\shf{O}}
\let\@@seccntformat\@seccntformat
\renewcommand*{\@seccntformat}[1]{%
  \expandafter\ifx\csname @seccntformat@#1\endcsname\relax
    \expandafter\@@seccntformat
  \else
    \expandafter
      \csname @seccntformat@#1\expandafter\endcsname
  \fi
    {#1}%
}
\newcommand*{\@seccntformat@subsection}[1]{%
  \textbf{\csname the#1\endcsname.}
}
\let\@paragraph\paragraph
\renewcommand*{\paragraph}[1]{%
	\vspace{0.3\baselineskip}%
	\@paragraph{\textit{#1}}%
}
\newtheorem{theorem}[equation]{Theorem}
\newtheorem*{theorem*}{Theorem}
\newtheorem{lemma}[equation]{Lemma}
\newtheorem*{lemma*}{Lemma}
\newtheorem{corollary}[equation]{Corollary}
\newtheorem*{corollary*}{Corollary}
\newtheorem{proposition}[equation]{Proposition}
\newtheorem*{proposition*}{Proposition}
\newtheorem*{conjecture*}{Conjecture}
\theoremstyle{definition}
\newtheorem{definition}[equation]{Definition}
\newtheorem*{definition*}{Definition}
\theoremstyle{remark}
\newtheorem*{remark}{Remark}
\newtheorem*{problem}{Problem}
\newtheorem{example}[equation]{Example}
\newtheorem*{example*}{Example}
\newtheorem*{problem*}{Problem}
\newtheorem*{note}{Note}
\theoremstyle{plain}
\newcommand{\theoremref}[1]{\hyperref[#1]{Theorem~\ref*{#1}}}
\newcommand{\lemmaref}[1]{\hyperref[#1]{Lemma~\ref*{#1}}}
\newcommand{\definitionref}[1]{\hyperref[#1]{Definition~\ref*{#1}}}
\newcommand{\propositionref}[1]{\hyperref[#1]{Proposition~\ref*{#1}}}
\newcommand{\conjectureref}[1]{\hyperref[#1]{Conjecture~\ref*{#1}}}
\newcommand{\corollaryref}[1]{\hyperref[#1]{Corollary~\ref*{#1}}}
\newcommand{\exampleref}[1]{\hyperref[#1]{Example~\ref*{#1}}}
\newcommand{\exerciseref}[1]{\hyperref[#1]{Exercise~\ref*{#1}}}
\let\old@caption\caption
\renewcommand*{\caption}[1]{%
	\setcounter{figure}{\value{equation}}%
	\stepcounter{equation}%
	\old@caption{#1}\relax%
}
\newcounter{intro}
\newtheorem{intro-conjecture}[intro]{Conjecture}
\newtheorem{intro-corollary}[intro]{Corollary}
\newtheorem{intro-theorem}[intro]{Theorem}
\newcommand{\OA}{\mathscr{O}_A}
\newcommand{\OY}{\shO_Y}
\newcommand{\parref}[1]{\hyperref[#1]{\S\ref*{#1}}}
\newcommand{\chapref}[1]{\hyperref[#1]{Chapter~\ref*{#1}}}
\newcommand*\if@single[3]{%
  \setbox0\hbox{${\mathaccent"0362{#1}}^H$}%
  \setbox2\hbox{${\mathaccent"0362{\kern0pt#1}}^H$}%
  \ifdim\ht0=\ht2 #3\else #2\fi
  }
\newcommand*\rel@kern[1]{\kern#1\dimexpr\macc@kerna}
\newcommand*\widebar[1]{\@ifnextchar^{{\wide@bar{#1}{0}}}{\wide@bar{#1}{1}}}
\newcommand*\wide@bar[2]{\if@single{#1}{\wide@bar@{#1}{#2}{1}}{\wide@bar@{#1}{#2}{2}}}
\newcommand*\wide@bar@[3]{%
  \begingroup
  \def\mathaccent##1##2{%
    \if#32 \let\macc@nucleus\first@char \fi
    \setbox\z@\hbox{$\macc@style{\macc@nucleus}_{}$}%
    \setbox\tw@\hbox{$\macc@style{\macc@nucleus}{}_{}$}%
    \dimen@\wd\tw@
    \advance\dimen@-\wd\z@
    \divide\dimen@ 3
    \@tempdima\wd\tw@
    \advance\@tempdima-\scriptspace
    \divide\@tempdima 10
    \advance\dimen@-\@tempdima
    \ifdim\dimen@>\z@ \dimen@0pt\fi
    \rel@kern{0.6}\kern-\dimen@
    \if#31
      \overline{\rel@kern{-0.6}\kern\dimen@\macc@nucleus\rel@kern{0.4}\kern\dimen@}%
      \advance\dimen@0.4\dimexpr\macc@kerna
      \let\final@kern#2%
      \ifdim\dimen@<\z@ \let\final@kern1\fi
      \if\final@kern1 \kern-\dimen@\fi
    \else
      \overline{\rel@kern{-0.6}\kern\dimen@#1}%
    \fi
  }%
  \macc@depth\@ne
  \let\math@bgroup\@empty \let\math@egroup\macc@set@skewchar
  \mathsurround\z@ \frozen@everymath{\mathgroup\macc@group\relax}%
  \macc@set@skewchar\relax
  \let\mathaccentV\macc@nested@a
  \if#31
    \macc@nested@a\relax111{#1}%
  \else
    \def\gobble@till@marker##1\endmarker{}%
    \futurelet\first@char\gobble@till@marker#1\endmarker
    \ifcat\noexpand\first@char A\else
      \def\first@char{}%
    \fi
    \macc@nested@a\relax111{\first@char}%
  \fi
  \endgroup
}
\newcommand{\HM}[2]{\operatorname{HM}_{\RR}(#1, #2)}
\newcommand{\HMC}[2]{\operatorname{HM}_{\CC}(#1, #2)}
\newcommand{\Dbcoh}{\mathrm{D}_{\mathit{coh}}^{\mathit{b}}}
\newcommand{\Dbc}{\mathrm{D}_{\mathit{c}}^{\mathit{b}}}
\newcommand{\omX}{\omega_X}
\renewcommand{\DD}{\mathbf{D}}
\newcommand{\OT}{\shO_T}
\newcommand{\CCrho}{\CC_{\rho}}
\newcommand{\CCrhob}{\CC_{\rhob}}
\DeclareMathOperator{\Char}{Char}
\newcommand{\varH}{\mathcal{H}}
\newcommand{\varHR}{\varH_{\RR}}
\newcommand{\varHC}{\varH_{\CC}}
\newcommand{\rhob}{\bar\rho}
\newcommand{\ratM}{M_{\RR}}
\newcommand{\ratMC}{M_{\CC}}
\newcommand{\ratJC}{J_{\CC}}
\begin{document}

\title{Hodge modules on complex tori and generic vanishing for compact K\"ahler manifolds}

\author{Giuseppe Pareschi}
\address{Universit\`a di Roma ``Tor Vergata'', Dipartimento di Matematica,
Via della Ricerca Scientifica, I-00133 Roma, Italy}
\email{pareschi@axp.mat.uniroma2.it}

\author{Mihnea Popa}
\address{Department of Mathematics, Northwestern University,
2033 Sheridan Road, Evanston, IL 60208, USA} 
\email{mpopa@math.northwestern.edu}

\author{Christian Schnell}
\address{Department of Mathematics, Stony Brook University, Stony Brook, NY 11794-3651}
\email{cschnell@math.sunysb.edu}

\begin{abstract}
We extend the results of generic vanishing theory to polarizable real Hodge
modules on compact complex tori, and from there to arbitrary compact K\"ahler
manifolds. As applications, we obtain a bimeromorphic characterization of
compact complex tori (among compact K\"ahler manifolds), semi-positivity results, 
and a description of the Leray filtration for maps to tori.
\end{abstract}
\date{\today}
\maketitle

\markboth{GIUSEPPE PARESCHI, MIHNEA POPA, AND CHRISTIAN SCHNELL} 
{GENERIC VANISHING ON COMPACT K\"AHLER MANIFOLDS}


\section{Introduction}

The term ``generic vanishing'' refers to a collection of theorems about the
cohomology of line bundles with trivial first Chern class. The first results of this
type were obtained by Green and Lazarsfeld in the late 1980s \cite{GL1,GL2}; they
were proved using classical Hodge theory and are therefore valid on arbitrary compact
K\"ahler manifolds.  About ten years ago, Hacon \cite{Hacon} found a more algebraic
approach, using vanishing theorems and the Fourier-Mukai transform, that has led to
many additional results in the projective case; see also \cite{PP3,ChenJiang,PS}. The
purpose of this paper is to show that the newer results are in fact also valid on
arbitrary compact K\"ahler manifolds. 

Besides \cite{Hacon}, our motivation also comes from a 2013 paper by Chen and Jiang
\cite{ChenJiang} in which they prove, roughly speaking, that the direct image of the
canonical bundle under a generically finite morphism to an abelian variety is
semi-ample. Before we can state more precise results, recall the following useful
definitions (see \parref{par:GV-sheaves} for more details).

\begin{definition*}
Given a coherent $\OT$-module $\shF$ on a compact complex torus $T$, define
\[
	S^i(T, \shF) = \menge{L \in \Pic^0(T)}{H^i(T, \shF \tensor L) \neq 0}.
\]
We say that $\shF$ is a \define{GV-sheaf} if $\codim S^i(T, \shF) \geq i$ for every
$i \geq 0$; we say that $\shF$ is \define{M-regular} if $\codim S^i(T, \shF) \geq
i+1$ for every $i \geq 1$.
\end{definition*}

Hacon \cite[\S4]{Hacon} showed that if $f \colon X \to A$ is a morphism from a smooth
projective variety to an abelian variety, then the higher direct image sheaves $R^j
\fl \omX$ are GV-sheaves on $A$; in the special case where $f$ is generically finite
over its image, Chen and Jiang \cite[Theorem~1.2]{ChenJiang} proved the much stronger
result that $\fl \omX$ is, up to tensoring by line bundles in $\Pic^0(A)$, the direct
sum of pullbacks of M-regular sheaves from quotients of $A$. Since GV-sheaves are
nef, whereas M-regular sheaves are ample, one should think of this as saying that
$\fl \omX$ is not only nef but actually semi-ample. One of our main results is the
following generalization of this fact.

\begin{intro-theorem}\label{thm:direct_image}
Let $f \colon X \to T$ be a holomorphic mapping from a compact K\"ahler manifold to a
compact complex torus. Then for $j \geq 0$, one has a decomposition
\[
	R^j \fl \omX \simeq \bigoplus_{k=1}^n \bigl( \qu_k \shF_k \tensor L_k \bigr),
\]
where each $\shF_k$ is an M-regular (hence ample) coherent sheaf with projective support 
on the compact complex torus $T_k$, each $q_k \colon T \to T_k$ is a surjective morphism with connected fibers,
and each $L_k \in \Pic^0(T)$ has finite order. In particular, $R^j \fl \omX$ is a
GV-sheaf on $T$.
\end{intro-theorem}

This leads to quite strong positivity properties for higher direct images of
canonical bundles under maps to tori. For instance, if $f$ is a surjective map which
is a submersion away from a divisor with simple normal crossings, then $R^j \fl \omX$
is a semi-positive vector bundle on $T$.  See \parref{par:semi-positivity} for more
on this circle of ideas.

One application of \theoremref{thm:direct_image} is the following effective criterion
for a compact K\"ahler manifold to be bimeromorphically equivalent to a torus; this
generalizes a well-known theorem by Chen and Hacon in the projective case \cite{CH1}.

\begin{intro-theorem}\label{torus_intro}
A compact K\"ahler manifold $X$ is bimeromorphic to a compact complex
torus if and only if $\dim H^1(X, \CC) = 2 \dim X$ and $P_1(X) = P_2(X) = 1$.  
\end{intro-theorem}

The proof is inspired by the approach to the Chen-Hacon theorem given in
\cite{Pareschi}; even in the projective case, however, the result in
\corollaryref{cor:gen-finite} greatly simplifies the existing proof. In
\theoremref{thm:jiang}, we deduce that the Albanese map of a compact K\"ahler
manifold with $P_1(X) = P_2( X) = 1$ is surjective with connected fibers; in the
projective case, this was first proved by Jiang \cite{jiang}, as an effective version
of Kawamata's theorem about projective varieties of Kodaira dimension zero.
It is likely that the present methods can also be applied to the classification
of  compact K\"ahler manifolds with $\dim H^1(X, \CC) = 2\dim X$ and small
plurigenera; for the projective case, see for instance \cite{CH-pisa} and the
references therein.

In a different direction, \theoremref{thm:direct_image} combined with results in
\cite{LPS} leads to a concrete description of the Leray filtration on the cohomology
of $\omX$, associated with a holomorphic mapping $f \colon X \to T$ as above. Recall
that, for each $k \geq 0$, the Leray filtration is a decreasing filtration
$L^{\bullet} H^k(X, \omX)$ with the property that
\[
	\gr_L^i H^k(X, \omX) = H^i \bigl( T, R^{k-i} \fl \omX \bigr).
\]
One can also define a natural decreasing filtration $F^{\bullet} H^k (X, \omega_X)$ induced by the cup-product 
action of $H^1(T, \shO_T)$, namely 
\[
	F^i H^k(X, \omX) = \Im \left( \bigwedge^i H^1(T, \shO_T) \tensor H^{k-i} (X, \omX) \to H^{k}(X, \omX)\right).
\]

\begin{intro-theorem}\label{leray_intro}
The filtrations $L^{\bullet} H^k(X, \omX)$ and $F^{\bullet} H^k(X, \omX)$ coincide.
\end{intro-theorem}

A dual description of the filtration on global holomorphic forms is given in
\corollaryref{cor:Leray_forms}. Despite the elementary nature of the statement, we do
not know how to prove \theoremref{leray_intro} using only methods from classical
Hodge theory; finding a more elementary proof is an interesting problem.

Our approach to \theoremref{thm:direct_image} is to address generic vanishing for a
larger class of objects of Hodge-theoretic origin, namely polarizable real Hodge
modules on compact complex tori. This is not just a matter of higher generality;  we
do not know how to prove \theoremref{thm:direct_image} using methods of classical
Hodge theory in the spirit of \cite{GL1}. This is precisely due to the lack of an
\emph{a priori} description of the Leray filtration on $H^k (X, \omega_X)$ as in
\theoremref{leray_intro}.

The starting point for our proof of \theoremref{thm:direct_image} is a result by
Saito \cite{Saito-Kae}, which says that the coherent $\OT$-module $R^j \fl \omX$ is
part of a polarizable real Hodge module $M = (\Mmod, F_{\bullet} \Mmod, \ratM) \in
\HM{T}{\dim X + j}$ on the torus $T$; more precisely, 
\begin{equation} \label{eq:Saito}
	R^j \fl \omX \simeq \omega_T \tensor F_{p(M)} \Mmod
\end{equation}
is the first nontrivial piece in the Hodge filtration $F_{\bullet} \Mmod$ of the
underlying regular holonomic $\Dmod$-module $\Mmod$. (Please see \parref{par:RHM} for
some background on Hodge modules.) Note that $M$ is supported on the image
$f(X)$, and that its restriction to the smooth locus of $f$ is the polarizable
variation of Hodge structure on the $(\dim f + j)$-th cohomology of the fibers. The
reason for working with real coefficients is that the polarization is induced by a
choice of K\"ahler form in $H^2(X, \RR) \cap H^{1,1}(X)$; the variation of Hodge
structure itself is of course defined over $\ZZ$.

In light of \eqref{eq:Saito}, \theoremref{thm:direct_image} is a consequence of the
following general statement about polarizable real Hodge modules on compact complex tori.

\begin{intro-theorem} \label{thm:Chen-Jiang}
Let $M = (\Mmod, F_{\bullet} \Mmod, \ratM) \in \HM{T}{w}$ be a polarizable real Hodge
module on a compact complex torus $T$. Then for each $k \in \ZZ$, the coherent
$\OT$-module $\gr_k^F \Mmod$ decomposes as
\[
	\gr_k^F \Mmod \simeq \bigoplus_{j=1}^n 
		\bigl( \qu_j \shF_j \tensor_{\OT} L_j \bigr),
\]
where $q_j \colon T \to T_j$ is a surjective map with connected fibers to a 
complex torus, $\shF_j$ is an M-regular coherent sheaf on $T_j$ with projective
support, and $L_j \in \Pic^0(T)$. If $M$ admits an integral structure, then each
$L_j$ has finite order.
\end{intro-theorem}

Let us briefly describe the most important elements in the proof. In \cite{PS}, we
already exploited the relationship between generic vanishing and Hodge modules on
abelian varieties, but the proofs relied on vanishing theorems. What allows us to go
further is a beautiful new idea by Botong Wang \cite{Wang}, also dating to 2013,
namely that up to taking
direct summands and tensoring by unitary local systems, every polarizable real Hodge
module on a complex torus actually comes from an abelian variety. (Wang showed this
for Hodge modules of geometric origin.) This is a version with coefficients of Ueno's
result \cite{Ueno} that every irreducible subvariety of $T$ is a torus bundle over a
projective variety, and is proved by combining this geometric fact with some
arguments about variations of Hodge structure.

The existence of the decomposition in \theoremref{thm:Chen-Jiang} is due to the fact
that the regular holonomic $\Dmod$-module $\Mmod$ is semi-simple, hence isomorphic to
a direct sum of simple regular holonomic $\Dmod$-modules. This follows from a theorem
by Deligne and Nori \cite{Deligne}, which says that the local system underlying a
polarizable real variation of Hodge structure on a Zariski-open subset of a compact
K\"ahler manifold is semi-simple. It turns out that the decomposition of $\Mmod$ into
simple summands is compatible with the Hodge filtration $F_{\bullet} \Mmod$; in order
to prove this, we introduce the category of ``polarizable complex Hodge
modules'' (which are polarizable real Hodge modules together with an endomorphism 
whose square is minus the identity), and show that every simple summand of $\Mmod$
underlies a polarizable complex Hodge module in this sense.

\begin{note}
Our ad-hoc definition of complex Hodge modules is good enough for the purposes of
this paper. As of 2016, a more satisfactory treatment, in terms of
$\Dmod$-modules and distribution-valued pairings, is currently being developed by
Claude Sabbah and the third author. The reader is advised to consult the website
\begin{center}
\url{www.cmls.polytechnique.fr/perso/sabbah.claude/MHMProject/mhm.html}
\end{center}
for more information.
\end{note}

The M-regularity of the individual summands in \theoremref{thm:Chen-Jiang} turns out
to be closely related to the Euler characteristic of the corresponding
$\Dmod$-modules. The results in \cite{PS} show that when $(\Mmod, F_{\bullet} \Mmod)$
underlies a polarizable complex Hodge module on an abelian variety $A$, the Euler
characteristic satisfies $\chi(A, \Mmod) \geq 0$, and each coherent $\OA$-module
$\gr_k^F \Mmod$ is a GV-sheaf. The new result (in \lemmaref{lem:M-regular}) is that
each $\gr_k^F \Mmod$ is actually M-regular, provided that $\chi(A, \Mmod) > 0$. 
That we can always get into the situation where the Euler characteristic is positive
follows from some general results about simple holonomic $\Dmod$-modules from
\cite{Schnell-holo}. 

\theoremref{thm:Chen-Jiang} implies that each graded quotient $\gr_k^F \Mmod$ with
respect to the Hodge filtration is a GV-sheaf, the K\"ahler analogue of a result in
\cite{PS}. However, the stronger formulation above is new even in the case of smooth
projective varieties, and has further useful consequences. One such is the following:
for a holomorphic mapping $f \colon X \to T$ that is generically finite onto its
image, the locus
\[
	S^0 (T, f_* \omega_X) = \menge{L \in \Pic^0(T)}%
		{H^i(T, f_*\omega_X \tensor_{\OT} L) \neq 0}
\] 
is preserved by the involution $L \mapsto L^{-1}$ on $\Pic^0(T)$; see \corollaryref{cor:gen-finite}. 
This is a crucial ingredient in the proof of \theoremref{torus_intro}.

Going back to Wang's paper \cite{Wang}, its main purpose was to prove Beauville's
conjecture, namely that on a compact K\"ahler manifold $X$, every irreducible
component of every $\Sigma^k(X) = \menge{\rho \in \Char(X)}{H^k(X, \CCrho) \neq
0}$ contains characters of finite order. In the projective case, this is of course a
famous theorem by Simpson \cite{Simpson}. Combining the structural
\theoremref{thm:CHM-main} with known results about Hodge modules on abelian varieties
\cite{Schnell-laz} allows us to prove the following generalization of Wang's theorem
(which dealt with Hodge modules of geometric origin).

\begin{intro-theorem} \label{thm:finite-order}
If a polarizable real Hodge module $M  \in \HM{T}{w}$ on a compact complex torus
admits an integral structure, then the sets
\[
	S_m^i(T, M) 
		= \menge{\rho \in \Char(T)}{\dim H^i(T, \ratM \tensor_{\RR} \CCrho) \geq m}
\]
are finite unions of translates of linear subvarieties by points of finite order.
\end{intro-theorem}

The idea is to use Kronecker's theorem (about algebraic integers all of whose
conjugates have absolute value one) to prove that certain characters have finite
order. Roughly speaking, the characters in question are unitary because of the
existence of a polarization on $M$, and they take values in the group of algebraic
integers because of the existence of an integral structure on $M$.

\noindent
{\bf Projectivity questions.}
 We conclude by noting that many of the results in this paper can be placed in the broader context of the following problem:
 how far are natural geometric or sheaf theoretic constructions on compact K\"ahler manifolds in general, 
 and on compact complex tori in particular, from being determined by similar constructions on projective 
 manifolds? Theorems \ref{thm:direct_image} and \ref{thm:Chen-Jiang} provide the answer on tori
 in the case of Hodge-theoretic constructions. We thank J. Koll\'ar for suggesting this point of view, and also the 
statements of the problems in the paragraph below.
 
Further structural results could provide a general machine for reducing certain questions about K\"ahler manifolds to 
the algebraic setting. For instance, by analogy with positivity conjectures in the algebraic case, one hopes for the following result
in the case of varying families: if $X$ and $Y$ are compact K\"ahler manifolds and 
 $f: X \rightarrow Y$ is a fiber space of maximal variation,  i.e. such that the general fiber is bimeromorphic 
 to at most countably many other fibers, then $Y$ is projective. More generally, for an arbitrary such $f$, 
 is there a mapping $g: Y \rightarrow Z$ with $Z$ projective, such that the fibers of $f$ are bimeromorphically 
 isotrivial over those of $Y$?

A slightly more refined version in the case when $Y = T$ is a torus, which is essentially a combination of Iitaka fibrations and Ueno's conjecture, 
is this: there should exist a morphism $h: X \rightarrow Z$, where $Z$ is a variety of general type 
generating an abelian quotient $g : T \rightarrow A$, such that the fibers of $h$ have Kodaira dimension $0$ and 
are bimeromorphically isotrivial over the fibers of $g$.

\section{Real and complex Hodge modules}

\subsection{Real Hodge modules}
\label{par:RHM}

In this paper, we work with polarizable real Hodge modules on complex manifolds. This
is the natural setting for studying compact K\"ahler manifolds, because the
polarizations induced by K\"ahler forms are defined over $\RR$ (but usually not
over $\QQ$, as in the projective case).  Saito originally developed the theory of
Hodge modules with rational coefficients, but as explained in \cite{Saito-Kae},
everything works just as well with real coefficients, provided one relaxes the
assumptions about local monodromy: the eigenvalues of the monodromy operator on the
nearby cycles are allowed to be arbitrary complex numbers of absolute value one,
rather than just roots of unity.  This has already been observed several times in the
literature \cite{SV}; the point is that Saito's theory rests on certain results
about polarizable variations of Hodge structure \cite{Schmid,Zucker,CKS}, which hold
in this generality.

Let $X$ be a complex manifold. We first recall some terminology. 

\begin{definition}
We denote by $\HM{X}{w}$ the category of polarizable real Hodge modules of weight
$w$; this is a semi-simple $\RR$-linear abelian category, endowed with a faithful functor
to the category of real perverse sheaves. 
\end{definition}

Saito constructs $\HM{X}{w}$ as a full subcategory of the category of all filtered
regular holonomic $\Dmod$-modules with real structure, in several stages. To begin
with, recall that a \define{filtered regular holonomic $\Dmod$-module with real
structure} on $X$ consists of the following four pieces of data: (1) a regular
holonomic left $\Dmod_X$-module $\Mmod$; (2) a good filtration $F_{\bullet} \Mmod$ by
coherent $\OX$-modules; (3) a perverse sheaf $\ratM$ with coefficients in $\RR$; (4)
an isomorphism $\ratM \tensor_{\RR} \CC \simeq \DR(\Mmod)$. Although the isomorphism
is part of the data, we usually suppress it from the notation and simply
write $M = (\Mmod, F_{\bullet} \Mmod, \ratM)$.  The \define{support} $\Supp M$ is
defined to be the support of the underlying perverse sheaf $\ratM$; one says that $M$
has \define{strict support} if $\Supp M$ is irreducible and if $M$ has no nontrivial
subobjects or quotient objects that are supported on a proper subset of $\Supp M$. 

Now $M$ is called a \define{real Hodge module of weight $w$} if it satisfies several
additional conditions that are imposed by recursion on the dimension of $\Supp M$.
Although they are not quite stated in this way in \cite{Saito-HM}, the essence of
these conditions is that (1) every Hodge module decomposes into a sum of Hodge
modules with strict support,
and (2) every Hodge module with strict support is generically a variation of Hodge
structure, which uniquely determines the Hodge module. Given $k \in \ZZ$, set $\RR(k)
= (2\pi i)^k \RR \subseteq \CC$; then one has the \define{Tate twist}
\[
	M(k) = \bigl( \Mmod, F_{\bullet - k} \Mmod, \ratM \tensor_{\RR} \RR(k) \bigr)
		\in \HM{X}{w-2k}.
\]
Every real Hodge module of weight $w$ has a well-defined \define{dual} $\DD M$,
which is a real Hodge module of weight $-w$ whose underlying perverse sheaf is the
Verdier dual $\DD \ratM$. A \define{polarization} is an isomorphism of real Hodge
modules $\DD M \simeq M(w)$, subject to certain conditions that are again imposed
recursively; one says that $M$ is \define{polarizable} if it admits at least one
polarization. 

\begin{example}
Every polarizable real variation of Hodge structure of weight $w$ on $X$ gives rise
to an object of $\HM{X}{w + \dim X}$. If $\varH$ is such a variation, we denote the
underlying real local system by $\varHR$, its complexification by $\varHC = \varHR
\tensor_{\RR} \CC$, and the corresponding flat bundle by $(\varH, \nabla)$; then
$\varH \simeq \varHC \tensor_{\CC} \OX$. The flat connection makes $\varH$ into a
regular holonomic left $\Dmod$-module, filtered by $F_{\bullet} \varH = F^{-\bullet}
\varH$; the real structure is given by the real perverse sheaf $\varHR \decal{\dim X}$.
\end{example}

We list a few useful properties of polarizable real Hodge modules. By definition,
every object $M \in \HM{X}{w}$ admits a locally finite \define{decomposition by
strict support}; when $X$ is compact, this is a finite decomposition
\[
	M \simeq \bigoplus_{j=1}^n M_j,
\]
where each $M_j \in \HM{X}{w}$ has strict support equal to an irreducible analytic
subvariety $Z_j \subseteq X$. There are no nontrivial morphisms between Hodge modules
with different strict support; if we assume that $Z_1, \dotsc, Z_n$ are distinct, the
decomposition by strict support is therefore unique. Since the category $\HM{X}{w}$
is semi-simple, it follows that every polarizable real Hodge module of weight $w$ is
isomorphic to a direct sum of simple objects with strict support.

One of Saito's most important results is the following structure theorem relating
polarizable real Hodge modules and polarizable real variations of Hodge structure.

\begin{theorem}[Saito] \label{thm:Saito-structure}
The category of polarizable real Hodge modules of weight $w$ with strict support $Z
\subseteq X$ is equivalent to the category of generically defined polarizable real
variations of Hodge structure of weight $w - \dim Z$ on $Z$.
\end{theorem}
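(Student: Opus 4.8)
The plan is to realize the equivalence concretely: in one direction one restricts a Hodge module to the generic point of its support, in the other direction one forms the minimal (intermediate) extension of a variation. First I would set up the restriction functor. By Saito's axioms, a polarizable real Hodge module $M = (\Mmod, F_{\bullet}\Mmod, \ratM)$ of weight $w$ with strict support $Z$ is \emph{smooth} over a dense Zariski-open subset $U$ of the smooth locus of $Z$: there $\ratM$ is a real local system placed in degree $\dim Z$, and $(\Mmod, F_{\bullet}\Mmod)$ is the filtered flat bundle attached to a polarizable real variation of Hodge structure $\varH_M$ of weight $w - \dim Z$. Sending $M$ to the germ of $\varH_M$ at the generic point of $Z$ — equivalently, to its restriction to a small enough $U$ — is the functor whose quasi-inverse must be produced, and on morphisms it is just restriction.

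Next I would record two formal reductions. Faithfulness is immediate from the definition of strict support: if $f\colon M\to N$ (both with strict support $Z$) vanishes on $U$, then $\im f$ is simultaneously a quotient of $M$ and a subobject of $N$ supported on the proper analytic subset $Z\setminus U$, hence $\im f = 0$. For the rest, the key point is a \emph{reconstruction} statement built into Saito's setup: strict support forces the underlying $\Dmod$-module $\Mmod$ to be the intermediate extension $\jlsl$ of the flat bundle $(\varH_M,\nabla)$ from $U$, and $F_{\bullet}\Mmod$ is the unique good filtration turning this into a polarizable real Hodge module of weight $w$. Granting reconstruction, fullness follows because a morphism $\varH_M\to\varH_N$ of generic variations extends uniquely to a morphism of intermediate-extension $\Dmod$-modules, which is automatically strict with respect to $F$ and compatible with the real structures, hence a morphism $M\to N$; and essential surjectivity becomes the statement that the intermediate extension of \emph{any} generically defined polarizable real variation of Hodge structure of weight $w-\dim Z$ on $Z$ underlies an object of $\HM{X}{w}$ with strict support $Z$.

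That last statement I would prove by induction on $d=\dim Z$, which is precisely how Saito's category is built. When $d=0$ a Hodge module with strict support a point is nothing but a polarizable real Hodge structure of weight $w$, matching a ``variation on a point''. For $d\ge 1$, given a polarizable real variation $\varH$ on an open $U\subseteq Z_{\mathrm{reg}}$, I would work locally on $X$ near a general point of the boundary, choose a holomorphic function $t$ cutting $Z$ along a hypersurface, and build the Hodge filtration on the $\Dmod$-module $\jlsl(\varH,\nabla)$ out of the Kashiwara--Malgrange $V$-filtration along $t$ by Saito's recipe. Checking that this is a Hodge module amounts to the recursive conditions: that the $\alpha$-nearby cycles $\psi_{t,\alpha}$ and the unipotent vanishing cycles $\phi_{t,1}$ of the candidate are polarizable real Hodge modules on the lower-dimensional set $Z\cap\{t=0\}$ — which holds by the inductive hypothesis, since those objects are built from the limit mixed Hodge structures of $\varH$, whose existence and properties come from the asymptotic theory of polarized variations \cite{Schmid,Zucker,CKS} — that the object has strict support $Z$, and that the polarization of $\varH$ propagates. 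Uniqueness (reconstruction) then lets the local constructions glue and simultaneously shows the two functors are mutually quasi-inverse.

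The main obstacle is exactly this verification of Saito's axioms for the $V$-filtration construction: that the filtration defined through $V$ is good, that the numerous strictness statements hold, and that nearby cycles, vanishing cycles, duality and polarization forms interact compatibly with it. This is the technical core of \cite{Saito-HM,Saito-MHM}, resting on the nilpotent-orbit and $\SL_2$-orbit theorems for polarized variations; in the present paper there is nothing to add, and one simply cites it — which is what the label ``[Saito]'' on the statement signals.
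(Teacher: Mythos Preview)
Your proposal is a reasonable outline of Saito's own argument, but you should be aware that the paper does not prove this theorem at all: after restating its content in one sentence, the authors simply write that ``the proof in \cite[Theorem~3.21]{Saito-MHM} carries over to the case of real coefficients; see \cite{Saito-Kae} for further discussion.'' You correctly anticipated this in your final paragraph --- the label ``[Saito]'' indeed signals a quoted result, and there is no in-paper proof to compare against beyond that citation.
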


In other words, for any $M \in \HM{X}{w}$ with strict support $Z$, there is a dense
Zariski-open subset of the smooth locus of $Z$ over which it restricts to a
polarizable real variation of Hodge structure; conversely, every such variation
extends uniquely to a Hodge module with strict support $Z$. The proof in
\cite[Theorem~3.21]{Saito-MHM} carries over to the case of real coefficients; see
\cite{Saito-Kae} for further discussion.

\begin{lemma} \label{lem:Kashiwara}
The support of $M \in \HM{X}{w}$ lies in a submanifold $i \colon Y \into
X$ if and only if $M$ belongs to the image of the functor $\il \colon \HM{Y}{w} \to
\HM{X}{w}$.
\end{lemma}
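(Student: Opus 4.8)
The plan is to prove the nontrivial implication — that a Hodge module supported on $Y$ comes from $Y$ — by reducing to the case of strict support and invoking Saito's structure theorem \theoremref{thm:Saito-structure}; the reverse implication is essentially formal. First, suppose $M \simeq \il N$ for some $N = (\Nmod, F_{\bullet} \Nmod, \ratN) \in \HM{Y}{w}$. The functor $\il$ on Hodge modules refines Saito's direct image for the proper (here finite) morphism $i$, and on underlying data it is computed by $i_+$ on the regular holonomic $\Dmod$-module and by $i_*$ on the perverse sheaf. Since $i$ is a closed embedding, $i_*$ is exact on perverse sheaves, so the perverse sheaf underlying $M$ is $i_* \ratN$; hence $\Supp M = i(\Supp \ratN) \subseteq Y$.

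For the converse, assume $\Supp M \subseteq Y$. Using the decomposition by strict support, write $M \simeq \bigoplus_{j=1}^n M_j$ with each $M_j \in \HM{X}{w}$ of strict support an irreducible analytic subvariety $Z_j \subseteq \Supp M \subseteq Y$. As $\il$ is additive, it suffices to produce each $M_j$ from $Y$, so we may assume $M$ has strict support $Z \subseteq Y$. By \theoremref{thm:Saito-structure}, over a dense Zariski-open subset $U$ of the smooth locus of $Z$ the module $M$ restricts to a polarizable real variation of Hodge structure $\varH$ of weight $w - \dim Z$. Now $Z$ is also an irreducible analytic subvariety of $Y$ with the same smooth locus, so applying \theoremref{thm:Saito-structure} on $Y$ to $\varH$ yields an object $N \in \HM{Y}{w}$ with strict support $Z$ whose restriction to $U$ is $\varH$. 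Since $i$ is a closed (hence proper, and finite) embedding, $\il N$ lies in $\HM{X}{w}$ and has strict support $Z \subseteq X$; and because $\DR(i_+ \Nmod)$ agrees with $\DR(\Nmod)$ over $U$, the restriction of $\il N$ to $U$ is again $\varH$. By the uniqueness clause in \theoremref{thm:Saito-structure}, $M \simeq \il N$, as desired.

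The only delicate point — the step I would be most careful about — is the compatibility between the Hodge-theoretic functor $\il$ and the underlying $\Dmod$-module and perverse-sheaf pushforwards: that $\il$ preserves strict support and the generic variation of Hodge structure, so that the two applications of \theoremref{thm:Saito-structure} (on $Y$ and on $X$) genuinely match up, and that $\il$ sends $\HM{Y}{w}$ into $\HM{X}{w}$ (preserving polarizability, since a closed immersion is projective). This rests on the classical Kashiwara equivalence — exactness of $i_*$ on perverse sheaves supported along $Y$, together with the corresponding statement for regular holonomic $\Dmod$-modules — which Saito's structure theorem then upgrades to the level of Hodge modules. Everything else in the argument is routine.
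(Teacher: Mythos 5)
The paper does not actually supply a proof of this lemma; it states it and then offers a single remark pointing at the technical heart of the matter, namely that the graded pieces $F_k \Mmod / F_{k-1} \Mmod$ of the Hodge filtration are $\OY$-modules (so that not just the $\Dmod$-module and perverse sheaf but also the good filtration descends to $Y$). Your argument is correct, but it follows a genuinely different route: you decompose by strict support, apply Saito's structure theorem (\theoremref{thm:Saito-structure}) to extract the generic polarizable real variation of Hodge structure $\varH$ on a dense open $U$ in the smooth locus of $Z$, apply the structure theorem a second time on $Y$ to produce $N \in \HM{Y}{w}$ with strict support $Z$, check that $\il N$ lands in $\HM{X}{w}$ with strict support $Z$ and restricts to $\varH$ over $U$, and invoke uniqueness. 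That is a clean derivation of the lemma \emph{given} \theoremref{thm:Saito-structure}. Its weakness is that it inverts the logical order in Saito's theory: Kashiwara's equivalence is established early, at the level of filtered $\Dmod$-modules, precisely via the observation the paper records, and is then used as input in the recursive construction underlying the structure theorem. So your argument is a valid consequence-of-the-structure-theorem derivation rather than a foundational proof, and you should not offer it as the latter. Two small points worth tightening: the assertion that $\il N$ has strict support $Z$ deserves a sentence (it follows from exactness and faithfulness of $i_*$ on perverse sheaves for a closed embedding, so a nontrivial sub- or quotient object of $\il N$ supported on a proper subset would pull back to one of $N$); and the real content missing from your sketch, and the one the paper flags, is precisely the compatibility of $\il$ with the Hodge filtration — that the filtered direct image is again strict and that the filtration is the one coming from $\OY$-module structures on the graded pieces — which your appeal to the structure theorem packages away but which is where the actual work lies.
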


This result is often called \define{Kashiwara's equivalence}, because Kashiwara
proved the same thing for arbitrary coherent $\Dmod$-modules. In the case of
Hodge modules, the point is that the coherent $\OX$-modules $F_k \Mmod /
F_{k-1} \Mmod$ are in fact $\OY$-modules.

\subsection{Compact K\"ahler manifolds and semi-simplicity}

In this section, we prove some results about the underlying regular holonomic
$\Dmod$-modules of polarizable real Hodge modules on compact K\"ahler manifolds.
Our starting point is the following semi-simplicity theorem, due to
Deligne and Nori.

\begin{theorem}[Deligne, Nori]
Let $X$ be a compact K\"ahler manifold. If 
\[
	M = (\Mmod, F_{\bullet} \Mmod, \ratM) \in \HM{X}{w}, 
\]
then the perverse sheaf $\ratM$ and the $\Dmod$-module $\Mmod$ are
semi-simple.
\end{theorem}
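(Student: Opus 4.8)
The plan is to deduce the statement from the semi-simplicity of polarizable real variations of Hodge structure on Zariski-open subsets of compact Kähler manifolds, combined with Saito's structure theory for Hodge modules. First, using the decomposition by strict support (recalled in the excerpt), we may assume that $M$ has strict support equal to an irreducible analytic subvariety $Z \subseteq X$; since direct sums of semi-simple objects are semi-simple, it suffices to treat each summand separately. Note that $Z$ is itself a compact Kähler space (being a closed analytic subvariety of a compact Kähler manifold), and after passing to a resolution we are reduced to the situation of a genuine compact Kähler manifold.

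By \theoremref{thm:Saito-structure}, $M$ corresponds to a generically defined polarizable real variation of Hodge structure $\varH$ of weight $w - \dim Z$ on $Z$, i.e.\ there is a dense Zariski-open subset $U$ of the smooth locus $Z_{\mathrm{sm}}$ over which $\ratM$ restricts to $\varHR\decal{\dim Z}$. The key input is the Deligne--Nori theorem that the local system $\varHR$ underlying such a variation is semi-simple; this is where compact Kähler-ness is essential, and it is proved by Deligne \cite{Deligne} (and Nori) using the existence of a polarization together with results on degenerating variations of Hodge structure \cite{Schmid,CKS,Zucker}. I would then argue that semi-simplicity of the restricted local system $\ratM\restr{U}$ propagates to semi-simplicity of the perverse sheaf $\ratM$ on all of $X$: each simple summand of $\varHR$ extends, via the intermediate extension functor $j_{!\ast}$ (where $j \colon U \into X$), to a simple perverse sheaf on $X$ with strict support $Z$, and under the equivalence of \theoremref{thm:Saito-structure} these are precisely the simple Hodge module summands of $M$. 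Thus $\ratM \simeq \bigoplus_\alpha j_{!\ast}(\varHR^{(\alpha)}\decal{\dim Z})$ with each summand simple, so $\ratM$ is semi-simple as a real perverse sheaf.

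Finally, for the $\Dmod$-module $\Mmod$: applying the de Rham functor $\DR$, which is an equivalence between regular holonomic $\Dmod$-modules and complex perverse sheaves (Riemann--Hilbert), the decomposition $\ratM \simeq \bigoplus_\alpha \ratM^{(\alpha)}$ complexifies to a decomposition $\DR(\Mmod) \simeq \bigoplus_\alpha \DR(\Mmod)^{(\alpha)}$ into subobjects, each of which is a direct sum of simple complex perverse sheaves (a simple real perverse sheaf complexifies to either a simple one or a sum of two). Pulling this back through Riemann--Hilbert gives a decomposition of $\Mmod$ into simple regular holonomic $\Dmod$-modules, so $\Mmod$ is semi-simple. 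The main obstacle is really the first reduction step together with citing Deligne--Nori correctly: one must be careful that ``generically defined'' variations of Hodge structure on a singular base $Z$ still fall under the scope of the Deligne--Nori semi-simplicity theorem, which is why passing to a resolution of $Z$ (and checking that the Hodge module, hence its generic variation, pulls back compatibly) is the technically delicate point; everything after that is a formal consequence of $j_{!\ast}$ being exact and preserving simplicity, plus Riemann--Hilbert.
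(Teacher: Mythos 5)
Your proposal is correct and follows essentially the same route as the paper: reduce via strict support to an irreducible support $Z$, invoke Saito's structure theorem (\theoremref{thm:Saito-structure}) to pass to a generically defined polarizable real variation of Hodge structure on $Z$, resolve singularities by blowing up inside the ambient compact K\"ahler manifold so that the variation lives on a Zariski-open subset of a compact K\"ahler manifold, apply the Deligne--Nori semi-simplicity theorem to $\varHR$, and conclude via the intermediate extension and Riemann--Hilbert. The only cosmetic difference is that the paper first uses the semi-simplicity of $\HM{X}{w}$ to reduce to $M$ simple (so that $\ratM$ is literally the intersection complex of a single $\varHR$), whereas you keep the reduction at the strict-support level and appeal instead to the additivity of $j_{!\ast}$; these are interchangeable.

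One small slip worth flagging: the parenthetical claim that the simple local-system summands of $\varHR$ ``are precisely the simple Hodge module summands of $M$'' is not quite right. Deligne--Nori gives semi-simplicity of $\varHR$ as a real local system, but that decomposition into simple local systems need not be compatible with the Hodge filtration, so the resulting simple perverse sheaves $j_{!\ast}\bigl(\varHR^{(\alpha)}\decal{\dim Z}\bigr)$ are in general not the perverse sheaves underlying simple Hodge submodules of $M$ (indeed, when $M$ is simple in $\HM{X}{w}$ and $\varHR$ is nevertheless reducible as a local system, no nontrivial sub-local-system can underlie a sub-variation). Establishing a decomposition that \emph{is} compatible with the Hodge filtration is exactly the content of \propositionref{prop:complexification} and requires additional work. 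Fortunately your argument does not use that claim: the additivity of $j_{!\ast}$ together with the fact that $j_{!\ast}$ of a simple local system is a simple perverse sheaf already give semi-simplicity of $\ratM$ as a perverse sheaf, which is all the statement asks for, and the passage to $\Mmod$ via complexification and Riemann--Hilbert is then as you describe.
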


\begin{proof}
Since the category $\HM{X}{w}$ is semi-simple, we may assume without loss of
generality that $M$ is simple, with strict support an irreducible analytic subvariety
$Z \subseteq X$. By Saito's \theoremref{thm:Saito-structure}, $M$ restricts to a
polarizable real variation of Hodge structure $\varH$ of weight $w - \dim Z$ on a
Zariski-open subset of the smooth locus of $Z$; note that $\varH$ is a simple object
in the category of real variations of Hodge structure. Now $\ratM$
is the intersection complex of $\varHR$, and so it suffices to prove that $\varHR$ is
semi-simple. After resolving singularities, we can assume that $\varH$ is defined on
a Zariski-open subset of a compact K\"ahler manifold; in that case, Deligne and Nori
have shown that $\varHR$ is semi-simple \cite[\S1.12]{Deligne}. It follows that the
complexification $\ratM \tensor_{\RR} \CC$ of the perverse sheaf is semi-simple as
well; by the Riemann-Hilbert correspondence, the same is true for the underlying
regular holonomic $\Dmod$-module $\Mmod$.  
\end{proof}

A priori, there is no reason why the decomposition of the regular holonomic
$\Dmod$-module $\Mmod$ into simple factors should lift to a decomposition in the
category $\HM{X}{w}$. Nevertheless, it turns out that we can always chose the
decomposition in such a way that it is compatible with the filtration $F_{\bullet}
\Mmod$.

\begin{proposition} \label{prop:complexification}
Let $M \in \HM{X}{w}$ be a simple polarizable real Hodge module on a compact K\"ahler
manifold. Then one of the following two statements is true:
\begin{enumerate}
\item The underlying perverse sheaf $\ratM \tensor_{\RR} \CC$ is simple.
\item There is an endomorphism $J \in \End(M)$ with $J^2 = -\id$ such that
\[	
	\bigl( \Mmod, F_{\bullet} \Mmod, \ratM \tensor_{\RR} \CC \bigr)
	= \ker(J - i \cdot \id) \oplus \ker(J + i \cdot \id),
\]
and the perverse sheaves underlying $\ker(J \pm i \cdot \id)$ are simple.
\end{enumerate}
\end{proposition}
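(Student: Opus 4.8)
The plan is to analyze the semisimple perverse sheaf $\ratM$ and distinguish cases according to whether its complexification $\ratM \tensor_{\RR} \CC$ stays simple. First I would use the semi-simplicity theorem above: since $M$ is simple, $\ratM$ is a simple real perverse sheaf. Now $\ratM \tensor_{\RR} \CC$ is a semisimple complex perverse sheaf, and by a standard argument about extension of scalars from $\RR$ to $\CC$, exactly one of two things happens: either $\ratM \tensor_{\RR} \CC$ is simple, or it splits as a direct sum of two non-isomorphic simple complex perverse sheaves $P_1 \oplus P_2$ which are exchanged by the natural complex-conjugation-linear involution coming from the real structure. (These are the only possibilities because $\End(\ratM)$ is a finite-dimensional $\RR$-division algebra, hence $\RR$, $\CC$, or $\HH$; but $\End(\ratM) \tensor_\RR \CC$ must be $\CC$, $\CC \oplus \CC$, or $M_2(\CC)$ respectively, and only $\RR$ and $\CC$ are compatible with $\ratM \tensor_\RR \CC$ being semisimple without being a sum of two isomorphic pieces — the quaternionic case would force a repeated summand, which is excluded because a simple polarizable Hodge module with a self-conjugate simple complex summand of multiplicity two would contradict strict-support simplicity, or can be ruled out directly from the polarization.) In the first case we are in situation (1), so assume we are in the second case.

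In the second case, write $\End(\ratM) = \CC$, so there is a canonical element $J \in \End(\ratM)$ with $J^2 = -\id$, namely the action of $i \in \CC$. The key point is that this $J$ is not merely an endomorphism of the perverse sheaf but lifts to an endomorphism of the Hodge module $M$ itself. To see this I would invoke the faithfulness of the functor $\HM{X}{w} \to \{\text{real perverse sheaves}\}$ together with Saito's structure theorem \theoremref{thm:Saito-structure}: on a dense Zariski-open subset of the smooth locus of $Z = \Supp M$, the Hodge module restricts to a simple polarizable real variation of Hodge structure $\varH$, and $\End$ of $M$ coincides with the endomorphisms of $\varH$ as a real VHS, which in turn is a sub-division-algebra of $\End(\varHR) = \End(\ratM)|_{\text{generic}} = \CC$. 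Since endomorphisms of a real VHS are automatically morphisms of Hodge structure (they are flat and, being generic specializations, preserve the Hodge filtration by rigidity/the theorem of the fixed part), the element $J$ acting on $\varHR$ is a morphism of real VHS, hence by \theoremref{thm:Saito-structure} extends to $J \in \End(M)$ with $J^2 = -\id$.

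Once $J \in \End(M)$ is in hand, I would decompose $M$ — compatibly with all of its structure, in particular with the filtration $F_\bullet \Mmod$ — into the two eigen-summands of $J$. Strictly speaking $J$ has no eigenvalues in the $\RR$-linear category $\HM{X}{w}$; but $J$ makes $\CC$ act on $M$, and tensoring the underlying filtered $\Dmod$-module $(\Mmod, F_\bullet \Mmod)$ and the complexified perverse sheaf $\ratM \tensor_\RR \CC$ through this $\CC$-action, the idempotents $\tfrac12(\id \mp iJ)$ give the direct sum decomposition
\[
  \bigl( \Mmod, F_\bullet \Mmod, \ratM \tensor_\RR \CC \bigr)
  = \ker(J - i\cdot\id) \oplus \ker(J + i\cdot\id)
\]
as filtered $\Dmod$-modules with complex perverse structure. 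That the two summands have simple underlying perverse sheaves is exactly the statement that $\ratM \tensor_\RR \CC = P_1 \oplus P_2$ with $P_1, P_2$ simple, which we already established; one checks the eigen-summands of $J$ are precisely $P_1$ and $P_2$ because complex conjugation swaps them and acts on $J$ by $-1$. The main obstacle, and the step deserving the most care, is the lifting of $J$ from the perverse sheaf to the Hodge module — that is, verifying that the generic endomorphism of the real VHS genuinely respects the Hodge filtration and therefore, by Saito's equivalence, propagates to an endomorphism of $M$ over all of $X$, not just generically. Everything else is bookkeeping with idempotents.
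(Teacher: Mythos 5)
Your high-level route is the same as the paper's (restrict to the generic VHS via Saito's structure theorem, produce $J$ there, and push it back to an endomorphism of $M$), but the proof contains two genuine errors, one of which hides the entire content of the statement.

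\textbf{First, and most seriously:} you assert that ``endomorphisms of a real VHS are automatically morphisms of Hodge structure \dots\ they are flat and, being generic specializations, preserve the Hodge filtration by rigidity/the theorem of the fixed part.'' This is false as stated. The theorem of the fixed part gives $\End(\varHR)$ (the flat endomorphisms of the underlying local system) the structure of a weight-zero real Hodge structure, but only the $(0,0)$-part of that Hodge structure consists of endomorphisms of the \emph{variation} $\varH$; flat endomorphisms of type $(p,-p)$ with $p\neq 0$ do not preserve the Hodge filtration. So knowing $\End(\varHR)=\CC$ does \emph{not} immediately put the complex unit $J$ inside $\End(\varH)=\End(M)$; it only puts it in $\End(\varHR)$. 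This containment $\End(\varH)\subseteq\End(\varHR)$ is exactly the thing one must show is an equality (or at least rich enough to contain a square root of $-\id$), and that is where all the work lies. The paper handles this via the proof of Deligne's \cite[Proposition~1.13]{Deligne}: one first finds a simple $\Gamma$-submodule $V\subseteq\HC$ that is \emph{compatible with the Hodge decomposition}. Once that is in hand, the projection $p$ onto $V$ and its conjugate $\bar p$ automatically preserve the Hodge decomposition, hence are of type $(0,0)$, and so $J = i(p - \bar p)$ is a real flat endomorphism of type $(0,0)$ with $J^2 = -\id$, i.e.\ an honest endomorphism of $\varH$ that Saito's structure theorem then extends to $J\in\End(M)$. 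Without the Hodge-compatible submodule your argument has no way to guarantee $J$ respects $F_\bullet\Mmod$, which is the whole point of case (2).

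\textbf{Second:} you try to exclude the quaternionic case $\End(\ratM)\simeq\HH$ on the grounds that it ``would force a repeated summand, which is excluded because \dots\ would contradict strict-support simplicity, or can be ruled out directly from the polarization.'' Neither of these assertions is correct, and the case cannot be excluded: the paper's own \lemmaref{lem:simple-RVHS} deliberately allows the two simple factors to be \emph{possibly isomorphic}, and \propositionref{prop:semi-simple} together with the subsequent Note explicitly discuss the $\End(M)\simeq\HH$ possibility. There is also no need to exclude it: if $\End(M)\simeq\HH$, any imaginary unit $J$ furnishes the decomposition required in case (2), with the two eigen-summands isomorphic simple perverse sheaves. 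Your argument should simply absorb this case rather than claim (incorrectly) that it does not occur.

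Finally, a small imprecision: ``there is a canonical element $J$, namely the action of $i\in\CC$'' overstates things, since an abstract isomorphism $\End(\ratM)\simeq\CC$ determines $J$ only up to sign; this doesn't affect the proof but is worth stating carefully.
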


We begin by proving the following key lemma.

\begin{lemma} \label{lem:simple-RVHS}
Let $\varH$ be a polarizable real variation of Hodge structure on a Zariski-open
subset of a compact K\"ahler manifold. If $\varH$ is simple, then
\begin{aenumerate}
\item either the underlying complex local system $\varHC$ is also simple,
\item or there is an endomorphism $J \in \End(\varH)$ with $J^2 = -\id$, such that
\[
	\varHC = \ker(\ratJC - i \cdot \id) \oplus \ker(\ratJC + i \cdot \id)
\]
is the sum of two (possibly isomorphic) simple local systems.
\end{aenumerate}
\end{lemma}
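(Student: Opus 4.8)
The plan is to analyze the endomorphism algebra $E = \End(\varH)$ in the category of real polarizable variations of Hodge structure. Since $\varH$ is simple in this category, $E$ is a finite-dimensional division algebra over $\RR$, hence by Frobenius's theorem $E$ is isomorphic to $\RR$, $\CC$, or $\HH$. The key first step is to show that the endomorphism algebra of $\varHC = \varHR \tensor_\RR \CC$ as a complex local system is controlled by $E$: concretely, I would argue that $\End(\varHC) \simeq E \tensor_\RR \CC$. One direction is clear since real endomorphisms complexify; for the reverse inclusion one uses that $\varH$ underlies a variation of Hodge structure on a Zariski-open subset of a compact Kähler manifold, so by the semisimplicity theorem of Deligne--Nori (applied as in the proof of the Deligne--Nori theorem quoted above) $\varHC$ is semisimple, and moreover every endomorphism of the complex local system automatically respects the Hodge filtration (a standard rigidity argument: endomorphisms of a polarizable complex VHS are flat and preserve the Hodge decomposition, because the Hodge bundles are the eigenspaces of a fixed grading and a morphism of VHS is strict). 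Hence $\End(\varHC)$ is itself a complex VHS-endomorphism algebra, and counting dimensions via $\varHC = \varHR \tensor \CC$ gives $\End(\varHC) = E \tensor_\RR \CC$.

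Now I would split into the three cases for $E$. If $E = \RR$, then $\End(\varHC) = \CC$, so $\varHC$ is a simple complex local system and we are in case (a). If $E = \HH$, then $\End(\varHC) = \HH \tensor_\RR \CC \simeq M_2(\CC)$, which is the endomorphism algebra of $W \oplus W$ for a simple $W$; but a quaternionic structure forces $\varHR$ itself, and hence $\varHC$, to already be a sum of two copies of an irreducible object, and in fact one shows this case cannot occur for a \emph{simple} real VHS unless it reduces to the $\CC$ case after regrouping — more carefully, the relevant dichotomy is really between $E \supseteq \CC$ or not, so I would treat $E = \CC$ and $E = \HH$ together as ``$E$ contains a copy of $\CC$.'' In that case, pick $J \in E$ with $J^2 = -\id$ (the image of $i \in \CC \subseteq E$). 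Then $J$ acts on $\varHC$ with eigenvalues $\pm i$, giving the eigenspace decomposition
\[
	\varHC = \ker(\ratJC - i\cdot\id) \oplus \ker(\ratJC + i\cdot\id).
\]
Each summand is a complex local system, and I must check it is simple: any proper nonzero sub-local-system of, say, $\ker(\ratJC - i\cdot\id)$ would, together with its complex conjugate (which lands in the other eigenspace because complex conjugation swaps the two eigenspaces of the real endomorphism $J$), generate a proper nonzero sub-object of $\varHR \tensor \CC$ that is conjugation-stable, hence descends to a proper nonzero real sub-VHS of $\varH$, contradicting simplicity. This also shows the two summands are interchanged by conjugation, so they are either isomorphic or conjugate; both possibilities are allowed by the statement.

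The main obstacle I anticipate is the identification $\End(\varHC) = E \tensor_\RR \CC$, specifically the claim that \emph{every} endomorphism of the underlying complex local system is automatically a morphism of variations of Hodge structure. This requires knowing that $\varHC$, being semisimple, has the property that the projector onto any isotypic component is flat \emph{and} Hodge-theoretic; the flatness is immediate from semisimplicity of the monodromy representation, but the compatibility with the Hodge filtration needs the rigidity of polarizable variations over quasi-compact-Kähler bases — one invokes that a flat section of $\shExt$ between two polarizable VHS that exists as a map of local systems is forced to be a morphism of VHS by the theorem of the fixed part (or by Schmid's nilpotent/$\SL_2$-orbit estimates controlling the Hodge norm at infinity). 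Once that input is in hand, the rest is the elementary representation theory of division algebras over $\RR$ sketched above. Finally, to get $J \in \End(\varH)$ rather than just $\End(\varHR)$ one notes that the Hodge-theoretic endomorphism $J$ constructed from $E$ is by construction a morphism in $\HM{}{}$-sense, i.e. it preserves $F_\bullet$, which is exactly what is needed to feed into Proposition \ref{prop:complexification}.
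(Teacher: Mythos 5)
The central step of your argument --- the identification $\End(\varHC) = \End(\varH) \tensor_{\RR} \CC$, where the left side means endomorphisms of the complex \emph{local system} and the right side means endomorphisms of the real \emph{variation of Hodge structure} --- is false, and the theorem of the fixed part does not give it. What that theorem actually says is that the space of global flat sections of $\End(\varHC)$ carries a Hodge structure of weight zero; the morphisms of variations are only its $(0,0)$ Hodge classes, and these can form a strict subspace. A concrete counterexample is a constant simple real Hodge structure $\varH$ of rank two with complex multiplication (for instance $H^1$ of a fixed CM elliptic curve, viewed as a constant VHS): here $\End_{\mathrm{VHS}}(\varH) = \CC$, so your right-hand side is $\CC \tensor_{\RR} \CC \simeq \CC \oplus \CC$, but the monodromy is trivial, so the left-hand side is $\End_{\CC}(\CC^2) = M_2(\CC)$. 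The same defect undermines your simplicity argument for the eigenspaces $\ker(\ratJC \mp i\cdot\id)$: a proper sub-local-system of one eigenspace, together with its conjugate, does descend to a real sub-\emph{local-system} of $\varHR$, but there is no reason it should be a sub-\emph{variation}, so no contradiction with simplicity of $\varH$ arises.

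The paper avoids this trap by not trying to compare $\End_{\mathrm{loc.\,sys.}}$ and $\End_{\mathrm{VHS}}$ directly. Instead it quotes the proof of \cite[Proposition~1.13]{Deligne}, which produces a nontrivial \emph{simple} $\Gamma$-submodule $V \subseteq \HC$ that is in addition compatible with the Hodge decomposition. Because $V$ and its conjugate $\overline{V}$ are Hodge-compatible, the real space $(V + \overline{V}) \cap \HR$ is a $\Gamma$-invariant real sub-Hodge structure, and it is at this point --- to a Hodge-compatible piece --- that the theorem of the fixed part is applied to produce a genuine sub-variation, hence all of $\varH$ by simplicity. The dichotomy $V = \overline{V}$ versus $V \neq \overline{V}$ then yields cases (a) and (b), with $J = i(p - \bar p)$ built from the Hodge-type-$(0,0)$ projections; the simplicity of the two eigenspaces is automatic because $V$ was chosen to be a simple $\Gamma$-module. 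Your final Schur-lemma analysis of $\End(\varH) \in \{\RR, \CC, \HH\}$ is consistent with what the paper obtains \emph{a posteriori} (compare \propositionref{prop:semi-simple}), but it cannot replace the input from Deligne's Proposition~1.13: the entire content of the lemma is precisely the implication from simplicity of $\varH$ as a VHS to the very constrained decomposition of $\varHC$ as a local system, and that implication is what your $\End$-identification was silently assuming.
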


\begin{proof}
Since $X$ is a Zariski-open subset of a compact K\"ahler manifold, the theorem of the
fixed part holds on $X$, and the local system $\varHC$ is semi-simple
\cite[\S1.12]{Deligne}. Choose a base point $x_0 \in X$, and write $\HR$ for the
fiber of the local system $\varHR$ at the point $x_0$; it carries a polarizable 
Hodge structure
\[
	\HC = \HR \tensor_{\RR} \CC = \bigoplus_{p+q=w} H^{p,q},
\]
say of weight $w$. The fundamental group $\Gamma = \pi_1(X, x_0)$ acts on $\HR$, and as
we remarked above, $\HC$ decomposes into a sum of simple $\Gamma$-modules. The proof
of \cite[Proposition~1.13]{Deligne} shows that there is a nontrivial simple
$\Gamma$-module $V \subseteq \HC$ compatible with the Hodge decomposition, meaning
that
\[
	V = \bigoplus_{p+q=w} V \cap H^{p,q}.
\]
Let $\bar{V} \subseteq \HC$ denote the conjugate of $V$ with respect to the real
structure $\HR$; it is another nontrivial simple $\Gamma$-module with 
\[
	\bar{V} = \bigoplus_{p+q=w} \bar{V} \cap H^{p,q}.
\]
The intersection $(V + \bar{V}) \cap \HR$ is therefore a $\Gamma$-invariant real
sub-Hodge structure of $\HR$. By the theorem of the fixed part, it extends to a real
sub-variation of $\varH$; since $\varH$ is simple, this means
that $\HC = V + \bar{V}$. Now there are two possibilities. (1) If $V = \bar{V}$, then
$\HC = V$, and $\varHC$ is a simple local system. (2) If $V \neq \bar{V}$, then $\HC
= V \oplus \bar{V}$, and $\varHC$ is the sum of two (possibly isomorphic) simple local
systems. The endomorphism algebra $\End(\varHR)$ coincides with the subalgebra of
$\Gamma$-invariants in $\End(\HR)$; by the theorem of the fixed part, it is also a real
sub-Hodge structure. Let $p \in \End(\HC)$ and $\bar{p} \in \End(\HC)$ denote the
projections to the two subspaces $V$ and $\bar{V}$; both preserve the Hodge
decomposition, and are therefore of type $(0,0)$. This shows that the element $J =
i(p - \bar{p}) \in \End(\HC)$ is a real Hodge class of type $(0,0)$ with $J^2 =
-\id$; by the theorem of the fixed part, $J$ is the restriction to $x_0$ of an
endomorphism of the variation of Hodge structure $\varH$. This completes the proof
because $V$ and $\bar{V}$ are exactly the $\pm i$-eigenspaces of $J$.
\end{proof}

\begin{proof}[Proof of \propositionref{prop:complexification}]
Since $M$ is simple, it has strict support equal to an irreducible analytic
subvariety $Z \subseteq X$; by \theoremref{thm:Saito-structure}, $M$ is obtained from
a polarizable real variation of Hodge structure $\varH$ of weight $w - \dim Z$ on a
dense Zariski-open subset of the smooth locus of $Z$. Let $\varHR$ denote the
underlying real local system; then $\ratM$ is isomorphic to the intersection complex
of $\varHR$.  Since we can resolve the singularities of $Z$ by blowing up along
submanifolds of $X$, \lemmaref{lem:simple-RVHS} applies to this situation; it shows
that $\varHC = \varHR \tensor_{\RR} \CC$ has at most two simple factors. The same is
true for $\ratM \tensor_{\RR} \CC$ and, by the Riemann-Hilbert correspondence, for
$\Mmod$.

Now we have to consider two cases. If $\varHC$ is simple, then $\Mmod$ is also
simple, and we are done. If $\varHC$ is not simple, then by
\lemmaref{lem:simple-RVHS}, there is an endomorphism $J \in \End(\varH)$ with $J^2 =
-\id$ such that the two simple factors are the $\pm i$-eigenspaces of $J$. By
\theoremref{thm:Saito-structure}, it extends uniquely to an endomorphism of $J \in
\End(M)$ in the category $\HM{X}{w}$; in particular, we obtain an induced endomorphism 
\[
	J \colon \Mmod \to \Mmod
\]
that is strictly compatible with the filtration $F_{\bullet} \Mmod$ by
\cite[Proposition~5.1.14]{Saito-HM}. Now the $\pm i$-eigenspaces of $J$
give us the desired decomposition
\[
	(\Mmod, F_{\bullet} \Mmod) =
		(\Mmod', F_{\bullet} \Mmod') \oplus (\Mmod'', F_{\bullet} \Mmod'');
\]
note that the two regular holonomic $\Dmod$-modules $\Mmod'$ and $\Mmod''$ are simple
because the corresponding perverse sheaves are the intersection complexes of the
simple complex local systems $\ker(\ratJC \pm i \cdot \id)$, where $\ratJC$ stands
for the induced endomorphism of the complexification $\ratM \tensor_{\RR} \CC$.
\end{proof}

\subsection{Complex Hodge modules}
\label{par:CHM}

In Saito's recursive definition of the category of polarizable Hodge modules, the
existence of a real structure is crucial: to say that a given filtration on a complex
vector space is a Hodge structure of a certain weight, or that a given bilinear form
is a polarization, one needs to have complex conjugation. This explains why there is
as yet no general theory of ``polarizable complex Hodge modules'' -- although it
seems likely that such a theory can be constructed within the framework of twistor
$\Dmod$-modules developed by Sabbah and Mochizuki. We now explain a workaround for
this problem, suggested by \propositionref{prop:complexification}.

\begin{definition}
A \define{polarizable complex Hodge module} on a complex manifold $X$ is a pair
$(M, J)$, consisting of a polarizable real Hodge module $M \in \HM{X}{w}$ and an
endomorphism $J \in \End(M)$ with $J^2 = -\id$. 
\end{definition}

The space of morphisms between two polarizable complex Hodge modules $(M_1, J_1)$ and
$(M_2, J_2)$ is defined in the obvious way: 
\[
	\Hom \bigl( (M_1, J_1), (M_2, J_2) \bigr) = 
		\menge{f \in \Hom(M_1, M_2)}{f \circ J_1 = J_2 \circ f}
\]
Note that composition with $J_1$ (or equivalently, $J_2$) puts a natural complex
structure on this real vector space.

\begin{definition}
We denote by $\HMC{X}{w}$ the category of polarizable complex Hodge modules of weight
$w$; it is $\CC$-linear and abelian.
\end{definition}

From a polarizable complex Hodge module $(M, J)$, we obtain a filtered regular
holonomic $\Dmod$-module as well as a complex perverse sheaf, as follows. Denote by
\[
	\Mmod = \Mmod' \oplus \Mmod'' = \ker(J - i \cdot \id) \oplus \ker(J + i \cdot \id)
\]
the induced decomposition of the regular holonomic $\Dmod$-module underlying $M$, and
observe that $J \in \End(\Mmod)$ is strictly compatible with the Hodge filtration
$F_{\bullet} \Mmod$. This means that we have a decomposition
\[
	(\Mmod, F_{\bullet} \Mmod) = (\Mmod', F_{\bullet} \Mmod')
		\oplus (\Mmod'', F_{\bullet} \Mmod'')
\]
in the category of filtered $\Dmod$-modules. Similarly, let $\ratJC \in \End(\ratMC)$
denote the induced endomorphism of the complex perverse sheaf underlying $M$; then 
\[
	\ratMC = \ratM \tensor_{\RR} \CC =
		\ker(\ratJC - i \cdot \id) \oplus \ker(\ratJC + i \cdot \id),
\]
and the two summands correspond to $\Mmod'$ and $\Mmod''$ under the Riemann-Hilbert
correspondence. Note that they are isomorphic as \emph{real} perverse
sheaves; the only difference is in the $\CC$-action. We obtain a functor
\[
	(M, J) \mapsto \ker(\ratJC - i \cdot \id)
\]
from $\HMC{X}{w}$ to the category of complex perverse sheaves on $X$; it is faithful,
but depends on the choice of $i$. 

\begin{definition}
Given $(M, J) \in \HMC{X}{w}$, we call
\[
	\ker(\ratJC - i \cdot \id) \subseteq \ratMC
\]
the \define{underlying complex perverse sheaf}, and
\[
	(\Mmod', F_{\bullet} \Mmod') = \ker(J - i \cdot \id) 
		\subseteq (\Mmod, F_{\bullet} \Mmod)
\]
the \define{underlying filtered regular holonomic $\Dmod$-module}.
\end{definition}

There is also an obvious functor from polarizable real Hodge modules to polarizable
complex Hodge modules: it takes $M \in \HM{X}{w}$ to the pair 
\[
	\bigl( M \oplus M, J_M \bigr), 
		\quad J_M(m_1, m_2) = (-m_2, m_1).
\]
Not surprisingly, the underlying complex perverse sheaf is isomorphic to $\ratM
\tensor_{\RR} \CC$, and the underlying filtered regular holonomic $\Dmod$-module to
$(\Mmod, F_{\bullet} \Mmod)$. The proof of the following lemma is left as an easy exercise.

\begin{lemma}
A polarized complex Hodge module $(M, J) \in \HMC{X}{w}$ belongs to the image of
$\HM{X}{w}$ if and only if there exists $r \in \End(M)$ with 
\[
	r \circ J = -J \circ r \quad \text{and} \quad r^2 = \id.
\]
\end{lemma}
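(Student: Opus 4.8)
The plan is to unwind both directions directly from the definitions, the only subtlety being the bookkeeping of the two complex structures in play. Recall that the functor from $\HM{X}{w}$ to $\HMC{X}{w}$ sends $M$ to $(M \oplus M, J_M)$ with $J_M(m_1, m_2) = (-m_2, m_1)$. So suppose first that $(M, J)$ is isomorphic to $(M_0 \oplus M_0, J_{M_0})$ for some $M_0 \in \HM{X}{w}$. On $M_0 \oplus M_0$ define $r(m_1, m_2) = (m_1, -m_2)$; this is visibly an endomorphism of the real Hodge module $M_0 \oplus M_0$, satisfies $r^2 = \id$, and one checks $r J_{M_0}(m_1, m_2) = r(-m_2, m_1) = (-m_2, -m_1)$ while $J_{M_0} r(m_1, m_2) = J_{M_0}(m_1, -m_2) = (m_2, m_1)$, so $r \circ J_{M_0} = - J_{M_0} \circ r$. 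Transporting $r$ across the isomorphism gives the required endomorphism of $M$.

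Conversely, suppose $r \in \End(M)$ satisfies $r \circ J = - J \circ r$ and $r^2 = \id$. Since $r^2 = \id$ and $\HM{X}{w}$ is an $\RR$-linear abelian category, $M$ splits as $M = M_+ \oplus M_-$, the $\pm 1$-eigenspaces of $r$ (the projectors $\tfrac12(\id \pm r)$ are morphisms of real Hodge modules). The anticommutation relation says precisely that $J$ interchanges $M_+$ and $M_-$: for $m \in M_+$, $r(Jm) = -J(rm) = -Jm$, so $Jm \in M_-$, and symmetrically. Thus $J|_{M_+} \colon M_+ \to M_-$ and $J|_{M_-} \colon M_- \to M_+$, and $J^2 = -\id$ forces these to be mutually inverse up to sign; concretely, setting $M_0 := M_+$ and using the isomorphism $\varphi := -J|_{M_-}^{-1} \circ (\argbl) $ — or more cleanly, the identification $M_- \cong M_0$ via $J|_{M_+}$ — one obtains an isomorphism of real Hodge modules $M \cong M_0 \oplus M_0$ under which $J$ corresponds to $(m_1, m_2) \mapsto (-m_2, m_1)$, i.e. to $J_{M_0}$. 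One should double-check the sign conventions here: the map $M_0 \oplus M_0 \to M$, $(m_1, m_2) \mapsto m_1 + J m_2$ (with $m_1, m_2 \in M_+$) is an isomorphism of real Hodge modules, and it intertwines $J_{M_0}$ with $J$ since $J(m_1 + J m_2) = J m_1 + J^2 m_2 = -m_2 + J m_1$, which is the image of $(-m_2, m_1)$. Hence $(M, J)$ lies in the image of $\HM{X}{w}$, and compatibility with the underlying filtered $\Dmod$-module and perverse sheaf is automatic since those functors were defined through the same eigenspace decompositions.

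The only point requiring a little care — and the closest thing to an obstacle — is checking that all the maps constructed (the eigenprojectors of $r$, and the comparison isomorphism built from $J$) are genuine morphisms in the category $\HM{X}{w}$, not merely $\RR$-linear maps of the underlying perverse sheaves; but this is immediate because $r$ and $J$ are assumed to be endomorphisms of $M$ in $\HM{X}{w}$, and $\HM{X}{w}$ is abelian, so kernels, images, and direct sum decomposms stay in the category. This is exactly why the lemma is an easy exercise: there is no Hodge-theoretic input beyond the abelian and semi-simple formalism already recorded above.
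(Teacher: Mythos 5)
The paper explicitly leaves this lemma as an easy exercise, so there is no official proof to compare against; your argument is the natural one and is correct. Both directions check out: in the forward direction $r(m_1,m_2)=(m_1,-m_2)$ anticommutes with $J_{M_0}$ and transports back along any isomorphism in $\HMC{X}{w}$, and in the converse direction the eigenspace splitting $M=M_+\oplus M_-$ of the involution $r$ (legitimate since $\HM{X}{w}$ is $\RR$-linear abelian, so $\tfrac12(\id\pm r)$ are idempotent endomorphisms), together with the observation that $J$ swaps the eigenspaces, yields the isomorphism $(m_1,m_2)\mapsto m_1+Jm_2$ from $M_+\oplus M_+$ to $M$ intertwining $J_{M_0}$ with $J$, exactly as you verify.
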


In particular, $(M, J)$ should be isomorphic to its \define{complex conjugate} $(M,
-J)$, but this in itself does not guarantee the existence of a real structure -- for
example when $M$ is simple and $\End(M)$ is isomorphic to the quaternions $\HH$. 

\begin{proposition} \label{prop:semi-simple}
The category $\HMC{X}{w}$ is semi-simple, and the simple objects are of the following
two types:
\begin{renumerate}
\item \label{en:simple-1}
$(M \oplus M, J_M)$, where $M \in \HM{X}{w}$ is simple and $\End(M) = \RR$.
\item \label{en:simple-2}
$(M, J)$, where $M \in \HM{X}{w}$ is simple and $\End(M) \in \{\CC, \HH\}$.
\end{renumerate}
\end{proposition}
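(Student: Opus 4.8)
The plan is to reduce everything to the structure of the real endomorphism algebra $\End(M)$ of a simple real Hodge module, using the semi-simplicity of $\HM{X}{w}$ together with the observation that $\HMC{X}{w}$ is, by definition, a category of ``pairs'' over it. First I would recall that a simple object of an $\RR$-linear semi-simple abelian category has, by Schur's lemma, a division algebra over $\RR$ as its endomorphism ring, so $\End(M) \in \{\RR, \CC, \HH\}$ for each simple $M \in \HM{X}{w}$. Next I would show that $\HMC{X}{w}$ is semi-simple: any $(M,J)$ can be written, after decomposing the underlying $M = \bigoplus M_j$ into real simples and grouping isotypic pieces, in a way where $J$ is an $\End(M)$-linear operator; since each isotypic block has the form $M_j \otimes_{\End(M_j)} W_j$ for a finite-dimensional $\End(M_j)$-module $W_j$, and $J$ restricts to a $J_j$ on each block with $J_j^2 = -\id$, it suffices to decompose each $(M_j^{\oplus r}, J_j)$. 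This is a purely algebraic problem over the division algebra $D = \End(M_j)$: classify pairs (finite $D$-module, operator squaring to $-\id$) up to isomorphism.

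The heart of the argument is therefore a case analysis on $D$. If $D = \CC$ or $D = \HH$, then $D$ already contains an element $\iota$ with $\iota^2 = -1$, so on the simple block $M_j$ itself we may take $J = \iota \in \End(M_j)$, giving a simple complex Hodge module of type \ref{en:simple-2}; a short Schur-type computation shows $\End_{\HMC{X}{w}}(M_j, J)$ is a division algebra over $\CC$, hence $(M_j,J)$ is simple, and one checks every pair over such $D$ decomposes into copies of these. If $D = \RR$, there is no square root of $-1$ inside $\End(M_j)$, so no single copy of $M_j$ supports a $J$; the smallest object that does is $M_j \oplus M_j$ with $J_{M_j}(m_1,m_2) = (-m_2, m_1)$, which is exactly type \ref{en:simple-1}. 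I would verify simplicity of $(M_j \oplus M_j, J_{M_j})$ by computing $\End_{\HMC{X}{w}}(M_j\oplus M_j, J_{M_j})$: an endomorphism is a $2\times 2$ matrix over $\End(M_j) = \RR$ commuting with $J_{M_j}$, and the centralizer of $\begin{psmallmatrix} 0 & -1 \\ 1 & 0 \end{psmallmatrix}$ in $M_2(\RR)$ is $\CC$, so $(M_j\oplus M_j, J_{M_j})$ is simple; and any pair $(M_j^{\oplus 2r}, J)$ over $D = \RR$ is isomorphic to $r$ copies of it, since a real operator with $J^2 = -\id$ on a real vector space is, up to conjugation, a direct sum of the standard $2\times 2$ blocks.

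Assembling these blockwise decompositions gives the semi-simplicity of $\HMC{X}{w}$ and the stated list of simple objects. The main obstacle I anticipate is not any single hard step but keeping the bookkeeping clean when passing between $M$, its isotypic decomposition, and the pair $(M,J)$: one must be careful that the complex structure on $\Hom$-spaces coming from post-composition with $J$ (noted after the definition of $\HMC{X}{w}$) is the same as the $\CC$-action one gets from $D$ in the $D = \CC$ case, and that in the $D = \RR$ case the functor $M \mapsto (M\oplus M, J_M)$ really lands on simple objects rather than merely semisimple ones — both of which reduce to the elementary centralizer computations above. No input from Hodge theory beyond semi-simplicity of $\HM{X}{w}$ (already established) and Schur's lemma is needed; the rest is linear algebra over $\RR$, $\CC$, and $\HH$.
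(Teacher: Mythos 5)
Your proof follows essentially the same route as the paper's: decompose $(M,J)$ into isotypic blocks $(M_j^{\oplus n_j}, J_j)$, invoke Schur's lemma to pin down $\End(M_j) \in \{\RR, \CC, \HH\}$, and then classify pairs $(D^n, J)$ with $J^2 = -\id$ by linear algebra over each division algebra $D$. The $D = \RR$ and $D = \CC$ cases are handled identically in both proofs. The one step you have genuinely glossed over is the quaternionic case: ``one checks every pair over such $D$ decomposes into copies of these'' amounts to the assertion that any $J \in M_n(\HH)$ with $J^2 = -\id$ is conjugate over $\HH$ to a diagonal matrix, and this is the only part of the proposition that is not a textbook fact -- eigentheory over $\HH$ is delicate because $\HH$ is non-commutative. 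The paper handles it by passing to the complex ``adjoint matrix'' and citing \cite{Zhang} on normal quaternionic matrices. Your instinct that the step remains elementary is defensible, but the argument should be written out; a clean version is: restrict the right $\HH$-action on $V = \Hom(M, M^{\oplus n})$ to the subfield $\CC = \RR[i] \subseteq \HH$, so that $J$ becomes $\CC$-linear and $V = V_+ \oplus V_-$ decomposes into $(\pm i)$-eigenspaces; right multiplication by $j$ is a conjugate-linear bijection interchanging $V_+$ and $V_-$, hence $\dim_\CC V_+ = n$, and any $\CC$-basis $v_1, \dotsc, v_n$ of $V_+$ yields $V = \bigoplus_{k=1}^{n} v_k\HH$ with each summand a $J$-invariant rank-one $\HH$-submodule. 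With this spelled out, your proof is complete and arguably a bit cleaner than the paper's, since the decomposition only requires an invertible change of basis and so never needs the normality or unitarity machinery from \cite{Zhang}.
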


\begin{proof}
Since $\HM{X}{w}$ is semi-simple, every object of $\HMC{X}{w}$ is isomorphic to a
direct sum of polarizable complex Hodge modules of the form
\begin{equation} \label{eq:semi-simple}
	\bigl( M^{\oplus n}, J \bigr),
\end{equation}
where $M \in \HM{X}{w}$ is simple, and $J$ is an $n \times n$-matrix with entries in
$\End(M)$ such that $J^2 = -\id$. By Schur's lemma and the classification of real
division algebras, the endomorphism algebra of a
simple polarizable real Hodge module is one of $\RR$, $\CC$, or $\HH$. If
$\End(M) = \RR$, elementary linear algebra shows that $n$ must be even and that
\eqref{eq:semi-simple} is isomorphic to the direct sum of $n/2$ copies of
\ref{en:simple-1}. If $\End(M) = \CC$, one can diagonalize the matrix $J$; this
means that \eqref{eq:semi-simple} is isomorphic to a direct sum of $n$ objects of
type \ref{en:simple-2}. If $\End(M) = \HH$, it is still possible to diagonalize
$J$, but this needs some nontrivial results about matrices with entries in the quaternions
\cite{Zhang}. Write $J \in M_n(\HH)$ in the form $J = J_1 + J_2 j$, with $J_1, J_2
\in M_n(\CC)$, and consider the ``adjoint matrix''
\[
	\chi_J = \begin{pmatrix}
		J_1 & J_2 \\ - \overline{J_2} & \overline{J_1} 
	\end{pmatrix} \in M_{2n}(\CC).
\]
Since $J^2 = -\id$, one also has $\chi_J^2 = -\id$, and so the matrix $J$ is normal
by \cite[Theorem~4.2]{Zhang}. According to \cite[Corollary~6.2]{Zhang}, this implies
the existence of a unitary matrix $U \in M_n(\HH)$ such that $U^{-1} A U = i \cdot
\id$; here unitary means that $U^{-1} = U^{\ast}$ is equal to the conjugate transpose 
of $U$. The consequence is that \eqref{eq:semi-simple} is again isomorphic to a
direct sum of $n$ objects of type \ref{en:simple-2}. Since it is straightforward to
prove that both types of objects are indeed simple, this concludes the proof.
\end{proof}

\begin{note}
The three possible values for the endomorphism algebra of a simple object $M \in
\HM{X}{w}$ reflect the splitting behavior of its complexification $(M \oplus M, J_M)
\in \HMC{X}{w}$: if $\End(M) = \RR$, it remains irreducible; if $\End(M) = \CC$, it
splits into two non-isomorphic simple factors; if $\End(M) = \HH$, it splits into two
isomorphic simple factors. Note that the endomorphism ring of a simple polarizable
complex Hodge module is always isomorphic to $\CC$, in accordance with Schur's lemma. 
\end{note}

Our ad-hoc definition of the category $\HMC{X}{w}$ has the advantage that every
result about polarizable real Hodge modules that does not explicitly mention the
real structure extends to polarizable complex Hodge modules. For example, each $(M,
J) \in \HMC{X}{w}$ admits a unique decomposition by strict support: $M$ admits such a
decomposition, and since there are no nontrivial morphisms between objects with
different strict support, $J$ is automatically compatible with the decomposition.
For much the same reason, Kashiwara's equivalence (in \lemmaref{lem:Kashiwara}) holds
also for polarizable complex Hodge modules. 

Another result that immediately carries over is Saito's direct image theorem. The
strictness of the direct image complex is one of the crucial properties of polarizable
Hodge modules; in the special case of the morphism from a projective variety $X$ to a
point, it is equivalent to the $E_1$-degeneration of the spectral sequence
\[
	E_1^{p,q} = H^{p+q} \bigl( X, \gr_p^F \DR(\Mmod') \bigr)
		\Longrightarrow H^{p+q} \bigl( X, \DR(\Mmod') \bigr),
\]
a familiar result in classical Hodge theory when $\Mmod' = \OX$.

\begin{theorem}
Let $f \colon X \to Y$ be a projective morphism between complex manifolds. 
\begin{aenumerate}
\item If $(M, J) \in \HMC{X}{w}$, then for each $k \in \ZZ$, the pair
\[
	\shH^k \fl(M, J) = \bigl( \shH^k \fl M, \shH^k \fl J \bigr) \in \HMC{Y}{w+k}
\]
is again a polarizable complex Hodge module. 
\item The direct image complex $\fp(\Mmod', F_{\bullet} \Mmod')$ is strict, and
$\shH^k \fp(\Mmod', F_{\bullet} \Mmod')$ is the filtered regular holonomic
$\Dmod$-module underlying $\shH^k \fl(M, J)$.
\end{aenumerate}
\end{theorem}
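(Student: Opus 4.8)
The plan is to deduce everything from Saito's direct image theorem for polarizable \emph{real} Hodge modules, together with the definitions set up in \parref{par:CHM}. The point is that all the constructions here are $\RR$-linear and only the $J$-compatibility has to be tracked by hand.

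\textbf{Step 1: Reduce to the real statement.} Given $(M,J) \in \HMC{X}{w}$, Saito's direct image theorem (see \cite[Th\'eor\`eme~1]{Saito-MHM}, or \cite{Saito-HM}) applies to the underlying polarizable real Hodge module $M \in \HM{X}{w}$: for a projective morphism $f \colon X \to Y$ and each $k \in \ZZ$, the perverse cohomology $\shH^k \fl M$ lies in $\HM{Y}{w+k}$, the filtered direct image complex $\fp(\Mmod, F_\bullet \Mmod)$ is strict, and $\shH^k \fp(\Mmod, F_\bullet \Mmod)$ is the filtered regular holonomic $\Dmod$-module underlying $\shH^k \fl M$. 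First I would invoke this to get all the objects in sight.

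\textbf{Step 2: Transport the endomorphism.} The functor $\shH^k \fl$ is additive on $\HM{X}{w}$, so the endomorphism $J \in \End(M)$ induces $\shH^k \fl J \in \End(\shH^k \fl M)$; functoriality gives $(\shH^k \fl J)^2 = \shH^k \fl(J^2) = \shH^k \fl(-\id) = -\id$. Hence $\bigl(\shH^k \fl M, \shH^k \fl J\bigr)$ is a pair of the required type, proving \ref{en:simple-1}-style part (a), namely that it lies in $\HMC{Y}{w+k}$. Then I would note that the decomposition $\Mmod = \Mmod' \oplus \Mmod'' = \ker(J - i) \oplus \ker(J + i)$ is a decomposition \emph{of filtered $\Dmod$-modules} — this is exactly the content recalled just before the theorem, using \cite[Proposition~5.1.14]{Saito-HM} for strict compatibility of $J$ with $F_\bullet \Mmod$. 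Because $\fp$ is an additive (triangulated) functor, $\fp(\Mmod, F_\bullet\Mmod) = \fp(\Mmod', F_\bullet\Mmod') \oplus \fp(\Mmod'', F_\bullet\Mmod'')$, and strictness of a direct sum is equivalent to strictness of each summand; so strictness of $\fp(\Mmod, F_\bullet\Mmod)$ from Step 1 yields strictness of $\fp(\Mmod', F_\bullet\Mmod')$. This gives part (b).

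\textbf{Step 3: Match up the $\Dmod$-module underlying the complex Hodge module.} Finally I would check that $\shH^k \fp(\Mmod', F_\bullet\Mmod')$ is indeed the filtered regular holonomic $\Dmod$-module underlying $\shH^k \fl(M,J)$, i.e.\ the $\ker\bigl(\shH^k\fl J - i\cdot\id\bigr)$-summand. This is because $\shH^k$ of the decomposed complex splits as $\shH^k\fp(\Mmod',F_\bullet\Mmod') \oplus \shH^k\fp(\Mmod'',F_\bullet\Mmod'')$, and under the identification from Step 1 this is the eigenspace decomposition of $\shH^k \fl J$ with eigenvalues $\pm i$ — the $+i$ eigenspace being, by definition of the underlying filtered $\Dmod$-module of a complex Hodge module, exactly $\shH^k\fp(\Mmod',F_\bullet\Mmod')$. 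I expect essentially no obstacle: the only thing to be careful about is that passing between ``abelian-category direct sum'' and ``eigenspace decomposition of $J$'' commutes with the functors $\fp$ and $\shH^k$, which follows from their additivity; everything else is a direct quotation of the real case.
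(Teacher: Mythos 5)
Your proposal is correct and follows essentially the same route as the paper's proof: invoke Saito's direct image theorem for the underlying real Hodge module $M$, note that the induced endomorphism $\shH^k \fl J$ squares to $-\id$, and deduce strictness of $\fp(\Mmod', F_{\bullet}\Mmod')$ from strictness of $\fp(\Mmod, F_{\bullet}\Mmod)$ using the filtered decomposition $\Mmod = \Mmod' \oplus \Mmod''$. The paper cites \cite[Th\'eor\`eme~5.3.1]{Saito-HM} and leaves the eigenspace identification of Step 3 implicit, but your elaboration of it is exactly right.
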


\begin{proof}
Since $M \in \HM{X}{w}$ is a polarizable real Hodge module, we have $\shH^k \fl M \in
\HM{Y}{w+k}$ by Saito's direct image theorem \cite[Th\'eor\`eme~5.3.1]{Saito-HM}. Now
it suffices to note that $J \in \End(M)$ induces an endomorphism $\shH^k \fl J \in
\End \bigl( \shH^k \fl M \bigr)$ whose square is equal to minus the identity. Since
\[
	(\Mmod, F_{\bullet} \Mmod) = (\Mmod', F_{\bullet} \Mmod') \oplus
		(\Mmod'', F_{\bullet} \Mmod''),
\]
the strictness of the complex $\fp(\Mmod', F_{\bullet} \Mmod')$ follows from that of
$\fp(\Mmod, F_{\bullet} \Mmod)$, which is part of the above-cited theorem by Saito.
\end{proof}

On compact K\"ahler manifolds, the semi-simplicity results from the previous
section can be summarized as follows.

\begin{proposition} \label{prop:CHM-Kaehler}
Let $X$ be a compact K\"ahler manifold. 
\begin{aenumerate}
\item A polarizable complex Hodge module $(M, J) \in \HMC{X}{w}$ is simple if and
only if the underlying complex perverse sheaf 
\[
	\ker \Bigl( \ratJC - i \cdot \id \, \colon 
		\ratM \tensor_{\RR} \CC \to \ratM \tensor_{\RR} \CC \Bigr)
\]
is simple. 
\item If $M \in \HM{X}{w}$, then every simple factor of the complex perverse sheaf
$\ratM \tensor_{\RR} \CC$ underlies a polarizable complex Hodge module.
\end{aenumerate}
\end{proposition}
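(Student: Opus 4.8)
The plan is to derive both statements from \propositionref{prop:complexification} and \propositionref{prop:semi-simple}, the one extra ingredient being an elementary count of generic ranks for the $\pm i$-eigenspaces of $J$. I would first record the following refinement of \propositionref{prop:complexification}, valid for a simple $M_0 \in \HM{X}{w}$ on a compact K\"ahler manifold: the complex perverse sheaf underlying $M_0$ is semi-simple with at most two simple constituents, and if $\End(M_0) \neq \RR$ then for \emph{every} $J \in \End(M_0)$ with $J^2 = -\id$ the eigenspaces $\ker(\ratJC - i \cdot \id)$ and $\ker(\ratJC + i \cdot \id)$ coincide with those two constituents. The dichotomy itself is \propositionref{prop:complexification}; its first alternative is impossible when $\End(M_0) \neq \RR$, since any $J \in \End(M_0)$ with $J^2 = -\id$ induces on the complex perverse sheaf an operator commuting with the real structure, which therefore cannot be a scalar and so has $t^2+1$ as minimal polynomial and two nonzero eigenspaces. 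The new point is that an \emph{arbitrary} such $J$ behaves like the one furnished by \propositionref{prop:complexification}: its induced operator $\ratJC$ is diagonalizable with eigenvalues $\pm i$; the conjugate-linear real structure commutes with $\ratJC$ and interchanges $\ker(\ratJC - i \cdot \id)$ with $\ker(\ratJC + i \cdot \id)$, so these have equal generic rank, namely half that of the underlying complex perverse sheaf; and since a subobject of a semi-simple perverse sheaf is again semi-simple, each eigenspace is a direct sum of the available simple constituents, which by the rank count forces it to equal exactly one of the (at most two) constituents.

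Granting this, part~(a) is short. For the implication ``$(M, J)$ simple $\Rightarrow$ its underlying complex perverse sheaf is simple'', I would pass to the normal forms of \propositionref{prop:semi-simple}. For a simple object of the form $(M_0 \oplus M_0, J_{M_0})$ with $\End(M_0) = \RR$, a direct computation identifies the underlying complex perverse sheaf with the one underlying $M_0$, which is simple since $\End(M_0) = \RR$ excludes the second alternative of \propositionref{prop:complexification}. For a simple object of the form $(M_0, J)$ with $\End(M_0) \in \{\CC, \HH\}$, the refinement above applies and shows that the underlying complex perverse sheaf $\ker(\ratJC - i \cdot \id)$ is one of the two simple constituents of the complex perverse sheaf of $M_0$. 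For the converse, I would use semi-simplicity of $\HMC{X}{w}$ to write $(M, J)$ as a finite direct sum of simple complex Hodge modules; by the implication just proved each summand contributes a nonzero simple summand to the underlying complex perverse sheaf of $(M, J)$ (nonzero because its generic rank is half that of the summand), so if that perverse sheaf is simple there is exactly one summand and $(M, J)$ is simple.

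For part~(b), semi-simplicity of $\HM{X}{w}$ reduces the assertion to a simple $M$, whose endomorphism algebra $\End(M)$ is $\RR$, $\CC$, or $\HH$ by Schur's lemma and the classification of real division algebras. If $\End(M) = \RR$ then $\ratM \tensor_\RR \CC$ is simple and is the complex perverse sheaf underlying $(M \oplus M, J_M) \in \HMC{X}{w}$. If $\End(M) = \CC$, the refinement above gives $\ratM \tensor_\RR \CC = P_1 \oplus P_2$ with $P_1, P_2$ simple, and for $J \in \End(M)$ with $J^2 = -\id$ these are respectively the complex perverse sheaves underlying $(M, J)$ and $(M, -J)$. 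If $\End(M) = \HH$ then $\ratM \tensor_\RR \CC$ is a sum of two copies of one simple perverse sheaf $P$, which underlies $(M, J)$ for any $J \in \End(M)$ with $J^2 = -\id$. In each case every simple constituent of $\ratM \tensor_\RR \CC$ is realized.

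I expect the rank count inside the refinement to be the real work: one must rule out that $\ker(\ratJC - i \cdot \id)$ is zero or all of the underlying complex perverse sheaf, which is exactly where one uses that the real structure is conjugate-linear, commutes with $\ratJC$, and hence makes the two eigenspaces nonzero of equal generic rank, together with the fact that, although complex perverse sheaves do not form a semi-simple category, a subobject of a semi-simple one is still semi-simple. The rest is bookkeeping with \propositionref{prop:semi-simple}, \propositionref{prop:complexification}, and the linear algebra of the operator $J$.
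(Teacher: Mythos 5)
Your proposal is correct, and it takes the same basic approach as the paper, whose proof of this proposition is literally the single sentence ``This is a restatement of Proposition~\ref{prop:complexification}.'' What you do differently is to make explicit a point the paper treats as immediate: part (a) concerns an \emph{arbitrary} $J \in \End(M)$ with $J^2 = -\id$, whereas \propositionref{prop:complexification} only asserts the \emph{existence} of some $J$ for which the two eigenspaces are the simple constituents of $\ratM \tensor_{\RR} \CC$. Your ``refinement'' -- that when $\End(M_0) \neq \RR$ every such $J$ has nonzero $\pm i$-eigenspaces of equal generic rank, which a rank count then pins down as exactly the two constituents -- correctly closes this gap, and the mechanism you use (the conjugate-linear real structure commutes with $\ratJC$ and therefore swaps the two eigenspaces; a subobject of a semi-simple perverse sheaf is semi-simple) is sound. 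This refinement is genuinely needed in the case $\End(M_0) = \HH$, where there are infinitely many choices of $J$, and it is the only substantive addition beyond routine bookkeeping with \propositionref{prop:semi-simple}; the rest of your argument (reduction to normal forms for the forward direction of (a), semi-simplicity of $\HMC{X}{w}$ for the converse, and the case analysis on $\End(M)$ for (b)) matches what the paper leaves implicit.
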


\begin{proof}
This is a restatement of \propositionref{prop:complexification}.
\end{proof}

\subsection{Complex variations of Hodge structure}

In this section, we discuss the relation between polarizable complex Hodge modules
and polarizable complex variations of Hodge structure. 

\begin{definition}
A \define{polarizable complex variation of Hodge structure} is a pair $(\varH, J)$,
where $\varH$ is a polarizable real variation of Hodge structure, and $J \in
\End(\varH)$ is an endomorphism with $J^2 = -\id$.
\end{definition}

As before, the \define{complexification} of a real variation $\varH$ is defined as
\[
	\bigl( \varH \oplus \varH, J_{\varH} \bigr), \quad
		J_{\varH}(h_1, h_2) = (-h_2, h_1),
\]
and a complex variation $(\varH, J)$ is real if and only if there is an endomorphism
$r \in \End(\varH)$ with $r \circ J = - J \circ r$ and $r^2 = \id$. Note that the
direct sum of $(\varH, J)$ with its \define{complex conjugate} $(\varH, -J)$ has an
obvious real structure.

The definition above is convenient for our purposes; it is also not hard to show that
it is equivalent to the one in \cite[\S1]{Deligne}, up to the choice of weight.
(Deligne only considers complex variations of weight zero.)

\begin{example} \label{ex:unitary}
Let $\rho \in \Char(X)$ be a unitary character of the fundamental group, and denote
by $\CCrho$ the resulting unitary local system. It determines a polarizable complex
variation of Hodge structure in the following manner. The underlying real local
system is $\RR^2$, with monodromy acting by
\[
	\begin{pmatrix}
		\Re \rho & - \Im \rho \\
		\Im \rho & \Re \rho
	\end{pmatrix};
\]
the standard inner product on $\RR^2$ makes this into a polarizable real variation of
Hodge structure $\varH_{\rho}$ of weight zero, with $J_{\rho} \in \End(\varH_{\rho})$
acting as $J_{\rho}(x,y) = (-y,x)$; for simplicity, we continue to denote the pair
$\bigl( \varH_{\rho}, J_{\rho} \bigr)$ by the symbol $\CCrho$.
\end{example}

We have the following criterion for deciding whether a polarizable complex Hodge
module is \define{smooth}, meaning induced by a complex variation of Hodge structure.

\begin{lemma} \label{lem:CHM-smooth}
Given $(M, J) \in \HMC{X}{w}$, let us denote by
\[
	\Mmod = \Mmod' \oplus \Mmod'' = \ker(J - i \cdot \id) \oplus \ker(J + i \cdot \id)
\]
the induced decomposition of the regular holonomic $\Dmod$-module underlying $M$.
If $\Mmod'$ is coherent as an $\OX$-module, then $M$ is smooth.
\end{lemma}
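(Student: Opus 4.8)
The plan is to reduce the statement to the analogous fact for polarizable real Hodge modules, and then invoke Saito's structure theorem. Here is the strategy in detail.

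\smallskip

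\textbf{Step 1: Reduce to the real case.} The hypothesis is that $\Mmod'$ is $\OX$-coherent, i.e.\ it is a vector bundle with flat connection rather than a genuine $\Dmod$-module with singularities. Since $\Mmod$ underlies the polarizable real Hodge module $M$, and $\Mmod = \Mmod' \oplus \Mmod''$ with $\Mmod''$ isomorphic to $\Mmod'$ as a real perverse sheaf (only the $\CC$-action differs, as noted in \parref{par:CHM}), coherence of $\Mmod'$ as an $\OX$-module forces $\Mmod''$ to be $\OX$-coherent as well, hence $\Mmod$ itself is $\OX$-coherent. Therefore it suffices to show: if $M \in \HM{X}{w}$ is a polarizable real Hodge module whose underlying regular holonomic $\Dmod$-module $\Mmod$ is coherent over $\OX$, then $M$ is a polarizable real variation of Hodge structure (placed in degree $\dim X$); because then $(M, J)$ is smooth by definition of smoothness for complex Hodge modules, the endomorphism $J$ being automatically an endomorphism of the underlying real variation.

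\smallskip

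\textbf{Step 2: Identify the support.} First I would argue that $\Supp M = X$. Indeed, the perverse sheaf $\ratM \tensor_\RR \CC \simeq \DR(\Mmod)$ has the same support as $\Mmod$; if $\Mmod$ is $\OX$-coherent and nonzero, then $\DR(\Mmod)$ is (up to shift) the complex of sheaves $\varHC \decal{\dim X}$ attached to a flat bundle $\varHC = \ker \nabla$, whose support is all of $X$ (or, componentwise, each connected component). So, decomposing $M$ by strict support as $M \simeq \bigoplus M_j$ with strict support $Z_j$, $\OX$-coherence of $\Mmod = \bigoplus \Mmod_j$ forces every $\Mmod_j$ to be $\OX$-coherent, and the support argument shows each $Z_j = X$. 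Since there are no nontrivial morphisms between Hodge modules with different strict support and the only strict support occurring is $X$ itself, $M$ has strict support $X$.

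\smallskip

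\textbf{Step 3: Apply Saito's structure theorem.} Now $M \in \HM{X}{w}$ has strict support $X$, so by \theoremref{thm:Saito-structure} it corresponds to a generically defined polarizable real variation of Hodge structure $\varH$ of weight $w - \dim X$ on a dense Zariski-open subset $U \subseteq X$, and $\ratM$ is the intersection complex of $\varHR$. The remaining point is that this variation actually extends over all of $X$ as a variation (not merely that its IC extension is $M$). This is where $\OX$-coherence of $\Mmod$ is used decisively: the filtered $\Dmod$-module $(\Mmod, F_\bullet \Mmod)$ underlying $M$ restricts on $U$ to the flat bundle with its Hodge filtration coming from $\varH$; coherence of $\Mmod$ over $\OX$ globally means this flat bundle extends to a flat bundle on $X$ (the connection has no poles), and then the compatibility statements in Saito's construction — specifically that $\gr^F_k \Mmod$ is $\OX$-coherent and the whole package is locally free where $\Mmod$ is $\OX$-coherent — identify $(\Mmod, F_\bullet \Mmod, \ratM)$ on all of $X$ with the Hodge module attached to a polarizable real variation of Hodge structure on $X$ extending $\varH$. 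Concretely: a pure Hodge module with strict support $X$ and $\OX$-coherent underlying $\Dmod$-module is, by the very recursion defining $\HM{X}{w}$, smooth — this is essentially the base case of Saito's definition — so $M$ comes from a variation on $X$.

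\smallskip

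\textbf{Main obstacle.} The genuine content is Step 3, i.e.\ knowing that within Saito's category, being $\OX$-coherent as a $\Dmod$-module is \emph{equivalent} to being smooth (a variation of Hodge structure). The nontrivial direction — coherence implies the nearby-cycle and polarization conditions degenerate to the classical conditions on a VHS — is built into Saito's formalism, so in practice I would cite the relevant statement from \cite{Saito-HM} (the characterization of smooth Hodge modules) rather than reprove it. The rest is the bookkeeping of Steps 1–2 reducing the complex case to the real one and pinning down the support.
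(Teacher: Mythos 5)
Your proof is correct and in essence the same as the paper's: both reduce to Saito's characterization of smooth Hodge modules via the observation that $\OX$-coherence of $\Mmod'$ forces local-system/coherence conditions on the whole object. The paper's route is slightly more direct and careful — it notes that $\ker(\ratJC - i\cdot\id)$, the perverse sheaf attached to $\Mmod'$, is isomorphic to $\ratM$ itself \emph{as a real perverse sheaf}, so once $\ker(\ratJC - i\cdot\id)$ is a shifted local system the same is true of $\ratM$, and then it cites \cite[Lemme~5.1.10]{Saito-HM} precisely. Your Step~1 phrasing has a small slip — it is the perverse sheaves $\ker(\ratJC \mp i\cdot\id)$, not the $\Dmod$-modules $\Mmod'$, $\Mmod''$, that are isomorphic as \emph{real} perverse sheaves — but the conclusion you draw (coherence of $\Mmod''$, hence of $\Mmod$) is right; your Step~2 on support is correct but not actually needed once one passes to the perverse-sheaf criterion; and your Step~3 amounts to invoking the same Saito lemma, so the argument goes through.
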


\begin{proof}
Let $\ratMC = \ker(\ratJC - i \cdot \id) \oplus \ker(\ratJC + i \cdot \id)$ be the
analogous decomposition of the underlying perverse sheaf. Since $\Mmod'$ is
$\OX$-coherent, it is a vector bundle with flat connection; by the Riemann-Hilbert
correspondence, the first factor is therefore (up to a shift in degree by $\dim X$) a
complex local system. Since it is isomorphic to $\ratM$ as a real perverse sheaf, it
follows that $\ratM$ is also a local system; but then $M$ is smooth by
\cite[Lemme~5.1.10]{Saito-HM}.
\end{proof}

In general, the relationship between complex Hodge modules and complex variations of
Hodge structure is governed by the following theorem; it is of course an immediate
consequence of Saito's results (see \theoremref{thm:Saito-structure}).

\begin{theorem} \label{thm:structure-complex}
The category of polarizable complex Hodge modules of weight $w$ with strict support
$Z \subseteq X$ is equivalent to the category of generically defined polarizable
complex variations of Hodge structure of weight $w - \dim Z$ on $Z$.
\end{theorem}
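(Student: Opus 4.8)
The plan is to deduce the statement directly from Saito's equivalence in \theoremref{thm:Saito-structure} by carrying the endomorphism $J$ along. On objects, I would send a polarizable complex Hodge module $(M, J)$ with strict support $Z$ --- so that $M \in \HM{X}{w}$ has strict support $Z$ and $J \in \End(M)$ satisfies $J^2 = -\id$ --- to the pair $(\varH, J)$, where $\varH$ is the generically defined polarizable real variation of Hodge structure of weight $w - \dim Z$ on $Z$ attached to $M$ by \theoremref{thm:Saito-structure}, and $J \in \End(\varH)$ is the endomorphism corresponding to $J \in \End(M)$ under that equivalence. In the other direction, given a polarizable complex variation $(\varH, J)$ of weight $w - \dim Z$ (defined on a dense Zariski-open subset of the smooth locus of $Z$), let $M \in \HM{X}{w}$ be the real Hodge module with strict support $Z$ extending $\varH$ and let $J \in \End(M)$ be the endomorphism extending $J \in \End(\varH)$; then $(M, J) \in \HMC{X}{w}$ has strict support $Z$.

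First I would check well-definedness. A polarizable complex Hodge module with strict support $Z$ is, by definition, nothing more than such a pair $(M, J)$; since there are no nonzero morphisms between polarizable real Hodge modules with distinct strict supports, no further compatibility of $J$ with a strict-support decomposition has to be imposed. Because \theoremref{thm:Saito-structure} is an equivalence of $\RR$-linear categories, it induces an isomorphism of $\RR$-algebras $\End(M) \cong \End(\varH)$ compatible with composition; an element $J$ of the left-hand side with $J^2 = -\id$ therefore corresponds to an element of the right-hand side with square $-\id$, i.e.\ to the datum of a polarizable complex variation $(\varH, J)$ in the sense of the definition above. The converse assignment is well defined by the same argument, using essential surjectivity in \theoremref{thm:Saito-structure}.

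It then remains to see that the two functors are mutually quasi-inverse and fully faithful, and this is now formal. On underlying real objects the two composites are naturally isomorphic to the identity by \theoremref{thm:Saito-structure}, and these natural isomorphisms automatically commute with the operators $J$ because they are morphisms of real Hodge modules (respectively of real variations). For full faithfulness one uses that, by definition,
\[
	\Hom\bigl( (M_1, J_1), (M_2, J_2) \bigr)
		= \menge{f \in \Hom(M_1, M_2)}{f \circ J_1 = J_2 \circ f},
\]
and that the composition-compatible isomorphism $\Hom(M_1, M_2) \cong \Hom(\varH_1, \varH_2)$ of \theoremref{thm:Saito-structure} carries this subspace onto the space of morphisms of complex variations $(\varH_1, J_1) \to (\varH_2, J_2)$. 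There is no analytic obstacle here; the only thing to keep in mind is that Saito's correspondence must be used as an equivalence of additive categories --- so that it transports both the relation $J^2 = -\id$ and the morphism condition $f \circ J_1 = J_2 \circ f$ --- and that on both sides the word ``polarizable'' refers to the underlying real object, so it crosses the equivalence for free.
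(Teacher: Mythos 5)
Your proposal is correct and takes exactly the route the paper intends: the paper states \theoremref{thm:structure-complex} without a proof, calling it ``an immediate consequence of Saito's results (see \theoremref{thm:Saito-structure}),'' and what you have written is precisely the routine verification that the endomorphism $J$ transports across Saito's $\RR$-linear equivalence. You have simply made explicit the details the paper leaves implicit.
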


\subsection{Integral structures on Hodge modules}

By working with polarizable real (or complex) Hodge modules, we lose certain
arithmetic information about the monodromy of the underlying perverse sheaves, such
as the fact that the monodromy eigenvalues are roots of unity. One can recover some of
this information by asking for the existence of an ``integral structure''
\cite[Definition~1.9]{Schnell-laz}, which is just a constructible complex of sheaves
of $\ZZ$-modules that becomes isomorphic to the perverse sheaf underlying the Hodge
module after tensoring by $\RR$.

\begin{definition}
An \define{integral structure} on a polarizable real Hodge module $M \in \HM{X}{w}$
is a constructible complex $E \in \Dbc(\ZZ_X)$ such that $\ratM \simeq E \tensor_{\ZZ} \RR$.
\end{definition}

As explained in \cite[\S1.2.2]{Schnell-laz}, the existence of an integral structure
is preserved by many of the standard operations on (mixed) Hodge modules, such as
direct and inverse images or duality. Note that even though it makes sense to ask
whether a given (mixed) Hodge module admits an integral structure, there appears to
be no good functorial theory of ``polarizable integral Hodge modules''.

\begin{lemma} \label{lem:integral-summand}
If $M \in \HM{X}{w}$ admits an integral structure, then the same is true for every
summand in the decomposition of $M$ by strict support.
\end{lemma}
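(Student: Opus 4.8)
The plan is to extract each summand $M_j$ by restricting the integral structure $E$ to a generic point of its strict support $Z_j$, where the contributions of the other summands disappear after a single perverse truncation, and then to intermediate‑extend back to $X$. Write $M \simeq \bigoplus_{j=1}^{n} M_j$ for the decomposition by strict support, with $M_j$ of strict support $Z_j$, and let $P_j$ denote the perverse sheaf underlying $M_j$, so that $\ratM \simeq \bigoplus_{j} P_j$ with $\Supp P_j = Z_j$. Fix $j$. By Saito's \theoremref{thm:Saito-structure} the perverse sheaf $P_j$ is the intersection complex of a local system defined on a dense Zariski‑open subset of the smooth locus of $Z_j$; I would choose a dense Zariski‑open $V \into (Z_j)_{\mathrm{sm}}$ contained in that locus and small enough that $V \cap Z_k = \emptyset$ for every $k \neq j$ with $Z_j \not\subseteq Z_k$ (there are finitely many strata, and each such $Z_k$ meets $Z_j$ in a proper closed subset). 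Let $a \colon V \into X$ be the resulting locally closed embedding.

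The first step is to check that
\[
	{}^{p}\!H^{0}\bigl( a^{\ast} \ratM \bigr) \simeq P_j \restr{V}.
\]
Indeed $a^{\ast} \ratM \simeq \bigoplus_{k} a^{\ast} P_k$; the summands with $k \neq j$ and $Z_j \not\subseteq Z_k$ vanish by the choice of $V$; the summand $a^{\ast} P_j$ is a shifted local system, hence perverse; and for each $k$ with $Z_k \supsetneq Z_j$ the complex $a^{\ast} P_k$ is the restriction of the intersection complex of $Z_k$ to the smooth subvariety $V$, which has strictly smaller dimension, so by the support condition $a^{\ast} P_k$ lives in cohomological degrees $\leq -\dim V - 1$, i.e.\ in perverse degrees $\leq -1$, and is killed by ${}^{p}\!H^{0}$. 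Since $-\tensor_{\ZZ}\RR$ is exact and commutes with $a^{\ast}$, the complex $a^{\ast} E \in \Dbc(\ZZ_V)$ satisfies $a^{\ast} E \tensor_{\ZZ}\RR \simeq a^{\ast}\ratM$; and since $-\tensor_{\ZZ}\RR$ is t‑exact for the perverse t‑structures (flatness of $\RR$ over $\ZZ$), the perverse sheaf of $\ZZ$‑modules $E_j^{\circ} := {}^{p}\!H^{0}(a^{\ast} E)$ on $V$ satisfies $E_j^{\circ} \tensor_{\ZZ}\RR \simeq P_j \restr{V}$. Finally $P_j$ is the intermediate extension $a_{!\ast}\bigl( P_j \restr{V} \bigr)$ along $a$, so I would set $E_j := a_{!\ast}(E_j^{\circ})$, where $a_{!\ast}$ — the composite of $\derR \jl$, $\jls$, perverse truncations and an image, which all make sense over $\ZZ$ — produces an object of $\Dbc(\ZZ_X)$ with $E_j \tensor_{\ZZ}\RR \simeq a_{!\ast}\bigl( P_j \restr{V} \bigr) = P_j$; this exhibits $E_j$ as an integral structure on $M_j$.

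The one genuinely delicate point is the compatibility of intermediate extension with the coefficient change $-\tensor_{\ZZ}\RR$: the only nonformal ingredient is that $\derR \jl$ (for the open immersion $V \into Z_j$) commutes with $-\tensor_{\ZZ}\RR$ on constructible complexes, which rests on the finiteness of constructible cohomology together with the flatness of $\RR$ over $\ZZ$; I would invoke here that intermediate and direct/inverse images preserve the existence of integral structures, as recorded in \cite[\S1.2.2]{Schnell-laz}. Everything else is formal once the support‑condition computation of ${}^{p}\!H^{0}(a^{\ast}\ratM)$ is in place. I should note that this argument handles all summands simultaneously and needs no induction on the number of strata or on $\dim \Supp M$, because the ${}^{p}\!H^{0}$ truncation automatically suppresses exactly the summands $P_k$ whose strict support strictly contains $Z_j$.
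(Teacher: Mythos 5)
Your proof is correct, but it is noticeably more elaborate than the paper's, which sidesteps perverse truncation over $\ZZ$ entirely. The paper invokes \cite[Proposition~2.1.17]{BBD}: in the decomposition $\ratM \simeq \bigoplus_j P_j$ by strict support, $Z_j$ is an irreducible component of $\Supp \cohH^{-d_j}(\ratM)$, and the local system $\varH_{j,\RR}$ is the restriction of the \emph{ordinary} cohomology sheaf $\cohH^{-d_j}(\ratM)$ to a dense open in $Z_j$. Since ordinary cohomology sheaves commute with $-\tensor_\ZZ \RR$ for free (flatness alone suffices), $\cohH^{-d_j}(E)$ restricted to that open gives a $\ZZ$-form of $\varH_j$, and then $M_j$ inherits an integral structure because its perverse sheaf is the intermediate extension of $\varH_{j,\RR}\decal{d_j}$. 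Your route goes through ${}^{p}H^0$ of the locally closed restriction $a^*E$ and then $a_{!\ast}$ over $\ZZ$; the conclusion is the same, but you are relying on the t-exactness of $-\tensor_\ZZ\RR$ for the perverse t-structure, which requires not just flatness but also compatibility of Verdier duality with coefficient extension (so that the cosupport half of the perversity condition is preserved) -- worth stating explicitly rather than attributing to flatness alone. One small slip: for $Z_k \supsetneq Z_j$ it is not literally true that $a^*P_k$ lies in cohomological degrees $\leq -\dim V - 1$; the cohomology sheaves $\cohH^{i}(IC_{Z_k})$ for $i > -d_k$ can restrict nontrivially to $V$, only with support of dimension $< -i$. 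What is true is that $a^*P_k \in {}^{p}D^{\leq -1}(V)$, which is all you need for ${}^{p}H^0(a^*P_k)=0$; alternatively, shrink $V$ further to kill the restrictions of $\cohH^{i}(IC_{Z_k})$ for $i > -d_k$, after which your degree claim becomes literally correct. In short: same endpoint, but the paper's use of ordinary cohomology sheaves is cleaner, whereas your argument pays for bypassing BBD~2.1.17 by needing the $\ZZ$-coefficient perverse and intermediate-extension machinery.
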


\begin{proof}
Consider the decomposition 
\[
	M = \bigoplus_{j=1}^n M_j
\]
by strict support, with $Z_1, \dotsc, Z_n \subseteq X$ distinct irreducible analytic
subvarieties. Each $M_j$ is a polarizable real Hodge module with strict support
$Z_j$, and therefore comes from a polarizable real variation of Hodge structure
$\varH_j$ on a dense Zariski-open subset of $Z_j$. What we have to prove is that each
$\varH_j$ can be defined over $\ZZ$.  Let $\ratM$ denote the underlying real perverse
sheaf, and set $d_j = \dim Z_j$. According to \cite[Proposition~2.1.17]{BBD}, $Z_j$
is an irreducible component in the support of the $(-d_j)$-th cohomology sheaf of
$\ratM$, and $\varH_{j, \RR}$ is the restriction of that constructible sheaf to a
Zariski-open subset of $Z_j$.  Since $\ratM \simeq E \tensor_{\ZZ} \RR$, it follows
that $\varH_j$ is defined over the integers. 
\end{proof}

\subsection{Operations on Hodge modules}
\label{par:operations}

In this section, we recall three useful operations for polarizable real (and
complex) Hodge modules. If $\Supp M$ is compact, we define the \define{Euler
characteristic} of $M = (\Mmod, F_{\bullet} \Mmod, \ratM) \in \HM{X}{w}$ by the formula
\[
	\chi(X, M) = \sum_{i \in \ZZ} (-1)^i \dim_{\RR} H^i(X, \ratM)
		= \sum_{i \in \ZZ} (-1)^i \dim_{\CC} H^i \bigl( X, \DR(\Mmod) \bigr).
\]
For $(M, J) \in \HMC{X}{w}$, we let $\Mmod = \Mmod' \oplus \Mmod'' = \ker(J - i \cdot
\id) \oplus \ker(J + i \cdot \id)$ be the decomposition into eigenspaces, and define
\[
	\chi(X, M, J) 
		= \sum_{i \in \ZZ} (-1)^i \dim_{\CC} H^i \bigl( X, \DR(\Mmod') \bigr).
\]
With this definition, one has $\chi(X, M) = \chi(X, M, J) + \chi(X, M, -J)$.

Given a smooth morphism $f \colon Y \to X$ of relative dimension $\dim f = \dim Y -
\dim X$, we define the \define{naive inverse image}
\[
	f^{-1} M = \bigl( \fu \Mmod, \fu F_{\bullet} \Mmod, f^{-1} \ratM \bigr).
\]
One can show that $f^{-1} M \in \HM{Y}{w + \dim f}$; see \cite[\S9]{Schnell-van} for
more details. The same is true for polarizable complex Hodge modules: if
$(M, J) \in \HMC{X}{w}$, then one obviously has 
\[
	f^{-1}(M, J) = \bigl( f^{-1} M, f^{-1} J \bigr) \in \HMC{Y}{w + \dim f}.
\]
One can also twist a polarizable complex Hodge module by a unitary character.

\begin{lemma} \label{lem:twist}
For any unitary character $\rho \in \Char(X)$, there is an object
\[
	(M, J) \tensor_{\CC} \CCrho \in \HMC{X}{w}
\]
whose associated complex perverse sheaf is $\ker(\ratJC - i \cdot \id) \tensor_{\CC}
\CCrho$.
\end{lemma}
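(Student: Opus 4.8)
The plan is to realize the twist directly using the equivalence between complex Hodge modules with fixed strict support and complex variations of Hodge structure. First I would reduce to the case where $(M,J)$ has strict support equal to an irreducible analytic subvariety $Z \subseteq X$, since both the twist operation and the assertion about the underlying perverse sheaf are compatible with the (finite, when $X$ is compact; locally finite in general) decomposition by strict support. On such a summand, \theoremref{thm:structure-complex} identifies $(M,J)$ with a generically defined polarizable complex variation of Hodge structure $(\varH, J_\varH)$ of weight $w - \dim Z$ on $Z$; explicitly, $\varH$ lives on a dense Zariski-open $U \subseteq Z^{\mathrm{sm}}$, and $\ratM$ is the intersection complex of the underlying real local system $\varHR$.

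Next I would construct the twisted variation. The unitary character $\rho \in \Char(X)$ restricts to a unitary character of $\pi_1(U)$, and by \exampleref{ex:unitary} it gives a polarizable complex variation of Hodge structure $\CCrho = (\varH_\rho, J_\rho)$ of weight zero on $U$ (indeed on all of $X$). The tensor product $(\varH \tensor_\RR \varH_\rho, J \tensor J_\rho)$ — taking the real tensor product of underlying real variations, equipped with the endomorphism induced by $J$ acting on the first factor — is again a polarizable real variation of Hodge structure with an endomorphism squaring to $-\id$, hence a polarizable complex variation of Hodge structure on $U$; its associated complex local system (the $+i$-eigenspace of the induced $J$) is $\ker(\ratJC - i\cdot\id)|_U \tensor_\CC \CCrho$. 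Applying \theoremref{thm:structure-complex} in the reverse direction produces a unique $(M,J)\tensor_\CC \CCrho \in \HMC{X}{w}$ with strict support $Z$ extending this variation, and by construction its underlying complex perverse sheaf is the intersection complex of $\ker(\ratJC - i\cdot\id)|_U \tensor_\CC \CCrho$. Since tensoring a local system by the rank-one local system $\CCrho$ (which is defined on all of $X$, not just $U$) commutes with intermediate extension — $IC_Z(\varL \tensor \CCrho|_U) \simeq IC_Z(\varL) \tensor \CCrho$, as the two agree on $U$ and $\CCrho$ is a line bundle with flat connection, so tensoring is an exact functor preserving the support and no-sub/quotient conditions — we conclude that the underlying complex perverse sheaf of $(M,J)\tensor_\CC\CCrho$ is $\ker(\ratJC - i\cdot\id)\tensor_\CC\CCrho$, as required.

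The main obstacle is verifying that the tensor product $(\varH \tensor_\RR \varH_\rho, J\tensor J_\rho)$ is genuinely a \emph{polarizable} complex variation of Hodge structure with the correct Hodge filtration and weight: one must check that the Hodge filtration on the tensor product variation induces, on the $+i$-eigenspace, the filtration matching $\ker(\ratJC - i\cdot\id)\tensor\CCrho$ with its natural Hodge structure, and that the product of the given polarization on $\varH$ with the standard polarization on $\varH_\rho$ (of weight zero) is a polarization compatible with $J\tensor J_\rho$. This is essentially the standard fact that the tensor product of polarizable variations is polarizable, combined with bookkeeping of the $\pm i$-eigenspace decompositions; the only subtlety is the interaction of the two endomorphisms $J$ and $J_\rho$, but since they act on different tensor factors they commute, and the $+i$-eigenspace of $J\tensor J_\rho$ is precisely $(\text{$+i$-eigenspace of }J)\tensor_\CC(\text{$+i$-eigenspace of }J_\rho)$ over $\CC$, which unwinds to the desired formula. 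Once the smooth case is settled, the passage to Hodge modules is automatic via \theoremref{thm:structure-complex} and the fact, already used repeatedly in \parref{par:CHM}, that these constructions are compatible with the decomposition by strict support.
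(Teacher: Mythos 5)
There is a genuine gap: the core algebraic step is carried out with a sign error, and as a result the construction you describe does not produce the object you claim.

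You propose equipping the full tensor product $\varH \tensor_{\RR} \varH_{\rho}$ with the endomorphism $J \tensor J_{\rho}$, and you assert both that this endomorphism squares to $-\id$ and that its $+i$-eigenspace is $\ker(\ratJC - i\cdot\id)\tensor_{\CC}\CCrho$. Neither is correct. Since $J^2 = -\id$ and $J_{\rho}^2 = -\id$, one has $(J\tensor J_{\rho})^2 = J^2\tensor J_{\rho}^2 = (-\id)\tensor(-\id) = +\id$, so $J\tensor J_{\rho}$ is an \emph{involution} whose eigenvalues are $\pm 1$; it has no $\pm i$-eigenspaces at all and is not a complex structure. (Your parenthetical gloss, ``the endomorphism induced by $J$ acting on the first factor,'' describes $J\tensor\id$, which does square to $-\id$ but whose $+i$-eigenspace on the complexification is $\ker(\ratJC - i\cdot\id)\tensor_{\CC}\varH_{\rho,\CC} = \bigl(\ker(\ratJC - i\cdot\id)\tensor\CCrho\bigr) \oplus \bigl(\ker(\ratJC - i\cdot\id)\tensor\CCrhob\bigr)$, which has twice the expected rank — also not what the lemma asks for.) So there is no single endomorphism of the whole tensor product that does the job.

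What is missing is the intermediate step that the paper's proof performs: because $(J\tensor J_{\rho})^2 = \id$, the kernel
\[
	N \;=\; \ker\bigl( J\tensor J_{\rho} + \id\bigr) \;\subseteq\; M \tensor_{\RR} \varH_{\rho}
\]
is a direct summand, hence again a polarizable real Hodge module of weight $w$; one then puts the complex structure $K = (J\tensor\id)\big|_N$ on $N$ (and on $N$ one checks $K^2 = -\id$ and that $K$ agrees with $(\id\tensor J_{\rho})\big|_N$). In terms of the eigenspace decompositions $\ratMC = V^{+}\oplus V^{-}$ and $\varH_{\rho,\CC} = W^{+}\oplus W^{-}$, the complexification $N_{\CC}$ is exactly $V^{+}\tensor W^{+} \oplus V^{-}\tensor W^{-}$, and the $+i$-eigenspace of $K$ is $V^{+}\tensor W^{+} = \ker(\ratJC - i\cdot\id)\tensor_{\CC}\CCrho$, as required. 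Your reduction to strict support and the equivalence with generically defined complex variations via \theoremref{thm:structure-complex} is a valid (if roundabout) framework, but it does not bypass this step: at the level of variations, the ``tensor product'' of two complex variations must likewise be defined by first cutting down to the $(-1)$-eigenspace of $J\tensor J_{\rho}$ and only then imposing the complex structure.
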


\begin{proof}
In the notation of \exampleref{ex:unitary}, consider the tensor product
\[
	M \tensor_{\RR} \varH_{\rho} \in \HM{X}{w};
\]
it is again a polarizable real Hodge module of weight $w$ because $\varH_{\rho}$ is a
polarizable real variation of Hodge structure of weight zero. The square of the
endomorphism $J \tensor J_{\rho}$ is the identity, and so
\[
	N = \ker \bigl( J \tensor J_{\rho} + \id \bigr) 
		\subseteq M \tensor_{\RR} \varH_{\rho}
\]
is again a polarizable real Hodge module of weight $w$. Now $K = J \tensor \id
\in \End(N)$ satisfies $K^2 = -\id$, which means that the pair $(N, K)$ is a
polarizable complex Hodge module of weight $w$. On the associated complex perverse
sheaf
\[
	\ker \bigl( K_{\CC} - i \cdot \id \bigr) 
		\subseteq \ratMC \tensor_{\CC} \varH_{\rho, \CC},
\]
both $\ratJC \tensor \id$ and $\id \tensor J_{\rho, \CC}$ act as multiplication by $i$,
which means that
\[
	\ker \bigl( K_{\CC} - i \cdot \id \bigr) 
		= \ker(\ratJC - i \cdot \id) \tensor_{\CC} \CCrho.
\]
The corresponding regular holonomic $\Dmod$-module is obviously
\[
	\Nmod' = \Mmod' \tensor_{\OX} (L, \nabla),
\]
with the filtration induced by $F_{\bullet} \Mmod'$; here $(L, \nabla)$ denotes the
flat bundle corresponding to the complex local system $\CCrho$, and $\Mmod = \Mmod'
\oplus \Mmod''$ as above. 
\end{proof}

\begin{note}
The proof shows that 
\begin{align*}
	N_{\CC} &= \bigl( \ker(\ratJC - i \cdot \id) \tensor_{\CC} \CCrho \bigr) 
		\oplus \bigl( \ker(\ratJC + i \cdot \id) \tensor_{\CC} \CCrhob \bigr) \\
	\Nmod &= \bigl( \Mmod' \tensor_{\OX} (L, \nabla) \bigr) 
		\oplus \bigl( \Mmod'' \tensor_{\OX} (L, \nabla)^{-1} \bigr),
\end{align*}
where $\rhob$ is the complex conjugate of the character $\rho \in \Char(X)$.
\end{note}

\section{Hodge modules on complex tori}

\subsection{Main result}

The paper \cite{PS} contains several results about Hodge modules of geometric origin
on abelian varieties. In this chapter, we generalize these results to arbitrary
polarizable complex Hodge modules on compact complex tori. To do so, we develop a
beautiful idea due to Wang \cite{Wang}, namely that up to direct sums and character
twists, every such object actually comes from an abelian variety.

\begin{theorem} \label{thm:CHM-main}
Let $(M, J) \in \HMC{T}{w}$ be a polarizable complex Hodge module on a compact
complex torus $T$. Then there is a decomposition
\begin{equation} \label{eq:decomposition}
	(M, J) \simeq \bigoplus_{j=1}^n 
		q_j^{-1} (N_j, J_j) \tensor_{\CC} \CC_{\rho_j}
\end{equation}
where $q_j \colon T \to T_j$ is a surjective morphism with connected fibers,
$\rho_j \in \Char(T)$ is a unitary character, and $(N_j, J_j) \in \HMC{T_j}{w -
\dim q_j}$ is a simple polarizable complex Hodge module with $\Supp N_j$ projective
and $\chi(T_j, N_j, J_j) > 0$.
\end{theorem}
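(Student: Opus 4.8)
The plan is to reduce to a simple object, analyze its strict support with Ueno's theorem, and then run an argument with variations of Hodge structure — in the spirit of Wang \cite{Wang} — to produce the character twist and a descent to a lower-dimensional torus; positivity of the Euler characteristic is dealt with only at the end. Since $\HMC{T}{w}$ is semisimple (Proposition~\ref{prop:semi-simple}), it suffices to treat a simple $(M,J)$ and to show that it has the form $q^{-1}(N,J_N) \tensor_\CC \CC_\rho$ with the stated properties, as reassembling the summands then reproduces \eqref{eq:decomposition}. A simple $(M,J)$ has strict support an irreducible $Z \subseteq T$ and, by Theorem~\ref{thm:structure-complex}, comes from a simple polarizable complex variation of Hodge structure on a dense Zariski-open $U \subseteq Z_{\mathrm{sm}}$ whose underlying complex local system $V$ is irreducible; the underlying complex perverse sheaf of $(M,J)$ is the intersection complex of $V$.

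Next I would bring in the geometry. Let $S \subseteq T$ be the identity component of the subtorus $\{t : t + Z = Z\}$, let $q \colon T \to \bar T = T/S$ be the quotient, and set $\bar Z = q(Z)$, so that $Z = q^{-1}(\bar Z)$; by Ueno's theorem \cite{Ueno}, $\bar Z$ is a projective variety of general type, and $q$ restricts to a holomorphic $S$-bundle $Z \to \bar Z$. Shrinking, we may assume $U = q^{-1}(\bar U)$ with $\bar U \subseteq \bar Z_{\mathrm{sm}}$ dense Zariski-open. Restricting $V$ to a fibre $S_b = q^{-1}(b)$, a translate of $S$, yields a polarizable complex variation of Hodge structure on a compact complex torus; since $\pi_1(S_b)$ is abelian, its monodromy is semisimple with abelian Zariski closure, hence diagonalizable, and each rank-one summand underlies a variation concentrated in a single bidegree, hence unitary. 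Thus $V|_{S_b}$ is a direct sum of unitary characters of $\pi_1(S_b)$.

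The heart of the argument — and the step I expect to be the main obstacle — is to show that as $b$ varies these characters do not really move, so that a single unitary character controls all the fibres. Write $\Lambda = \mathrm{im}(\pi_1(S_b) \to \pi_1(U))$; it is the kernel of $\pi_1(U) \to \pi_1(\bar U)$, hence a normal abelian subgroup, and the composite $\pi_1(S_b) \to \Lambda \to \pi_1(T)$ is injective onto the sublattice attached to $S$. By Clifford theory the characters of $\Lambda$ occurring in the irreducible module $V$ form a single orbit under conjugation by $\pi_1(U)$; but each of them extends to a character of the \emph{abelian} group $\pi_1(T)$, so this conjugation action is trivial, and therefore $V|_\Lambda \cong \rho_0^{\oplus k}$ for one unitary character $\rho_0$. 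Extending $\rho_0$ to a unitary $\rho \in \Char(T)$ and twisting by $\rho^{-1}$ as in Lemma~\ref{lem:twist} makes the local system underlying $(M,J) \tensor_\CC \CC_{\rho^{-1}}$ constant along the fibres of $q$, so its generic variation is pulled back from $\bar U$; by Theorem~\ref{thm:structure-complex}, applied on $\bar T$ and on $T$, this gives $(M,J) \simeq q^{-1}(N,J_N) \tensor_\CC \CC_\rho$ with $(N,J_N) \in \HMC{\bar T}{w - \dim q}$ simple and $\Supp N = \bar Z$ projective.

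It remains to arrange $\chi(\bar T, N, J_N) > 0$. Among all presentations $(M,J) \simeq q^{-1}(N,J_N) \tensor_\CC \CC_\rho$ of the above kind I would choose one with $\dim \bar T$ minimal. Since $\Supp N$ is projective it lies in a translate of an abelian subvariety $B \subseteq \bar T$ — the subtorus generated by $\bar Z$, which is a quotient of the Albanese of a desingularization of $\bar Z$ — so by Kashiwara's equivalence $N$ descends to a Hodge module on $B$ with the same Euler characteristic. If that Euler characteristic vanished, the results on simple holonomic $\Dmod$-modules on abelian varieties in \cite{Schnell-holo} would force the underlying $\Dmod$-module to be, up to translation, pulled back along a surjection onto a proper quotient of $B$; this translation-invariance propagates to $N$ on $\bar T$ and lets one replace $\bar T$ by a strictly smaller quotient torus, contradicting minimality. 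Hence $\chi(\bar T, N, J_N) > 0$, completing the plan.
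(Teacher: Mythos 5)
Your proposal follows essentially the same Wang-inspired route as the paper — reduce to a simple object, pass to the quotient by the Ueno torus, observe that the variation is a character twist of a pullback, and handle positivity of the Euler characteristic by descending on abelian varieties — but substitutes a different argument at one key step, and glosses over another.

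The genuinely different part is the ``character doesn't move'' step. The paper twists by the conjugate of one of the characters seen in a fibre, pushes the variation forward to $\bar U$ using Zucker's theorem (proper smooth map with compact K\"ahler fibres), and shows via adjunction that $(\varH, J)\otimes\CCrhob$ is the pullback of a simple summand of $\ql(\varH\otimes\CCrhob)$. You instead use Clifford theory on the extension $1 \to \Lambda \to \pi_1(U) \to \pi_1(\bar U) \to 1$, noting that all characters of $\Lambda \cong \pi_1(S)$ that appear extend to the abelian group $\pi_1(T)$ (because $\pi_1(S)$ is a primitive sublattice), so conjugation by $\pi_1(U)$ acts trivially and $V|_\Lambda$ is isotypic. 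This is a clean, slightly more elementary alternative, and I think it is correct.

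Two places, however, are asserted without justification and are actually where the paper expends real effort. First, ``Shrinking, we may assume $U = q^{-1}(\bar U)$'' is exactly the content of Proposition~\ref{prop:structure}(b) (every divisor on $Z$ is the preimage of a divisor on $\bar Z$) together with Schmid's extension theorem \cite[Proposition~4.1]{Schmid} for the codimension $\geq 2$ part. A priori, the generic-definition locus $U$ of the variation need not contain any whole fibre of $q$, and your Clifford argument cannot even start without this; the claim needs the same divisor analysis as the paper. Second, the passage from ``the underlying local system of $(\varH,J)\otimes\CC_{\rho^{-1}}$ is pulled back from $\bar U$'' to ``its generic variation is pulled back from $\bar U$'' requires an argument: one must know the Hodge filtration descends, not just the local system. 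The paper's invocation of Zucker's theorem is precisely what supplies this, so your Clifford route does not actually dispense with Zucker — it only moves the moment at which it is used. Once these two points are made explicit, the remainder (Kashiwara reduction to the abelian variety generated by $\Supp N$, descent via \cite[Corollary~5.2]{Schnell-holo} when $\chi = 0$, termination by dimension count stated as a minimality argument) matches the paper's proof.
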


For Hodge modules of geometric origin, a less precise result was proved by Wang
\cite{Wang}. His proof makes use of the decomposition theorem, which in the setting
of arbitrary compact K\"ahler manifolds, is only known for Hodge modules of geometric
origin \cite{Saito-Kae}. This technical issue can be circumvented by putting
everything in terms of generically defined variations of Hodge structure. 

To get a result for a polarizable real Hodge module $M \in \HM{T}{w}$, we simply
apply \theoremref{thm:CHM-main} to its complexification $(M \oplus M, J_M) \in
\HMC{T}{w}$. One could say more about the terms in the decomposition below, but the
following version is enough for our purposes.

\begin{corollary} \label{cor:CHM-real}
Let $M \in \HM{T}{w}$ be a polarizable real Hodge module on a compact complex torus
$T$. Then in the notation of \theoremref{thm:CHM-main}, one has
\[
	(M \oplus M, J_M) 
		\simeq \bigoplus_{j=1}^n q_j^{-1}(N_j, J_j) \tensor_{\CC} \CC_{\rho_j}.
\]
If $M$ admits an integral structure, then each $\rho_j \in \Char(T)$ has finite
order.
\end{corollary}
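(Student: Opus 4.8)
The first assertion is immediate: apply \theoremref{thm:CHM-main} to the complexification $(M \oplus M, J_M) \in \HMC{T}{w}$, whose underlying complex perverse sheaf is $\ratM \tensor_\RR \CC$. The content is entirely in the finite-order statement, and I would begin by reducing to the case that $M$ is simple: by \lemmaref{lem:integral-summand} each summand of $M$ in its decomposition by strict support again admits an integral structure, and it is enough to treat these one at a time. For $M$ simple, \propositionref{prop:complexification} (together with the description of complex conjugation for the pulled-back summands, which commutes with $q^{-1}$ and replaces $\rho$ by $\rhob$) shows that $(M \oplus M, J_M)$ decomposes into at most two simple polarizable complex Hodge modules, and that in \eqref{eq:decomposition} one may arrange $q_1 = \dots = q_n = q$ for a single surjective map $q \colon T \to T_1$ with connected fibers. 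This reduction to a single contraction is the key structural point; without it the Kronecker argument below breaks down.

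Write $S_j = q^{-1}(N_j') \tensor_\CC \CC_{\rho_j}$ for the complex local system underlying the $j$-th summand, so $\ratM \tensor_\RR \CC \simeq \bigoplus_{j} S_j$; using the integral structure, fix $E \in \Dbc(\ZZ_T)$ with $\ratM \simeq E \tensor_\ZZ \RR$, so that $\bigoplus_j S_j \simeq E \tensor_\ZZ \CC$. Now restrict to a general fiber $F = q^{-1}(t_1)$, a subtorus of $T$ with $\pi_1(F) = \ker\bigl(q_\ast \colon \pi_1(T) \to \pi_1(T_1)\bigr)$: if $t_1$ is a smooth point of $\Supp N_j$ then $q^{-1}(N_j')$ restricts over $F$ to a (shifted) trivial local system, so along a generic one-parameter loop $\gamma \in \pi_1(F)$ the monodromy of $S_j$ is the scalar $\rho_j(\gamma) \cdot \id$. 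Hence the monodromy of $\bigoplus_j S_j$ around $\gamma$ has eigenvalue multiset $\{\rho_1(\gamma), \dots, \rho_n(\gamma)\}$; but it is also conjugate to an automorphism of a finitely generated $\ZZ$-module coming from $E$, so its characteristic polynomial is monic with integer coefficients. Therefore each $\rho_j(\gamma)$ is an algebraic integer, and all of its Galois conjugates occur among the $\rho_{j'}(\gamma)$, hence have absolute value $1$ because every $\rho_{j'}$ is unitary. By Kronecker's theorem $\rho_j(\gamma)$ is a root of unity; as this holds for every $\gamma$ in the finitely generated group $\pi_1(F)$, the restriction of $\rho_j$ to $\pi_1(F)$ has finite order.

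It remains to upgrade this to finite order on all of $\pi_1(T)$. Since $\pi_1(F)$ is a direct summand of $\pi_1(T)$ (it is the fundamental group of a subtorus), the finite-order character $\rho_j\big\vert_{\pi_1(F)}$ extends to a finite-order character $\tilde\rho_j \in \Char(T)$; then $\rho_j \tilde\rho_j^{-1}$ is trivial on $\ker q_\ast$ and hence equals $q^\ast \tau_j$ for a unique unitary $\tau_j \in \Char(T_1)$, so that
\[
	q^{-1}(N_j, J_j) \tensor_\CC \CC_{\rho_j} \;\simeq\;
	q^{-1}\bigl( (N_j, J_j) \tensor_\CC \CC_{\tau_j} \bigr) \tensor_\CC \CC_{\tilde\rho_j}.
\]
Replacing $(N_j, J_j)$ by $(N_j, J_j) \tensor_\CC \CC_{\tau_j}$ is harmless: by \lemmaref{lem:twist} it is again a polarizable complex Hodge module of the same weight, still simple and with the same (projective) support, and its Euler characteristic is unchanged, since tensoring the underlying regular holonomic $\Dmod$-module by the flat line bundle attached to $\tau_j$ leaves $\DR$ locally isomorphic to what it was. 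This produces a decomposition as in \theoremref{thm:CHM-main} in which every character has finite order. The step I expect to require the most care is the passage to a general fiber: because $E$ and $\ratM$ are perverse sheaves on the possibly singular variety $\Supp M$ rather than honest local systems, one has to choose the fiber $F$ and the loop $\gamma$ generically enough to land in the smooth stratum and to see exactly the monodromy $\bigoplus_j \rho_j(\gamma)\cdot \id$.
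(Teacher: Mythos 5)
Your proof is correct and follows essentially the same route as the paper's: reduce via \lemmaref{lem:integral-summand} and Kashiwara's equivalence, restrict to a fiber of $q$ where the variation underlying $M$ becomes a direct sum of unitary rank-one local systems, and use the integral structure together with Kronecker's theorem to get roots of unity. The paper simply packages the last step as \lemmaref{lem:VHS-torus} and applies it directly inside the proof of \propositionref{prop:CHM-weak}, rather than re-deriving the Kronecker argument.

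Two small inaccuracies in your exposition, though neither is fatal. First, \lemmaref{lem:integral-summand} only reduces to the case of strict support, not to $M$ simple; the further simple factors of $M$ need not admit integral structures at all (that is exactly what the three cases $\End(M) = \RR, \CC, \HH$ reflect). But this does not matter for your argument, because the integer characteristic polynomial is that of the monodromy of $E$ on the fiber, where $E$ is the integral structure on the strict-support summand of $M$, not on any simple factor. Second, the claim that the reduction to a common $q_1 = \cdots = q_n = q$ is ``the key structural point'' without which ``the Kronecker argument breaks down'' is overstated. Even if the $q_j$ differ, fix one index $j$ and restrict to a general fiber $F$ of $q_j$: the restriction of the whole local system $E \tensor_\ZZ \CC$ to $F$ still underlies a polarizable complex variation of Hodge structure on the compact torus $F$, so by \lemmaref{lem:VHS-torus} all of its monodromy eigenvalues have absolute value one; since $\rho_j(\gamma)$ is one of them and the characteristic polynomial is monic with integer coefficients, all Galois conjugates of $\rho_j(\gamma)$ are among these eigenvalues, and Kronecker applies. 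The uniformization of the $q_j$'s (which is plausible but is not justified in detail by the appeal to \propositionref{prop:complexification} you give) is therefore an unnecessary crutch. The final paragraph — extending the finite-order character on $\pi_1(F)$ to a finite-order $\tilde\rho_j \in \Char(T)$ using the splitting of $\pi_1(F) \hookrightarrow \pi_1(T)$, and absorbing the unitary character $\tau_j \in \Char(T_j)$ into $(N_j, J_j)$ via \lemmaref{lem:twist}, noting that the Euler characteristic is twist-invariant — is exactly the content that the paper's statement ``$\rho$ has finite order in the group $\Char(T)$'' in the proof of \propositionref{prop:CHM-weak} leaves implicit, so making it explicit is a genuine improvement in clarity.
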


The proof of these results takes up the rest of the chapter.

\subsection{Subvarieties of complex tori}

This section contains a structure theorem for subvarieties of compact complex
tori. The statement is contained in \cite[Propositions~2.3~and~2.4]{Wang}, but 
we give a simpler argument below.

\begin{proposition} \label{prop:structure}
Let $X$ be an irreducible analytic subvariety of a compact complex torus $T$. Then
there is a subtorus $S \subseteq T$ with the following two properties:
\begin{aenumerate}
\item $S + X = X$ and the quotient $Y = X/S$ is projective.
\item If $D \subseteq X$ is an irreducible analytic subvariety with $\dim D = \dim X -
1$, then $S + D = D$.
\end{aenumerate}
In particular, every divisor on $X$ is the preimage of a divisor on $Y$.
\end{proposition}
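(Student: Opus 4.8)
The plan is to choose $S$ \emph{not} as the stabilizer of $X$ (which is too large — already for $X = T$ the stabilizer is all of $T$, while the proposition would then force every divisor to equal $X$), but as a \emph{minimal} subtorus for which the quotient is projective. Concretely, let $\mathcal{S}$ be the set of all subtori $S \subseteq T$ with $S + X = X$ and $X/S$ projective. I would first check that $\mathcal{S}$ is nonempty. The stabilizer $\{t \in T : t + X = X\} = \bigcap_{x \in X}(X - x)$ is a closed complex-analytic subgroup of $T$, so its identity component $S_0$ is a subtorus, and it has finite index in the stabilizer; then $X/S_0 \subseteq T/S_0$ has finite stabilizer, and a subvariety of a compact complex torus with finite stabilizer is projective — it maps generically finitely onto its image under the maximal abelian-variety quotient $T/S_0 \to A$ (a projective subvariety of $A$), so $X/S_0$ is Moishezon, and being also K\"ahler it is projective. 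Next, $\mathcal{S}$ is closed under intersection: for $S_1, S_2 \in \mathcal{S}$ the natural morphism $X/(S_1 \cap S_2) \to (X/S_1) \times (X/S_2)$ is injective (two points of $X$ with the same image in both quotients differ by an element of $S_1 \cap S_2$), hence finite onto its image, so $X/(S_1 \cap S_2)$ is finite over a subvariety of a projective variety and therefore projective, while $(S_1 \cap S_2) + X = X$ is clear. Thus $\mathcal{S}$ has a unique minimal element, which we take to be $S$; then $X \to Y := X/S$ is a holomorphic fiber bundle whose fibers are the cosets of $S$, and property~(a) holds by construction.

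For property~(b) I would argue by contradiction. Suppose $D \subseteq X$ is irreducible with $\dim D = \dim X - 1$ and $S + D \neq D$; write $\pi \colon X \to Y$. Since preimages $\pi^{-1}(\pi(D))$ are automatically $S$-invariant, and since the fibers of $\pi|_D$ are contained in $(\dim S)$-dimensional cosets while $\dim D = \dim X - 1$, the only possibilities are $\dim \pi(D) = \dim Y$ or $\dim \pi(D) = \dim Y - 1$; in the latter case $D = \pi^{-1}(\pi(D))$ would be $S$-invariant, contrary to assumption, so $\pi(D) = Y$. Hence for generic $y \in Y$ the intersection $D \cap \pi^{-1}(y)$ is a nonempty effective divisor on the complex torus $\pi^{-1}(y)$ (a coset of $S$); its first Chern class is a positive semi-definite Hermitian class $\lambda_y$ on $\operatorname{Lie}(S)$, and since the N\'eron--Severi group is discrete and $Y$ is connected, $\lambda_y$ is independent of $y$ — call it $\lambda \neq 0$. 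Let $S' \subsetneq S$ be the subtorus with $\operatorname{Lie}(S') = \ker \lambda$, and $p \colon S \to S/S'$ the quotient; then $\lambda$ descends to an ample class on $S/S'$, so $S/S'$ is an abelian variety. Because $p_* \mathcal{O}_S = \mathcal{O}_{S/S'}$ and $p_*$ of a fiberwise-nontrivial degree-zero line bundle vanishes, an effective divisor on $S$ whose class is pulled back from $S/S'$ is itself a pullback; applying this to $D \cap \pi^{-1}(y)$ shows it is $S'$-invariant. Letting $y$ vary and taking closures, $D$ itself is $S'$-invariant. But then $S' + X = X$, and $X/S' \to Y$ is a family of abelian varieties (fiber $S/S'$) over the projective base $Y$ to which $\mathcal{O}_X(D)$ descends as a relatively ample line bundle over a dense open of $Y$; hence $X/S'$ is Moishezon and K\"ahler, so projective. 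Thus $S' \in \mathcal{S}$ with $S' \subsetneq S$, contradicting minimality of $S$. Therefore $S + D = D$.

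The final sentence is then immediate: reducing a divisor on $X$ to its irreducible components, each is $S$-invariant by~(b), hence equals $\pi^{-1}$ of its image, a divisor on $Y$. The main obstacle in this plan is the nonemptiness of $\mathcal{S}$, i.e.\ the classical fact that a subvariety of a compact complex torus with finite stabilizer is projective; this is the point where the argument is closest to Ueno's, and it may either be quoted from \cite{Ueno} or reproved via the maximal abelian-variety quotient as sketched. A secondary technical point — the possible singularities of $X$ and $Y$ — is harmless and is dealt with by restricting to smooth loci and using that a compact K\"ahler Moishezon variety is projective.
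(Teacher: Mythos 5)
Your proof takes a genuinely different route from the paper's, especially for part~(b). The paper defines $S$ as the kernel of the algebraic reduction of $T$, deduces $S+X=X$ and $X/S$ projective from Ueno's theorem, and proves~(b) by a dimension count involving the subtorus $T_D$ generated by $D$: assuming $S+D=X$ it reduces, by dividing out the identity component of $D\cap S$, to the case $\dim S = 1$, then shows $X = S+D$ would be projective, forcing $T$ to be abelian. You instead characterize $S$ by a minimality property over all subtori $S''$ with $S''+X=X$ and $X/S''$ projective (and when $X$ generates $T$ this is the same subtorus as the paper's, since projectivity of $X/S''$ forces $T/S''$ abelian via the Albanese of a resolution), and then prove~(b) by looking at the N\'eron--Severi class $\lambda$ of $D$ on the fibers of $X\to Y$: its kernel gives a proper subtorus $S'\subsetneq S$ for which $D$ is $S'$-invariant, and then $X/S'$ projective contradicts minimality. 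This is a nice reorganization and arguably more conceptual than the paper's dimension count; your argument that $D\cap\pi^{-1}(y)$ descends to $S/S'$ (via $p_*$ of degree-zero line bundles) is correct. Two points, however, are asserted too quickly. First, your suggested direct proof that a subvariety with finite stabilizer is projective (``maps generically finitely onto its image in $A$'') does not obviously hold and is not the content of Ueno's theorem; the correct route is Ueno's statement that finite stabilizer implies general type, from which projectivity follows via Moishezon~$+$~K\"ahler (you do flag this, and the paper simply cites \cite{Ueno}). Second, and more substantively, the claim that $X/S'$ is Moishezon is not a ``hence'': it needs an actual argument -- e.g.\ that $\shO(\bar D)\otimes\bar\pi^{\,*}H^{\otimes N}$ is big for $H$ ample on $Y$ and $N\gg 0$, using $\bar\pi_*\shO(m\bar D)$ to estimate $h^0$ -- together with care about singularities of $\bar X$ (whether $\bar D$ is Cartier, and the Moishezon-plus-K\"ahler-implies-projective statement for singular spaces). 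Both gaps are fillable, but as written the proposal glosses over them, whereas the paper's version of~(b) avoids the Moishezon step at that point by reducing instead to the elementary observation that a $1$-dimensional torus and a generically-finite cover of a projective variety are projective.
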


\begin{proof}
It is well-known that the algebraic reduction of $T$ is an abelian variety. More
precisely, there is a subtorus $S \subseteq T$ such that $A = T/S$ is an abelian
variety, and every other subtorus with this property contains $S$; see e.g.
\cite[Ch.2~\S6]{BL}.

Now let $X \subseteq T$ be an irreducible analytic subvariety of $T$; without loss of
generality, we may assume that $0 \in X$ and that $X$ is not contained in any proper
subtorus of $T$. By a theorem of Ueno \cite[Theorem~10.9]{Ueno}, there is a subtorus
$S' \subseteq T$ with $S' + X \subseteq X$ and such that $X/S' \subseteq T/S'$ is of
general type. In particular, $X/S'$ is projective; but then $T/S'$ must also be
projective, which means that $S \subseteq S'$. Setting $Y = X/S$, we get a cartesian
diagram
\[
\begin{tikzcd}
X \rar[hook] \dar & T \dar \\
Y \rar[hook] & A
\end{tikzcd}
\]
with $Y$ projective. Now it remains to show that every divisor on $X$ is the pullback
of a divisor from $Y$.

Let $D \subseteq X$ be an irreducible analytic subvariety with $\dim D = \dim X - 1$;
as before, we may assume that $0 \in D$. For dimension reasons, either $S + D = D$ or
$S + D = X$; let us suppose that $S + D = X$ and see how this leads to a
contradiction. Define $T_D \subseteq T$ to be the smallest subtorus of $T$ containing
$D$; then $S + T_D = T$. If $T_D = T$, then the same reasoning as above would show that $S
+ D = D$; therefore $T_D \neq T$, and $\dim (T_D \cap S) \leq \dim S - 1$. Now
\[
	D \cap S \subseteq T_D \cap S \subseteq S,
\]
and because $\dim(D \cap S) = \dim S - 1$, it follows that $D \cap S = T_D \cap S$
consists of a subtorus $S''$ and finitely many of its translates. After dividing out
by $S''$, we may assume that $\dim S = 1$ and that $D \cap S = T_D \cap S$ is a
finite set; in particular, $D$ is finite over $Y$, and therefore also projective. Now
consider the addition morphism
\[
	S \times D \to T.
\]
Since $S + D = X$, its image is equal to $X$; because $S$ and $D$ are both
projective, it follows that $X$ is projective, and hence that $T$ is projective. But
this contradicts our choice of $S$. The conclusion is that $S + D = D$, as asserted.
\end{proof}

\begin{note}
It is possible for $S$ to be itself an abelian variety; this is why the proof that $S
+ D \neq X$ requires some care.
\end{note}

\subsection{Simple Hodge modules and abelian varieties}

We begin by proving a structure theorem for \emph{simple} polarizable complex Hodge
modules on a compact complex torus $T$; this is evidently the most important case,
because every polarizable complex Hodge module is isomorphic to a direct sum of
simple ones. Fix a simple polarizable complex Hodge module $(M, J) \in \HMC{T}{w}$.
By \propositionref{prop:semi-simple}, the polarizable real Hodge module $M \in
\HM{X}{w}$ has strict support equal to an irreducible analytic subvariety; we assume
in addition that $\Supp M$ is not contained in any proper subtorus of $T$.

\begin{theorem} \label{thm:CHM}
There is an abelian variety $A$, a surjective morphism $q \colon T \to A$ with
connected fibers, a simple $(N, K) \in \HMC{A}{w - \dim q}$ with $\chi(A, N,
K) > 0$, and a unitary character $\rho \in \Char(T)$, such that 
\begin{equation} \label{eq:CHM}
	(M, J) \simeq q^{-1} (N, K) \tensor_{\CC} \CCrho.
\end{equation}
In particular, $\Supp M = q^{-1}(\Supp N)$ is covered by translates of $\ker q$.
\end{theorem}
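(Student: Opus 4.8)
The plan is to pass to the generically defined variation of Hodge structure attached to $(M,J)$, carry out a descent there using the geometry of subvarieties of tori, and then return to Hodge modules. First, by \theoremref{thm:structure-complex} the simple object $(M,J)$ corresponds to a simple polarizable complex variation of Hodge structure $(\varH,J)$ of weight $w - \dim X$ on a dense Zariski-open subset of the smooth locus of $X := \Supp M$; recall that $X$ is irreducible (\propositionref{prop:semi-simple}), compact, and generates $T$ by hypothesis. Applying \propositionref{prop:structure} to $X$ (translated so that $0 \in X$) gives a subtorus $S \subseteq T$ with $S + X = X$ such that $A := T/S$ is an abelian variety, $q \colon T \to A$ is surjective with connected fibres --- the translates of $S = \ker q$, so $\dim q = \dim S$ --- the image $Y := X/S \subseteq A$ is projective, and $q^{-1}(Y) = X$. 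Because $S$ acts on $X$ by translation, we may shrink $Y$ to a dense Zariski-open $V$ over which $q|_X$ restricts to a locally trivial fibre bundle $U := (q|_X)^{-1}(V) \to V$ with fibre $S$, on which $(\varH,J)$ is defined.

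The crux is the monodromy of $(\varH,J)$ along the fibres of $U \to V$. Restricted to a fibre $F \cong S$, it is a polarizable complex variation on the compact complex torus $S$, hence has semisimple underlying local system by the Deligne--Nori theorem; since $\pi_1(S)$ is abelian, this local system splits into rank-one pieces, each of which underlies a rank-one complex variation of Hodge structure and is therefore unitary. Thus $\varH|_F$ is a direct sum of unitary characters of $\pi_1(S)$, and this multiset, being a discrete invariant, is independent of $F$ as $V$ is connected. Since $\pi_1(F)$ surjects onto $\ker\bigl(\pi_1(U) \to \pi_1(V)\bigr)$, which is normal in $\pi_1(U)$, a Clifford-theoretic argument --- this is exactly Wang's idea \cite{Wang} --- shows that simplicity of $\varH$ forces the fibre monodromy to be isotypic, equal to $\chi \cdot \id$ for a single unitary character $\chi \in \Char(S)$. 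As $\pi_1(S) \subseteq \pi_1(T)$ is a saturated sublattice, $\chi$ extends to a unitary character $\rho \in \Char(T)$, and $(\varH,J) \tensor_{\CC} \CC_{\rho^{-1}}$ then has trivial monodromy along the fibres of $U \to V$. I expect this step to be the main obstacle: ruling out a nontrivial $\pi_1(U)$-orbit of characters, and correctly relating $\pi_1(U)$, $\pi_1(F)$, $\pi_1(X)$ and $\pi_1(T)$ (possibly after arranging that $S$ acts freely), is where the real work and Wang's input lie.

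For the descent: restricted to each fibre $F$, the twisted variation $(\varH,J) \tensor_{\CC} \CC_{\rho^{-1}}$ has trivial monodromy on the compact K\"ahler manifold $F$, hence is constant by the theorem of the fixed part; so it is constant along the fibres of $U \to V$ and descends to a polarizable complex variation $(\varH_0, J_0)$ on $V$, still simple because inverse image along a surjection with connected fibres is faithful on local systems. By \theoremref{thm:structure-complex} it extends uniquely to a polarizable complex Hodge module with strict support $\overline V = Y$, which by Kashiwara's equivalence (\lemmaref{lem:Kashiwara}) we regard as a simple $(N',K') \in \HMC{A}{w - \dim q}$ with $\Supp N' = Y$ projective. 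Then $q^{-1}(N',K') \tensor_{\CC} \CC_{\rho}$ is a polarizable complex Hodge module on $T$ with strict support $q^{-1}(Y) = X$ and with generic variation $(\varH,J)$, so it is isomorphic to $(M,J)$ by the uniqueness in \theoremref{thm:structure-complex}. This proves \eqref{eq:CHM}, and $\Supp M = X = q^{-1}(\Supp N')$ is by construction a union of fibres of $q$, i.e.\ of translates of $\ker q$.

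It remains to arrange $\chi(A, N', K') > 0$. Since $\Supp N' = Y$ generates $A$, the simple holonomic $\Dmod$-module underlying $N'$ is not supported on a translate of a proper abelian subvariety; by the structure theory of simple holonomic $\Dmod$-modules on abelian varieties \cite{Schnell-holo}, either $\chi(A, N', K') > 0$ already, or $(N',K')$ is --- up to a unitary twist --- the inverse image of a simple complex Hodge module under a surjection of $A$ onto a strictly smaller abelian variety. Iterating in the latter case (the dimension strictly drops, so this terminates) and composing the successive quotient maps with $q$ while multiplying the characters together, one obtains an abelian variety $A$, a surjection $q \colon T \to A$ with connected fibres, a unitary $\rho \in \Char(T)$, and a simple $(N,K) \in \HMC{A}{w - \dim q}$ with $\Supp N$ projective and $\chi(A,N,K) > 0$ for which \eqref{eq:CHM} holds. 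The only care required here is to match conventions with the precise statements of \cite{Schnell-holo}.
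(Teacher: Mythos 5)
Your proposal follows the paper's overall strategy (reduce to a generically defined complex variation, use \propositionref{prop:structure} to produce $q\colon T\to A$, descend, then iterate to force $\chi>0$), but at the crucial descent step you take a genuinely different route. The paper picks \emph{one} unitary character $\rho$ occurring in the fibre monodromy, twists by $\bar\rho$, applies Zucker's theorem to see that $\ql\bigl((\varH,J)\tensor\CC_{\bar\rho}\bigr)$ is again a semi-simple polarizable complex variation on $U$, and then uses the adjunction morphism together with simplicity of both sides to obtain $q^{-1}(\varH_U,K)\simeq(\varH,J)\tensor\CC_{\bar\rho}$. You instead argue group-theoretically: the fibre monodromy is a sum of unitary characters of $\pi_1(S)$, the conjugation action of $\pi_1(q^{-1}(U))$ on (the image of) $\pi_1(F)$ is trivial because the torus bundle $q^{-1}(U)\to U$ has trivial monodromy on $H_1(S,\ZZ)$, and Clifford theory then forces the fibre monodromy to be $\chi\cdot\id$ for a single character. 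Both arguments are correct; yours is perhaps more elementary (no appeal to Zucker) but requires the triviality of the conjugation action to be established explicitly, which you flag but do not prove. The paper's argument sidesteps this by working directly with the semi-simplicity of $\ql$ of a polarizable complex VHS.

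Two points you gloss over deserve attention. First, you write ``we may shrink $Y$ to a dense Zariski-open $V$ over which $q|_X$ restricts to a locally trivial fibre bundle $U\to V$, \emph{on which $(\varH,J)$ is defined}.'' A priori the variation is only defined on the complement of an analytic subset $Z\subseteq X$ (singular locus of $X$ and of $M$) which need not be $S$-invariant. The paper shows (using \propositionref{prop:structure}(b), which applies to the codimension-one components of $Z$) that one can choose $U$ so that $q^{-1}(U)\cap Z$ has codimension $\geq 2$, and then extends the variation across this set using Schmid's extension theorem \cite[Proposition~4.1]{Schmid}. This extension step is genuinely needed, both for the paper's Zucker argument and for your Clifford argument (you want the restriction to an \emph{entire} fibre $F\cong S$ to make sense as a VHS). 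Second, in the final $\chi>0$ step you say the $\Dmod$-module is a pullback ``up to a \emph{unitary} twist''; the result \cite[Corollary~5.2]{Schnell-holo} only provides a twist by a flat line bundle, not a priori unitary. The paper resolves this by re-running the argument of \propositionref{prop:CHM-weak} with the composed projection $f\circ q$, which produces a fresh unitary character from the fibre monodromy of the Hodge module structure; you should do the same rather than asserting unitarity directly from \cite{Schnell-holo}.
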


Let $X = \Supp M$. By \propositionref{prop:structure}, there is a subtorus $S
\subseteq T$ such that $S + X = X$ and such that $Y = X/S$ is projective. Since $Y$
is not contained in any proper subtorus, it follows that $A = T/S$ is an abelian
variety. Let $q \colon T \to A$ be the quotient mapping, which is proper and smooth of
relative dimension $\dim q = \dim S$. This will not be our final choice for 
\theoremref{thm:CHM}, but it does have almost all the properties that we want (except
for the lower bound on the Euler characteristic).

\begin{proposition} \label{prop:CHM-weak}
There is a simple $(N, K) \in \HMC{A}{w - \dim q}$ with strict support $Y$ and a
unitary character $\rho \in \Char(T)$ for which \eqref{eq:CHM} holds.
\end{proposition}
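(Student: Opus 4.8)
The plan is to descend along the map $q \colon T \to A$, which by \propositionref{prop:structure} restricts to a holomorphic fibre bundle $X \to Y$ with compact connected fibre the subtorus $S$. First I would pass from Hodge modules to variations: by \theoremref{thm:structure-complex} the simple module $(M,J)$ with strict support $X$ comes from a simple polarizable complex variation of Hodge structure $(\varH, J)$ of weight $w - \dim X$ on a dense Zariski-open $U \subseteq X^{\mathrm{sm}}$, and after shrinking $U$ (using that $q$ is proper, so $q(X \setminus U)$ is a proper analytic subset) I may assume $U = q^{-1}(V)$ for a Zariski-open $V \subseteq Y^{\mathrm{sm}}$, so that $q \colon U \to V$ is again a fibre bundle with fibre $S$. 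Write $\varHC$ for the underlying complex local system; it is simple by \propositionref{prop:CHM-Kaehler}.

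The crucial point is that the image $N$ of $\pi_1(S)$ in $\pi_1(U)$ is \emph{central}: the composite $\pi_1(S) \to \pi_1(U) \to \pi_1(T)$ is, on fundamental groups, the inclusion of the subtorus $S$ into $T$, hence injective, so the conjugation action of $\pi_1(U)$ on $N$ is already trivial inside the abelian group $\pi_1(T)$ and therefore trivial. By Schur's lemma the central subgroup $N$ acts on the simple local system $\varHC$ through a single character $\chi \colon N \to \CC^{\times}$, and restricting the polarization to a fibre (on which $\varHC$ becomes a sum of copies of the rank-one local system attached to $\chi$) forces $\chi$ to be unitary. Since $\pi_1(S)$ is a direct summand of $\pi_1(T)$, with free quotient $\pi_1(A)$, the character $\chi$ extends to a unitary character $\rho \in \Char(T)$. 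Then $\varHC \tensor_{\CC} \CC_{\rho^{-1}}$ has trivial monodromy along the connected fibres of $q \colon U \to V$ and therefore descends: there is a local system $\varHC'$ on $V$ with $\varHC \simeq q^{-1}\varHC' \tensor_{\CC} \CC_{\rho}|_U$.

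The main obstacle is to promote this to an isomorphism of variations of Hodge structure, i.e.\ to see that the Hodge filtration of $\varH \tensor \CC_{\rho^{-1}}$ is pulled back from $V$. For this I would restrict to a fibre $q^{-1}(v) \cong S$: there $\varH \tensor \CC_{\rho^{-1}}$ is a polarizable complex variation of Hodge structure on the compact K\"ahler manifold $S$ with trivial monodromy, hence a constant variation by the theorem of the fixed part; consequently the Hodge bundles $F^p$ are flat along each fibre of $q$, and they glue to a filtration on $\varHC'$ making it a simple polarizable complex variation $(\varH',J')$ of weight $w - \dim X$ on $V$ (simplicity being inherited because $q$ has connected fibres, so $q^{-1}$ is fully faithful on local systems). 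Finally, \theoremref{thm:structure-complex} turns $(\varH',J')$ into a simple $(N,K) \in \HMC{A}{w-\dim q}$ with strict support $Y$ — the weight works out since $\dim X - \dim Y = \dim S = \dim q$ — and now $(M,J)$ and $q^{-1}(N,K) \tensor_{\CC} \CC_{\rho}$ are two objects of $\HMC{T}{w}$ with strict support $X = q^{-1}(Y)$ that restrict to isomorphic variations over $U$; by the equivalence of categories they are isomorphic, which is \eqref{eq:CHM}. Note that the Euler-characteristic positivity $\chi(A,N,K) > 0$ asserted in \theoremref{thm:CHM} is not part of \propositionref{prop:CHM-weak}; producing it is exactly what forces the later adjustment of $q$.
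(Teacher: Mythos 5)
Your proof contains a genuine gap right at the start, and then takes a genuinely different route for the rest.

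The gap: you claim that, because $q$ is proper, $q(X\setminus U)$ is a proper analytic subset, so $U$ can be \emph{shrunk} to $q^{-1}(V)$. Remmert's theorem makes $q(X\setminus U)$ analytic, but not a proper subset: the singular locus of the Hodge module may contain components of codimension $\ge 2$ in $X$ that dominate $Y$, in which case $q(X\setminus U)=Y$ and no such $V$ exists. Moreover shrinking $U$ only makes $X\setminus U$ larger, so it goes the wrong way; what one actually needs is to \emph{extend} the variation over those codimension-$\ge 2$ components. This is precisely where the paper uses \propositionref{prop:structure}(b), which says every divisor of $X$ is a preimage of a divisor of $Y$: this guarantees that no \emph{divisorial} component of the singular set dominates $Y$, so after removing from $Y$ the images of the divisorial components one is left with a locus of codimension $\ge 2$ in $q^{-1}(U)$, across which the polarizable variation extends by Schmid's theorem (\cite[Proposition 4.1]{Schmid}). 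Without this input from Ueno's theorem via \propositionref{prop:structure}(b), there is no open set of the form $q^{-1}(V)$ carrying the variation, and the whole argument stalls.

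Once that is repaired, the rest of your argument is a genuinely different — and rather elegant — route to the same conclusion. You observe that the image $N$ of $\pi_1(S)$ in $\pi_1(q^{-1}(V))$ is central (because it injects into the abelian $\pi_1(T)$ and is normal), so by Schur's lemma the simple local system $\varHC$ is $\chi$-isotypic on each fibre for a \emph{single} character $\chi$, which is unitary by \lemmaref{lem:VHS-torus}; extending $\chi$ to a unitary $\rho\in\Char(T)$, the twist $\varHC\tensor\CC_{\rho^{-1}}$ has trivial fibrewise monodromy and descends. The paper instead never proves isotypicity: it picks any one $\rho$ occurring in the fibre restriction, pushes forward $(\varH,J)\tensor\CC_{\rhob}$ by $q$ using Zucker's theorem on direct images of variations over a compact K\"ahler family, and then uses semi-simplicity and the adjunction $q^{-1}q_*\to\id$ to extract a simple summand whose pullback recovers $(\varH,J)$. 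Your approach reveals more (the restriction to a fibre is isotypic), but the last stage — promoting the descended local system to a polarizable complex variation by gluing fibrewise-flat Hodge bundles — is somewhat informal: ``theorem of the fixed part'' by itself does not directly give constancy of a variation with trivial monodromy on a fibre (one either needs a rigidity statement for period maps from compact manifolds, or, as the paper does, Zucker's theorem, which handles both the Hodge filtration and the polarization on the descended object in one stroke). You also implicitly use that $q^{-1}(N,K)$ has strict support $X$; the paper records this with a citation to \cite[Lemma 20.2]{Schnell-holo}. Your closing remark that the positivity $\chi(A,N,K)>0$ is not part of \propositionref{prop:CHM-weak} is correct and matches the paper's structure.
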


By \theoremref{thm:structure-complex}, $(M, J)$ corresponds to a polarizable complex
variation of Hodge structure of weight $w - \dim X$ on a dense Zariski-open subset of
$X$. The crucial observation, due to Wang, is that we can choose this set to be of
the form $q^{-1}(U)$, where $U$ is a dense Zariski-open subset of the smooth locus of
$Y$.

\begin{lemma}
There is a dense Zariski-open subset $U \subseteq Y$, contained in the smooth locus
of $Y$, and a polarizable complex variation of Hodge structure $(\varH, J)$ of weight
$w - \dim X$ on $q^{-1}(U)$, such that $(M, J)$ is the polarizable complex Hodge module
corresponding to $(\varH, J)$ in \theoremref{thm:structure-complex}.
\end{lemma}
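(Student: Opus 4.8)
The plan is to produce $(\varH, J)$ as the restriction of $(M,J)$ to an open piece of its smooth locus, so that the entire content of the lemma becomes the assertion that this smooth locus contains a subset of the cylindrical form $q^{-1}(U)$. By \theoremref{thm:structure-complex}, $(M,J)$ has strict support $X := \Supp M$ and is the complex Hodge module attached to a generically defined polarizable complex variation of Hodge structure of weight $w - \dim X$ on $X$. Let $\Sigma \subseteq X$ be the complement of the largest open subset over which $M$ restricts to a polarizable complex variation of Hodge structure; this is a closed analytic subset of $X$ which contains $X_{\mathrm{sing}}$ and, on $X_{\mathrm{sm}}$, coincides with the locus where the local system $\varH_{\CC}$ underlying the generic variation fails to extend (by \cite[Lemme~5.1.10]{Saito-HM}, compare \lemmaref{lem:CHM-smooth}). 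It then suffices to prove that $\Sigma \subseteq q^{-1}(Z)$ for some proper Zariski-closed subset $Z \subsetneq Y$: for then $U := Y \setminus Z$ is a dense Zariski-open subset of $Y$ contained in $Y_{\mathrm{sm}}$, the set $q^{-1}(U) = X \setminus q^{-1}(Z)$ is a connected complex submanifold of $X_{\mathrm{sm}}$ on which $M$ is smooth, and we take $(\varH, J)$ to be its restriction there; the uniqueness in \theoremref{thm:structure-complex} then identifies $(M,J)$ with the complex Hodge module associated to $(\varH, J)$.

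So the real work is to show that $\Sigma$ is the $q$-preimage of a proper closed subset of $Y$. First, since $q \colon T \to A$ is a submersion and $X = q^{-1}(Y)$ as analytic sets (using $S + X = X$ and $\ker q = S$), one has $X_{\mathrm{sing}} = q^{-1}(Y_{\mathrm{sing}})$. Next, let $E$ denote the union of the codimension-one irreducible components of $\Sigma$; on $X_{\mathrm{sm}}$ the remaining part $\Sigma \setminus E$ has codimension $\geq 2$, and since the fundamental group is insensitive to the removal of an analytic subset of codimension $\geq 2$, the local system $\varH_{\CC}$ extends across $\Sigma \setminus E$ as a local system. Consequently the intersection complex $\ratM$ is lisse there, so $M$ is smooth along $(\Sigma \setminus E) \cap X_{\mathrm{sm}}$ by \cite[Lemme~5.1.10]{Saito-HM}; hence $\Sigma \setminus E$ meets $X_{\mathrm{sm}}$ nowhere, i.e. $\Sigma \subseteq E \cup X_{\mathrm{sing}}$ with $E$ a divisor in $X$. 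Finally, by \propositionref{prop:structure} every divisor in $X$ is the $q$-preimage of a divisor in $Y$, so $E = q^{-1}(D_Y)$; combining, $\Sigma \subseteq q^{-1}(D_Y \cup Y_{\mathrm{sing}})$, and $Z := D_Y \cup Y_{\mathrm{sing}}$ is a proper Zariski-closed subset of the irreducible projective variety $Y$.

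The main obstacle is precisely the shape of $\Sigma$: the non-smooth locus of a Hodge module on $X$ is a priori some complicated analytic set, and its coincidence with a cylinder $q^{-1}(Z)$ — which is exactly Wang's observation in this language — rests on two nontrivial inputs, namely purity of the fundamental group (insensitivity to codimension-$\geq 2$ loci), so that the non-divisorial part of the singular set is absorbed into $X_{\mathrm{sing}}$, and the rigidity of divisors in $X$ supplied by \propositionref{prop:structure}. Once this is in hand, the remaining points are routine: that $q$ is a submersion with $X = q^{-1}(Y)$; that restricting a polarizable complex Hodge module to an open subset of its smooth locus gives a polarizable complex variation of Hodge structure; and that the weight of the latter is $w - \dim X$ — all immediate from \theoremref{thm:structure-complex} and Saito's theory.
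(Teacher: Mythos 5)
Your proof is correct, and it follows a genuinely different route at the one non-trivial technical step, though the key geometric input is the same. The paper takes $Z$ to be the union of $X_{\mathrm{sing}}$ with the singular locus of the Hodge module, uses \propositionref{prop:structure} to conclude that no divisorial component of $Z$ dominates $Y$, picks $U$ so that $q^{-1}(U) \cap Z$ has codimension $\geq 2$, and then \emph{extends} the real VHS across this small set by invoking Schmid's extension theorem \cite[Proposition~4.1]{Schmid}. You instead characterize the non-smooth locus $\Sigma$ directly: you show $X_{\mathrm{sing}} = q^{-1}(Y_{\mathrm{sing}})$ (since $q$ is a submersion with $X = q^{-1}(Y)$), then kill the non-divisorial part of $\Sigma \cap X_{\mathrm{sm}}$ by a $\pi_1$-extension argument for local systems across codimension-$\geq 2$ analytic subsets, followed by \cite[Lemme~5.1.10]{Saito-HM} to upgrade ``underlying perverse sheaf is lisse'' to ``Hodge module is smooth''; finally you apply \propositionref{prop:structure}(b) to the divisorial part. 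The paper's version is shorter and needs no claim about what $\Sigma$ looks like --- it just removes enough and extends; yours proves the cleaner structural statement that $\Sigma$ is exactly a $q$-preimage of a proper closed subset of $Y$, and replaces the analytic input (Schmid's extension of polarized VHS across codimension $\geq 2$) with a purely topological extension of the local system plus Saito's smoothness criterion. Both approaches work; the one small point worth making explicit in your write-up is that the extension of $\varHC$ as a $\CC$-local system automatically respects the real structure (extend $\varHR$ first, by the same $\pi_1$ argument), so that $\ratM$ --- not just $\ratM \tensor_{\RR} \CC$ --- becomes lisse near the point in question, which is what \cite[Lemme~5.1.10]{Saito-HM} actually requires.
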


\begin{proof}
Let $Z \subseteq X$ be the union of the singular locus of $X$ and the singular locus
of $M$. Then $Z$ is an analytic subset of $X$, and according to
\theoremref{thm:Saito-structure}, the restriction of $M$ to $X \setminus Z$ is a
polarizable real variation of Hodge
structure of weight $w - \dim X$. By \propositionref{prop:structure}, no
irreducible component of $Z$ of dimension $\dim X - 1$ dominates $Y$; we can
therefore find a Zariski-open subset $U \subseteq Y$, contained in the smooth locus
of $Y$, such that the intersection $q^{-1}(U) \cap Z$ has codimension $\geq 2$ in
$q^{-1}(U)$. Now $\varH$ extends uniquely to a polarizable real variation of Hodge
structure on the entire complex manifold $q^{-1}(U)$, see
\cite[Proposition~4.1]{Schmid}. The assertion about $J$ follows easily.
\end{proof}

For any $y \in U$, the restriction of $(\varH, J)$ to the fiber $q^{-1}(y)$ is a
polarizable complex variation of Hodge structure on a translate of the compact complex
torus $\ker q$. By \lemmaref{lem:VHS-torus}, the restriction to $q^{-1}(y)$ of the
underlying local system 
\[
	\ker \Bigl( \ratJC - i \cdot \id \, \colon \varHC \to \varHC \Bigr)
\]
is the direct sum of local systems of the form $\CCrho$, for $\rho \in \Char(T)$
unitary; when $M$ admits an integral structure, $\rho$ has finite order in the group
$\Char(T)$.

\begin{proof}[Proof of \propositionref{prop:CHM-weak}]
Let $\rho \in \Char(T)$ be one of the unitary characters in question, and let 
$\rhob \in \Char(T)$ denote its complex conjugate. The tensor product $(\varH, J)
\tensor_{\CC} \CCrhob$ is a polarizable complex variation of Hodge structure of
weight $w - \dim X$ on the open subset $q^{-1}(U)$. Since all fibers of $q \colon
q^{-1}(U) \to U$ are translates of the compact complex torus $\ker q$, classical
Hodge theory for compact K\"ahler manifolds \cite[Theorem~2.9]{Zucker} implies that
\begin{equation} \label{eq:ql-varH}
	\ql \bigl( (\varH, J) \tensor_{\CC} \CCrhob \bigr)
\end{equation}
is a polarizable complex variation of Hodge structure of weight $w - \dim X$ on
$U$; in particular, it is again semi-simple. By our choice of $\rho$, the adjunction
morphism
\[
	q^{-1} \ql \bigl( (\varH, J) \tensor_{\CC} \CCrhob \bigr) \to 
		(\varH, J) \tensor_{\CC} \CCrhob
\]
is nontrivial. Consequently, \eqref{eq:ql-varH} must have at least one simple summand
$(\varH_U, K)$ in the category of polarizable complex variations of Hodge structure of
weight $w - \dim X$ for which the induced morphism $q^{-1} (\varH_U, K) \to (\varH,
J) \tensor_{\CC} \CCrhob$ is nontrivial. Both sides being simple, the 
morphism is an isomorphism; consequently, 
\begin{equation} \label{eq:varHY}
	q^{-1}(\varH_U, K) \tensor_{\CC} \CCrho \simeq (\varH, J).
\end{equation}
Now let $(N, K) \in \HMC{A}{w - \dim q}$ be the polarizable complex Hodge module on
$A$ corresponding to $(\varH_U, K)$; by construction, $(N, K)$ is simple with strict
support $Y$.  Arguing as in \cite[Lemma~20.2]{Schnell-holo}, one proves that the naive
pullback $q^{-1}(N, K) \in \HMC{T}{w}$ is simple with strict support $X$.  Because of
\eqref{eq:varHY}, this means that $(M, J)$ is isomorphic to $q^{-1}(N, K)
\tensor_{\CC} \CCrho$ in the category $\HMC{T}{w}$.
\end{proof}

We have thus proved \theoremref{thm:CHM}, except for the inequality $\chi(A, N, K) >
0$. Let $\Nmod$ denote the regular holonomic $\Dmod$-module underlying $N$; then
\[
	\Nmod = \Nmod' \oplus \Nmod'' 
		= \ker(K - i \cdot \id) \oplus \ker(K + i \cdot \id),
\]
where $K \in \End(\Nmod)$ refers to the induced endomorphism. By
\propositionref{prop:CHM-Kaehler}, both $\Nmod'$ and $\Nmod''$ are simple 
with strict support $Y$. Since $A$ is an abelian variety, one has for example by
\cite[\S5]{Schnell-holo} that
\[
	\chi(A, N, K) = \sum_{i \in \ZZ} (-1)^i \dim H^i \bigl( A, \DR(\Nmod') \bigr) 
		\geq 0.
\]
Now the point is that a simple holonomic $\Dmod$-module with vanishing Euler
characteristic is always (up to a twist by a line bundle with flat connection) the
pullback from a lower-dimensional abelian variety \cite[\S20]{Schnell-holo}.

\begin{proof}[Proof of \theoremref{thm:CHM}]
Keeping the notation from \propositionref{prop:CHM-weak}, we have a surjective
morphism $q \colon T \to A$ with connected fibers, a simple polarizable complex Hodge
module $(N, K) \in \HMC{Y}{w - \dim q}$ with strict support $Y = q(X)$, and a unitary
character $\rho \in \Char(T)$ such that
\[
	(M, J) \simeq q^{-1}(N, K) \tensor_{\CC} \CCrho.
\]
If $(N, K)$ has positive Euler characteristic, we are done, so let us assume from now
on that $\chi(A, N, K) = 0$. This means that $\Nmod'$ is a simple regular holonomic
$\Dmod$-module with strict support $Y$ and Euler characteristic zero.

By \cite[Corollary~5.2]{Schnell-holo}, there is a surjective morphism $f \colon A \to
B$ with connected fibers from $A$ to a lower-dimensional abelian variety $B$, such
that $\Nmod'$ is (up to a twist by a line bundle with flat connection) the pullback of
a simple regular holonomic $\Dmod$-module with positive Euler characteristic. Setting
\[
	\Mmod = \Mmod' \oplus \Mmod'' 
		= \ker(J - i \cdot \id) \oplus \ker(J + i \cdot \id),
\]
it follows that $\Mmod'$ is (again up to a twist by a line bundle with flat
connection) the pullback by $f \circ q$ of a simple regular holonomic $\Dmod$-module
on $B$.  Consequently, there is a dense Zariski-open subset $U \subseteq f(Y)$ such that the
restriction of $\Mmod'$ to $(f \circ q)^{-1}(U)$ is coherent as an $\shO$-module. By
\lemmaref{lem:CHM-smooth}, the restriction of $(M, J)$ to this open set is therefore
a polarizable complex variation of Hodge structure of weight $w - \dim X$. After
replacing our original morphism $q \colon T \to A$ by the composition $f \circ q
\colon T \to B$, we can argue as in the proof of \propositionref{prop:CHM-weak} to
show that \eqref{eq:CHM} is still satisfied (for a different choice of $\rho \in
\Char(T)$, perhaps).  

With some additional work, one can prove that now $\chi(A, N, K) > 0$. Alternatively,
the same result can be obtained by the following more indirect method: as long as
$\chi(A, N, K) = 0$, we can repeat the argument above; since the dimension of $A$
goes down each time, we must eventually get to the point where $\chi(A, N, K) > 0$. This
completes the proof of \theoremref{thm:CHM}.
\end{proof}

\subsection{Proof of the main result}

As in \theoremref{thm:CHM-main}, let $(M, J) \in \HMC{T}{w}$ be a polarizable complex
Hodge module on a compact complex torus $T$. Using the decomposition by strict
support, we can assume without loss of generality that $(M, J)$ has strict support
equal to an irreducible analytic subvariety $X \subseteq T$.  After translation, we
may assume moreover that $0 \in X$. Let $T' \subseteq T$ be the smallest subtorus of
$T$ containing $X$; by Kashiwara's equivalence, we have $(M, J) = \il (M', J')$ for
some $(M', J') \in \HMC{T'}{w}$, where $i \colon T' \into T$ is the inclusion. Now
\theoremref{thm:CHM} gives us a morphism $q' \colon T' \to A'$ such that $(M', J')$
is isomorphic to the direct sum of pullbacks of polarizable complex Hodge modules
twisted by unitary local systems. Since $i^{-1} \colon \Char(T) \to \Char(T')$ is
surjective, the same is then true for $(M, J)$ with respect to the quotient mapping
$q \colon T \to T/\ker q'$. This proves \theoremref{thm:CHM-main}.

\begin{proof}[Proof of \corollaryref{cor:CHM-real}]
By considering the complexification 
\[
	(M \oplus M, J_M) \in \HMC{T}{w}, 
\]
we reduce the problem to the situation considered in \theoremref{thm:CHM-main}. 
It remains to show that all the characters in \eqref{eq:decomposition} have finite
order in $\Char(T)$ if $M$ admits an integral structure. By \lemmaref{lem:integral-summand}, every summand in the
decomposition of $M$ by strict support still admits an integral structure, and so we
may assume without loss of generality that $M$ has strict support equal to $X
\subseteq T$ and that $0 \in X$. As before, we have $(M, J) = \il (M', J')$, where $i
\colon T' \into T$ is the smallest subtorus of $T$ containing $X$; it is easy to see
that $M'$ again admits an integral structure. Now we apply the same argument as in
the proof of \theoremref{thm:CHM-main} to the finitely many simple factors of $(M,
J)$, noting that the characters $\rho \in \Char(T)$ that come up always have finite
order by \lemmaref{lem:VHS-torus} below.
\end{proof}

\begin{note}
As in the proof of \lemmaref{lem:twist}, it follows that $M \oplus M$ is isomorphic
to the direct sum of the polarizable real Hodge modules
\begin{equation} \label{eq:summand-j}
	\ker \Bigl( q_j^{-1} J_j \tensor J_{\rho_j} + \id \Bigr)
		\subseteq q_j^{-1} N_j \tensor_{\RR} \varH_{\rho_j}.
\end{equation}
Furthermore, one can show that for each $j = 1, \dotsc, n$, exactly one of two things
happens. (1) Either the object in \eqref{eq:summand-j} is simple, and therefore
occurs among the simple factors of $M$; in this case, the underlying regular
holonomic $\Dmod$-module $\Mmod$ will contain the two simple factors
\[
	\Bigl( \qu_j \Nmod_j' \tensor_{\OT} (L_j, \nabla_j) \Bigr)
		\oplus \Bigl( \qu_j \Nmod_j'' \tensor_{\OT} (L_j, \nabla_j)^{-1} \Bigr).
\]
(2) Or the object in \eqref{eq:summand-j} splits into two copies of a simple
polarizable real Hodge module, which also has to occur among the simple factors of $M$.
In this case, one can actually arrange that $(N_j, J_j)$ is real and that the
character $\rho_j$ takes values in $\{-1, +1\}$. The simple object in question is the
twist of $(N_j, J_j)$ by the polarizable real variation of Hodge structure of rank
one determined by $\rho_j$; moreover, $\Mmod$ will contain $\qu_j
\Nmod_j' \tensor_{\OT} (L_j, \nabla_j) \simeq \qu_j \Nmod_j'' \tensor_{\OT} (L_j,
\nabla_j)^{-1}$ as a simple factor.
\end{note}

\subsection{A lemma about variations of Hodge structure}

The fundamental group of a compact complex torus is abelian, and so every polarizable
complex variation of Hodge structure is a direct sum of unitary local systems of rank
one; this is the content of the following elementary lemma \cite[Lemma~1.8]{Schnell-laz}.

\begin{lemma} \label{lem:VHS-torus}
Let $(\varH, J)$ be a polarizable complex variation of Hodge structure on a compact complex
torus $T$. Then the local system $\varHC = \varHR \tensor_{\RR} \CC$ is isomorphic to
a direct sum of unitary local systems of rank one. If $\varH$ admits an integral
structure, then each of these local systems of rank one has finite order.
\end{lemma}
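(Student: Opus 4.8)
The plan is to exploit the fact that the fundamental group $\Gamma = \pi_1(T, t_0)$ of a compact complex torus is abelian, so that every irreducible complex representation of $\Gamma$ is one-dimensional. First I would recall that, since $T$ is a compact K\"ahler manifold, the complex local system $\varHC$ underlying a polarizable complex variation of Hodge structure is semi-simple — this follows from the Deligne--Nori theorem quoted earlier in the excerpt (or directly from \cite[\S1.12]{Deligne}), applied to the underlying real variation $\varH$, together with \lemmaref{lem:simple-RVHS}. Hence $\varHC$ decomposes as a direct sum of simple complex local systems, and each such simple summand, being an irreducible representation of the abelian group $\Gamma$, has rank one. A rank-one local system on $T$ is by definition given by a character $\rho \in \Char(T)$, i.e.\ $\varHC \simeq \bigoplus_j \CC_{\rho_j}$ for finitely many $\rho_j$.

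Next I would show each $\rho_j$ is unitary. The variation $\varH$ carries a polarization, i.e.\ a flat $(-1)^w$-(anti)symmetric pairing which is positive definite on a suitable twist of the Hodge decomposition; restricting this structure to a rank-one summand $\CC_{\rho_j}$ (or, more precisely, using that the polarization form identifies $\CC_{\rho_j}$ with the dual of another summand $\CC_{\rho_k}$ compatibly with the positivity) forces the monodromy of $\CC_{\rho_j}$ to preserve a positive-definite Hermitian form, hence $\abs{\rho_j(\gamma)} = 1$ for all $\gamma \in \Gamma$. Concretely: a polarizable complex VHS on a point is just a polarized complex Hodge structure, and on the abelian $\pi_1$ the monodromy acts by commuting semisimple operators preserving the (indefinite) polarization form and the Hodge filtration; the Hodge--Riemann bilinear relations then make the relevant Hermitian form on each Hodge piece definite, so the monodromy is unitary on each isotypic piece. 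This is exactly the standard fact that a polarizable complex VHS with abelian (or more generally with semisimple monodromy admitting a fixed vector) fundamental group is a sum of unitary rank-one systems; I would simply cite \cite[Lemma~1.8]{Schnell-laz} for the precise argument, as the paper already does.

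Finally, for the integral structure claim: if $\varH$ admits an integral structure, then $\varHC \simeq E \tensor_\ZZ \CC$ for some local system $E$ of $\ZZ$-modules, and in particular $\varHC$ is defined over $\ZZ$; passing to the semisimplification, each rank-one summand $\CC_{\rho_j}$ that appears must have monodromy eigenvalues that are algebraic integers (they are eigenvalues of integer matrices), and by the unitarity just established, $\rho_j(\gamma)$ and all its Galois conjugates — which are again monodromy eigenvalues of the same integral matrix — have absolute value one. By Kronecker's theorem, an algebraic integer all of whose conjugates lie on the unit circle is a root of unity; since $\Gamma$ is finitely generated abelian, this forces $\rho_j$ to have finite order in $\Char(T)$. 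The main obstacle, such as it is, lies in the unitarity step — one must be careful that the polarization of a complex VHS pairs a summand $\CC_{\rho_j}$ with $\CC_{\rho_j^{-1}}$ rather than with itself, so positivity is extracted from the combination of the Hodge--Riemann relations with the weight decomposition — but since \lemmaref{lem:VHS-torus} is explicitly attributed to \cite[Lemma~1.8]{Schnell-laz}, in this paper it suffices to assemble these three steps and invoke that reference for the details.
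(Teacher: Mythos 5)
Your proof follows essentially the same route as the paper's: semi-simplicity of $\varHC$ via Deligne \cite[\S1.12]{Deligne}, rank one because $\pi_1(T)$ is abelian, unitarity from the polarization via \cite[Proposition~1.13]{Deligne}, and finite order from integrality plus Kronecker's theorem. Your explicit remark that the Galois conjugates of $\rho_j(\gamma)$ are themselves eigenvalues of the same integer matrix (hence also of absolute value one) fills in a small step that the paper leaves implicit, but the argument is the same.
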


\begin{proof}
According to \cite[\S1.12]{Deligne}, the underlying local system of a polarizable
complex variation of Hodge structure on a compact K\"ahler manifold is semi-simple; in
the case of a compact complex torus, it is therefore a direct sum of rank-one local
systems. The existence of a polarization implies that the individual local systems
are unitary \cite[Proposition~1.13]{Deligne}. Now suppose that $\varH$ admits an
integral structure, and let $\mu \colon \pi_1(A, 0) \to \GL_n(\ZZ)$ be the monodromy
representation. We already know that the complexification of $\mu$ is a 
direct sum of unitary characters. Since $\mu$ is defined over $\ZZ$, the values of
each character are algebraic integers of absolute value one; by Kronecker's theorem,
they must be roots of unity.
\end{proof}

\subsection{Integral structure and points of finite order}

One can combine the decomposition in \corollaryref{cor:CHM-real} with known results
about Hodge modules on abelian varieties \cite{Schnell-laz} to prove the following
generalization of Wang's theorem.

\begin{corollary} \label{cor:finite-order}
If $M \in \HM{T}{w}$ admits an integral structure, then the sets
\[
	S_m^i(T, M) 
		= \menge{\rho \in \Char(T)}{\dim H^i(T, \ratM \tensor_{\RR} \CCrho) \geq m}
\]
are finite unions of translates of linear subvarieties by points of finite order.
\end{corollary}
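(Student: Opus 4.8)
The plan is to reduce the statement, via the structure theorem in Corollary~\ref{cor:CHM-real}, to the known case of abelian varieties treated in \cite{Schnell-laz}. First I would apply Corollary~\ref{cor:CHM-real} to write the complexification $(M \oplus M, J_M)$ as a direct sum $\bigoplus_{j=1}^n q_j^{-1}(N_j, J_j) \tensor_{\CC} \CC_{\rho_j}$, where $q_j \colon T \to T_j$ is surjective with connected fibers onto an abelian variety $T_j$ (note that $\Supp N_j$ being projective and $T_j$ receiving no nontrivial map from the projective $\Supp N_j$ forces $T_j$ to be abelian, as in the proof of Theorem~\ref{thm:CHM}), each $(N_j, J_j)$ is a simple polarizable complex Hodge module, and each $\rho_j$ has \emph{finite order} because $M$ admits an integral structure. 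Since $\ratM \tensor_{\RR} \CC$ is a direct summand of the perverse sheaf underlying $(M \oplus M, J_M)$, which is $\bigoplus_j \ker(\ratJC_j - i\cdot\id)$, it suffices to understand the twisted cohomology $H^i(T, \ker(\ratJC_j - i\cdot\id) \tensor_{\CC} \CC_{\rho} )$ as $\rho$ varies, and then extract $S^i_m(T,M)$ as a finite Boolean combination of the resulting sets; finite unions and the dimension-jump condition $\geq m$ are preserved under passing to direct summands and sums, so no essential information is lost.

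Next, for a fixed $j$, I would use the projection formula for the proper smooth morphism $q_j$ together with the fact that $\Char(T) \to \Char(T_j)$ embeds $\Char(T_j)$ as a subtorus: twisting $q_j^{-1}(N_j)$ by a character $\rho \in \Char(T)$ and pushing to a point, the cohomology only depends on $\rho$ through its restriction to the fibers of $q_j$ (which contributes the cohomology of a torus, hence a fixed graded piece) and through a character pulled back from $T_j$. Concretely, $H^i\bigl(T, \ker(\ratJC_j - i\cdot\id)\tensor \CC_{\rho}\bigr)$ decomposes, by the Künneth-type argument for the torus bundle $q_j$, into a sum of terms $H^a(T_j, \ker(\ratJC_j - i\cdot\id)\tensor \CC_{\rho'}) \tensor H^b(\text{fiber}, \CC_{\rho''})$, where $\rho'$ is the ``horizontal'' part of $\rho$ and $\rho''$ the ``vertical'' part. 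The vertical factor is nonzero precisely on a subtorus (a coset of $\ker(\Char T \to \Char T_j)$, or rather the locus where $\rho''$ is trivial, shifted), and the horizontal factor is governed by the cohomology support loci of the Hodge module $N_j$ on the \emph{abelian variety} $T_j$.

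Finally I would invoke the main result of \cite{Schnell-laz}: for a polarizable Hodge module on an abelian variety admitting an integral structure, the cohomology support loci $S^a_m$ are finite unions of translates of linear subvarieties (subtori) by points of finite order. Here one needs that $N_j$, or rather the real Hodge module of which $\ker(\ratJC_j - i\cdot\id)$ is the underlying $\Dmod$-module, admits an integral structure — this should follow from Lemma~\ref{lem:integral-summand} combined with the fact that $M$ does and that the integral structure is transported through the isomorphism of Corollary~\ref{cor:CHM-real}, using also that the $\rho_j$ have finite order (so that $\CC_{\rho_j}$ is itself defined over a ring of algebraic integers). Pulling back the resulting torsion-translated subtori of $\Char(T_j)$ under $\Char(T) \to \Char(T_j)$, intersecting with the torsion-translated subtorus coming from the vertical (fiber) cohomology, and finally translating everything by the finite-order character $\rho_j^{-1}$ to undo the twist, yields a finite union of translates of subtori of $\Char(T)$ by points of finite order. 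Taking the finite union over $j$ and over the Künneth summands, and then forming the locus where the total dimension is $\geq m$ (a finite Boolean operation on such sets, which again produces a set of the same type), completes the proof. The main obstacle I anticipate is the bookkeeping in the second step: making the projection/Künneth decomposition for the torus bundle $q_j$ precise at the level of the endomorphism $J_j$ and the character twist, and checking that the ``finite order'' property genuinely transfers to $N_j$ through all the reductions; the abelian-variety input from \cite{Schnell-laz} and the torsion of the $\rho_j$ are the two facts doing the real work.
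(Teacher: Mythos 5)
Your overall strategy — decompose via Corollary~\ref{cor:CHM-real}, reduce to the abelian-variety result of \cite{Schnell-laz}, and pull back along $\Char(T) \to \Char(T_j)$ with a translation by the finite-order $\rho_j$ — is the same as the paper's, and the K\"unneth bookkeeping you worry about in the middle is manageable (indeed the paper leaves it implicit). But there is a genuine gap at the step you flag as ``should follow from Lemma~\ref{lem:integral-summand}'': that lemma only transports an integral structure to the summands in the decomposition of $M$ \emph{by strict support}, whereas the summands in Corollary~\ref{cor:CHM-real} are of the form $q_j^{-1}(N_j, J_j) \tensor_{\CC} \CC_{\rho_j}$, complex Hodge modules living on a \emph{different} torus $T_j$ after an untwist. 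These are not strict-support summands of $M$, so Lemma~\ref{lem:integral-summand} does not apply, and the observation that $\CC_{\rho_j}$ is ``defined over a ring of algebraic integers'' gives at best an $\mathcal{O}_K$-structure for a number field $K$, not the $\ZZ$-structure that the extension of \cite[Theorem~1.4]{Schnell-laz} actually requires (namely, that $N_{j,\CC}$ be a direct factor of the complexification of some $E \in \Dbc(\ZZ_{T_j})$).

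The paper closes this gap with a specific averaging construction: take the given integral structure $E \in \Dbc(\ZZ_T)$ on $M$, let $r$ be the order of $\rho_j$, and form $E' = \derR[r]_{\ast}\bigl([r]^{-1}E\bigr)$ where $[r]\colon T \to T$ is multiplication by $r$. Its complexification is $\bigoplus_{\rho^r = 1} E \tensor_{\ZZ} \CC_{\rho}$, which contains $E \tensor_{\ZZ} \CC_{\rho_j^{-1}}$ — and hence $q_j^{-1} N_{j,\CC}$ — as a direct factor; pushing down by $q_{j\ast}$ and taking $\shH^{-\dim q_j}$ (using that $q_j$ has connected fibers) exhibits $N_{j,\CC}$ as a direct factor of the complexification of a genuine $\ZZ$-complex on $T_j$. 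Without this multiplication-by-$r$ trick the argument does not close. A smaller slip: your parenthetical that $T_j$ ``is forced to be abelian'' is wrong — Corollary~\ref{cor:CHM-real} only gives that $T_j$ is a compact complex torus with $\Supp N_j$ projective; the abelian variety on which one finally invokes \cite{Schnell-laz} is the subtorus of $T_j$ generated by that projective support.
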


\begin{proof}
The result in question is known for abelian varieties: if $M \in \HM{A}{w}$
is a polarizable real Hodge module on an abelian variety, and if $M$ admits an
integral structure, then the sets $S_m^i(A, M)$
are finite unions of ``arithmetic subvarieties'' (= translates of linear subvarieties
by points of finite order). This is proved in \cite[Theorem~1.4]{Schnell-laz} for
polarizable rational Hodge modules, but the proof carries over unchanged to the case
of real coefficients. The same argument shows more generally that if the underlying
perverse sheaf $\ratMC$ of a polarizable real Hodge module $M \in \HM{A}{w}$ is
isomorphic to a direct factor in the complexification of some $E \in \Dbc(\ZZ_A)$,
then each $S_m^i(A, M)$ is a finite union of arithmetic subvarieties.

Now let us see how to extend this result to compact complex tori. Passing to the
underlying complex perverse sheaves in \corollaryref{cor:CHM-real}, we get
\[
	\ratMC \simeq \bigoplus_{j=1}^n \bigl( q_j^{-1} N_{j, \CC} \tensor_{\CC}
\CC_{\rho_j} \bigr);
\]
recall that $\Supp N_j$ is a projective subvariety of the complex torus $T_j$, and
that $\rho_j \in \Char(T)$ has finite order. In light of this decomposition and the
comments above, it is therefore enough to prove that each $N_{j, \CC}$ is isomorphic
to a direct factor in the complexification of some object of $\Dbc(\ZZ_{T_j})$. 

Let $E \in \Dbc(\ZZ_T)$ be some choice of integral structure on the real Hodge module
$M$; obviously $\ratMC \simeq E \tensor_{\ZZ} \CC$. Let $r \geq 1$ be the order of
the point $\rho_j \in \Char(T)$, and denote by $[r] \colon T \to T$ the finite
morphism given by multiplication by $r$. We define
\[
	E' = \derR [r]_{\ast} \bigl( [r]^{-1} E \bigr) \in \Dbc(\ZZ_T)
\]
and observe that the complexification of $E'$ is isomorphic to the direct sum of $E
\tensor_{\ZZ} \CCrho$, where $\rho \in \Char(T)$ runs over the finite set of
characters whose order divides $r$. This set includes $\rho_j^{-1}$, and so $q_j^{-1}
N_{j, \CC}$ is isomorphic to a direct factor of $E' \tensor_{\ZZ} \CC$. Because $q_j
\colon T \to T_j$ has connected fibers, this implies that
\[
	N_{j, \CC} \simeq \shH^{-\dim q_j} q_{j \ast} \bigl( q_j^{-1} N_{j, \CC} \bigr)
\]
is isomorphic to a direct factor of 
\[
	\shH^{-\dim q_j} q_{j \ast} \bigl( E' \tensor_{\ZZ} \CC \bigr).
\]
As explained in \cite[\S1.2.2]{Schnell-laz}, this is again the complexification of a
constructible complex in $\Dbc(\ZZ_{T_j})$, and so the proof is complete.
\end{proof}

\section{Generic vanishing theory}

Let $X$ be a compact K\"ahler manifold, and let $f \colon X \to T$ be a holomorphic
mapping to a compact complex torus. The main purpose of this chapter is to show that
the higher direct image sheaves $R^j \fl \omX$ have the same properties as in the
projective case (such as being GV-sheaves). As explained in the introduction, we do
not know how to obtain this using classical Hodge theory; this forces us to prove a
more general result for arbitrary polarizable complex Hodge modules.

\subsection{GV-sheaves and M-regular sheaves}
\label{par:GV-sheaves}

We begin by reviewing a few basic definitions. Let $T$ be a compact complex torus,
$\widehat{T} = \Pic^0(T)$ its dual, and $P$ the normalized Poincar\'e
bundle on the product $T \times \widehat T$. It induces an integral transform 
\[
	\derR \Phi_P \colon \Dbcoh(\OT) \to \Dbcoh(\shO_{\widehat T}), \quad
		\derR \Phi_P (\shF) = \derR {p_2}_*(p_1^* \shF \otimes P),
\] 
where $\Dbcoh(\OT)$ is the derived category of cohomologically bounded and coherent
complexes of $\OT$-modules. Likewise, we have $\derR \Psi_P \colon
\Dbcoh(\shO_{\widehat T}) \to \Dbcoh(\shO_T)$ going in the opposite direction.
An argument analogous to Mukai's for abelian varieties shows that the Fourier-Mukai
equivalence holds in this case as well \cite[Theorem~2.1]{BBP}.

\begin{theorem}\label{mukai}
With the notations above, $\derR \Phi_P$ and $\derR \Psi_P$ are equivalences of
derived categories. More precisely, one has 
\[
	\derR \Psi_P \circ \derR \Phi_P \simeq (-1)_T^* [-\dim T] \quad \text{and} \quad
 \derR \Phi_P \circ \derR \Psi_P \simeq (-1)_{\widehat T}^* [-\dim T].
\]
\end{theorem}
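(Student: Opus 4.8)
The plan is to imitate Mukai's original argument for abelian varieties, which proves that the Fourier–Mukai functor associated to the Poincaré bundle is an equivalence by directly computing the two composite integral transforms. The key point is that $\derR\Psi_P \circ \derR\Phi_P$ is again an integral transform, this time on $T \times T$, with kernel obtained by convolving the two copies of the Poincaré bundle; so everything reduces to identifying that convolution kernel.

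First I would set up the kernel calculus. For the three projections $p_{13}, p_{12}, p_{23}$ from $T \times \widehat T \times T$, one has
\[
	\derR\Psi_P \circ \derR\Phi_P (\shF) \simeq
		\derR {p_{13}}_* \bigl( p_{23}^* P_{23} \otimes p_{12}^* P_{12} \otimes \cdots \bigr),
\]
where I must be careful to use the correct Poincaré bundles on $\widehat T \times T$ versus $T \times \widehat T$ (they differ by the swap, which is where $(-1)_T^*$ will eventually enter). The composite integral transform therefore has kernel $Q = \derR {p_{13}}_* \bigl( p_{12}^* P \otimes p_{23}^* P' \bigr) \in \Dbcoh(\shO_{T \times T})$, where $P'$ is the appropriate Poincaré bundle on $\widehat T \times T$.

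Next I would compute $Q$. Restricting $p_{12}^* P \otimes p_{23}^* P'$ to a fiber $\{x\} \times \widehat T \times \{y\}$, one gets the line bundle $P_x \otimes P'_y$ on $\widehat T$, which is algebraically equivalent to zero, hence has vanishing cohomology in all degrees unless $P_x \otimes P'_y \simeq \shO_{\widehat T}$, i.e. unless $x = y$ (using $P'_y \simeq P_{-y}$ or $P_y^{-1}$ depending on conventions). By cohomology and base change — valid here because the relevant cohomology jumps only on the diagonal, where it is concentrated in the single degree $\dim T$ — one concludes that $Q$ is supported on the diagonal and that there $\derR{p_{13}}_*$ contributes $H^{\dim T}(\widehat T, \shO_{\widehat T})[-\dim T]$, which is one-dimensional. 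Tracking the twist by $(-1)_T$ coming from the asymmetry between $P$ and $P'$, this gives $Q \simeq (-1)_T^* \shO_\Delta [-\dim T]$, whence $\derR\Psi_P \circ \derR\Phi_P \simeq (-1)_T^*[-\dim T]$. The second identity follows by the same computation with the roles of $T$ and $\widehat T$ interchanged, using that the bidual of $T$ is canonically $T$.

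The main obstacle is making the base-change and support argument rigorous in the complex-analytic (non-algebraic) setting: on a general compact complex torus one does not have the full machinery of coherent duality and flat base change that one uses freely over a field, and $\widehat T = \Pic^0(T)$ must be handled as a complex torus in its own right. This is precisely the content cited as \cite[Theorem~2.1]{BBP}, so in practice I would either invoke that reference directly or observe that the line-bundle cohomology vanishing for nontrivial $P_x \otimes P'_y$ on the complex torus $\widehat T$ (a Künneth/Appell–Humbert computation) together with Grauert's base-change theorem for proper holomorphic maps is enough to run Mukai's argument verbatim. The remaining bookkeeping — signs, shifts, and the identification of the one-dimensional cohomology with the structure sheaf of the diagonal — is routine once the kernel is pinned down.
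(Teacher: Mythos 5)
Your proposal takes essentially the same route as the paper, which does not prove \theoremref{mukai} at all but simply cites \cite[Theorem~2.1]{BBP} with the remark that the argument is Mukai's adapted to the analytic setting; your sketch fills in what that adaptation looks like. One small imprecision worth flagging: at points of the (anti)diagonal the fiber cohomology $H^{i}(\widehat T, \shO_{\widehat T})$ is nonzero in \emph{every} degree $0 \le i \le \dim T$, not concentrated in degree $\dim T$, so the concentration of the kernel complex $Q$ in that single degree is a genuine step in Mukai's argument --- handled via relative duality together with the isomorphism $(\id \times (-1))^{\ast} P \simeq P^{-1}$ rather than by cohomology-and-base-change alone.
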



Given a coherent $\OT$-module $\shF$ and an integer $m\ge 1$, we define 
\[
	S^i_m(T, \shF) = \menge{L \in \Pic^0(T)}{\dim ~H^i(T, \shF \tensor_{\OT} L) \ge m}.
\]
It is customary to denote 
\[
	S^i (T, \shF) = S^i_1(T, \shF) = \menge{L \in \Pic^0(T)}{H^i(T, \shF \tensor_{\OT} L) \neq 0}.
\]
Recall the following definitions from \cite{PP3} and \cite{PP4} respectively.

\begin{definition}
A coherent $\OT$-module $\shF$ is called a \define{GV-sheaf} if the inequality
\[
	\codim_{\Pic^0(T)} S^i(T, \shF) \geq i
\]
is satisfied for every integer $i \geq 0$. It is called \define{M-regular} if the
inequality
\[
	\codim_{\Pic^0(T)} S^i(T, \shF) \geq i+1
\]
is satisfied for every integer $i \geq 1$.
\end{definition}


A number of local properties of integral transforms for complex manifolds, based only
on commutative algebra results, were proved in \cite{PP1,Popa}. For
instance, the following is a special case of \cite[Theorem~2.2]{PP1}.

\begin{theorem}\label{GV_WIT}
Let $\shF$ be a coherent sheaf on a compact complex torus $T$.
Then the following statements are equivalent:
\begin{renumerate}
\item $\shF$ is a GV-sheaf.
\item $R^i \Phi_P (\derR \Delta \shF) = 0$ for $i \neq \dim T$, where $\derR \Delta \shF : = 
\derR \shHom (\shF, \shO_T)$.
\end{renumerate}
\end{theorem}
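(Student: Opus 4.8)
The statement is a special case of \cite[Theorem~2.2]{PP1}; the plan is to reduce both conditions to statements about the single complex $\mathcal{K}^{\bullet} := \derR\Phi_P(\derR\Delta\shF) \in \Dbcoh(\shO_{\widehat T})$ and then to observe that they coincide. Write $n = \dim T$.

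\emph{Step 1: the derived fibres of $\mathcal{K}^{\bullet}$.} Since $p_2 \colon T \times \widehat T \to \widehat T$ is smooth, hence flat, cohomology and base change holds in the derived category; using that the Poincar\'e bundle restricts to $L$ along $T \times \{L\}$, one gets for every $L \in \Pic^0(T)$
\[
	\mathcal{K}^{\bullet} \Ltensor_{\shO_{\widehat T}} k(L)
		\simeq \derR\Gamma\bigl(T, \derR\shHom(\shF, \shO_T) \tensor L\bigr)
		\simeq \derR\Hom_{\shO_T}(\shF, L).
\]
As $\omega_T \simeq \shO_T$, Serre duality on the $n$-dimensional $T$ gives $\Ext^i_{\shO_T}(\shF, L) \cong H^{n-i}(T, \shF \tensor L^{-1})^{\vee}$. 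Hence $\mathcal{H}^i(\mathcal{K}^{\bullet} \Ltensor k(L))$ vanishes for $i \notin [0,n]$, and -- since $L \mapsto L^{-1}$ is an automorphism of $\widehat T$ -- for each $i$ the locus $\{\, L : \mathcal{H}^i(\mathcal{K}^{\bullet} \Ltensor k(L)) \neq 0 \,\}$ has the same codimension in $\widehat T$ as $S^{n-i}(T, \shF)$. Thus $\shF$ is a GV-sheaf if and only if, for every $i$, this locus has codimension $\geq n - i$ in $\widehat T$.

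\emph{Step 2: $(ii)\Rightarrow(i)$.} Condition (ii) says precisely that $\mathcal{K}^{\bullet} \simeq \mathcal{K}\decal{-n}$ for a single coherent sheaf $\mathcal{K} = R^n\Phi_P(\derR\Delta\shF)$ on $\widehat T$. Granting this, one computes $\mathcal{H}^i(\mathcal{K}^{\bullet} \Ltensor k(L)) = \Tor^{\shO_{\widehat T}}_{n-i}(\mathcal{K}, k(L))$, which is nonzero exactly when $n-i$ does not exceed the projective dimension of the stalk $\mathcal{K}_L$, i.e.\ when $L$ lies in the support of $\bigoplus_{p \geq n-i} \shExt^p(\mathcal{K}, \shO_{\widehat T})$. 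On the smooth variety $\widehat T$ one has $\codim \Supp \shExt^p(\mathcal{K}, \shO_{\widehat T}) \geq p$, so this locus has codimension $\geq n-i$; by Step~1 this is exactly the GV condition, and $(ii)\Rightarrow(i)$ is immediate.

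\emph{Step 3: $(i)\Rightarrow(ii)$, the main point.} Here one must show that the fibrewise codimension bounds of Step~1 force $\mathcal{K}^{\bullet}$ to be concentrated in degree $n$; this is purely local commutative algebra on $\widehat T$, and I expect it to be the real obstacle. One route: let $i_0$ be the lowest cohomological degree of $\mathcal{K}^{\bullet}$. The bottom cohomology commutes with $\Ltensor k(L)$, so $\Supp \mathcal{H}^{i_0}(\mathcal{K}^{\bullet})$ equals the locus where $\mathcal{H}^{i_0}(\mathcal{K}^{\bullet} \Ltensor k(L)) \neq 0$, which by Step~1 has some codimension $w \geq n - i_0$; localising $\mathcal{K}^{\bullet}$ at the generic point of a codimension-$w$ component of this support, passing to a minimal free complex over $\shO_{\widehat T}$ and invoking the New Intersection Theorem to bound its amplitude from below, one is forced to $i_0 = n$, hence (ii). A cleaner alternative uses Grothendieck--Serre duality for the smooth proper map $p_2$ (whose relative dualizing sheaf is trivial, since $\omega_T \simeq \shO_T$), which yields the exchange isomorphism
\[
	\derR\Phi_P(\derR\Delta\shF) \simeq (-1)_{\widehat T}^{*}\, \derR\shHom(\derR\Phi_P\shF, \shO_{\widehat T})[-n];
\]
this turns (ii) into the assertion that $\derR\shHom(\derR\Phi_P\shF, \shO_{\widehat T})$ is a coherent sheaf placed in degree $0$, which one attacks via the hyperext spectral sequence $\shExt^p(R^{-q}\Phi_P\shF, \shO_{\widehat T}) \Rightarrow \mathcal{H}^{p+q}$. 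Either way the content is \cite[Theorem~2.2]{PP1} (over abelian varieties, Hacon's base-change argument in \cite[\S4]{Hacon}); the passage to a compact complex torus introduces nothing new, since Serre duality, the Fourier--Mukai equivalence (\theoremref{mukai}), and all the commutative algebra are available on $\widehat T = \Pic^0(T)$ verbatim.
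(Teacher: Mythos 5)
Your proposal matches the paper's approach exactly: the paper gives no independent proof, but simply notes that the statement is a special case of \cite[Theorem~2.2]{PP1} and that the only new input needed for compact complex tori (as opposed to abelian varieties) is the Grothendieck--Serre duality isomorphism \eqref{duality}, together with the Fourier--Mukai equivalence of \theoremref{mukai} and base change -- precisely the three ingredients you identify in Steps~1 and 3, with the hard commutative-algebra content deferred to \cite{PP1}. Your Step~1 reduction to derived fibres via base change and Serre duality, and your Step~2 argument for $(ii)\Rightarrow(i)$ via Auslander--Buchsbaum and $\codim\Supp\shExt^p(\mathcal{K},\shO)\geq p$, are both correct.

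One small caution on Route~1 of Step~3: after localising at the generic point $\eta$ of a codimension-$w$ component of $\Supp\mathcal{H}^{i_0}(\mathcal{K}^\bullet)$, the resulting minimal free complex $F^\bullet$ over $R=\shO_{\widehat T,\eta}$ has $\mathcal{H}^{i_0}(F^\bullet)$ of finite length, but nothing you have established controls the length of $\mathcal{H}^j(F^\bullet)$ for $j>i_0$; the New Intersection Theorem in its usual form requires \emph{all} cohomology of the complex to have finite length, so it cannot be invoked directly. Your Route~2 (dualize via \eqref{duality} and run the hyperext spectral sequence $\shExt^p(R^{-q}\Phi_P\shF,\shO_{\widehat T})\Rightarrow\mathcal{H}^{p+q}$, using $\codim\Supp\shExt^p\geq p$ to kill the negative-degree abutment) is closer to what actually happens in \cite[Theorem~2.2]{PP1}. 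Since both you and the paper ultimately cite \cite{PP1} for this step, this does not constitute a gap in the proposal as a whole, but the Route~1 heuristic should not be presented as a self-contained argument.
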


Note that this statement was inspired by work of Hacon \cite{Hacon} in the projective
setting.  In the course of the proof of \theoremref{GV_WIT}, and also for some of
the results below, the following consequence of Grothendieck duality for compact
complex manifolds is needed; see the proof of \cite[Theorem~2.2]{PP1}, and especially
the references there. 
\begin{equation}\label{duality}
	\derR \Phi_P (\shF) \simeq 
		\derR \Delta \bigl( \derR \Phi_{P^{-1}} (\derR \Delta \shF) [\dim T] \bigr). 
\end{equation}

In particular, if $\shF$ is a GV-sheaf, 
then if we denote $\hat{\shF} : = R^{\dim T} \Phi_{P^{-1}} (\derR \Delta \shF)$,
\theoremref{GV_WIT} and \eqref{duality} imply that
\begin{equation}\label{basic}
 \derR \Phi_P (\shF) \simeq \derR \shHom(\hat{\shF}, \shO_{\hat A}).
 \end{equation}
As in \cite[Proposition~2.8]{PP2}, $\shF$ is $M$-regular if and only if
$\hat{\shF}$ is torsion-free.  

The fact that \theoremref{mukai}, \theoremref{GV_WIT} and ($\ref{basic}$) hold for
arbitrary compact complex tori allows us to deduce important properties of
GV-sheaves in this setting. Besides these statements, the proofs only rely on local
commutative algebra and base change, and so are completely analogous to those for
abelian varieties; we will thus only indicate references for that case. 

\begin{proposition} \label{sliding}
Let $\shF$ be a GV-sheaf on $T$. 
\begin{aenumerate}
\item One has $S^{\dim T} (T, \shF) \subseteq \cdots \subseteq S^1 (T, \shF)
\subseteq S^0 (T, \shF) \subseteq \widehat T$.
\item If $S^0(T, \shF)$ is empty, then $\shF = 0$.
\item If an irreducible component $Z \subseteq S^0 (T, \shF)$ has codimension $k$ in
$\Pic^0(X)$, then $Z \subseteq S^k (T, \shF)$, and hence $\dim \Supp \shF \geq k$.
\end{aenumerate}
\end{proposition}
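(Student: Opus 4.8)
The plan is to push all three statements over to the dual torus $\widehat T$ by means of the Fourier--Mukai transform $\derR\Phi_P$, where they turn into statements about the projective dimension and depth of one coherent sheaf, exactly as in the abelian-variety case \cite{PP3}. The inputs I use are already available in the K\"ahler setting: the equivalence \theoremref{mukai}, the characterization \theoremref{GV_WIT}, and above all the duality isomorphism \eqref{basic}, which for a GV-sheaf $\shF$ identifies $\derR\Phi_P(\shF)$ with $\derR\shHom(\shFh,\shO_{\widehat T})$ for a genuine coherent sheaf $\shFh$ on $\widehat T$.

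Write $g=\dim T$. The first step is the base-change identity
\[
	H^i(T,\shF\tensor\alpha)\;\cong\;\mathcal{H}^i\bigl(\derR\Phi_P(\shF)\Ltensor_{\shO_{\widehat T}}k(\alpha)\bigr),\qquad \alpha\in\widehat T=\Pic^0(T),
\]
coming from derived base change for the projection $p_2\colon T\times\widehat T\to\widehat T$ and the fact that $P$ restricts to $\alpha$ on $T\times\{\alpha\}$. Inserting \eqref{basic} and working in the regular local ring $R=\shO_{\widehat T,\alpha}$ of dimension $g$, with residue field $k$ and $N=\shFh_\alpha$, the right-hand side becomes $\mathcal{H}^i(\derR\Hom_R(N,R)\Ltensor_R k)$. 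Since $R$ is regular, $N$ is perfect, so this complex is isomorphic in the derived category to $\derR\Hom_R(N,k)$, with cohomology $\Ext^i_R(N,k)$; and $\dim_k\Ext^i_R(N,k)=\dim_k\Tor^R_i(N,k)$, which for $N\neq0$ is nonzero precisely for $0\le i\le\operatorname{pd}_RN$. Hence
\[
	S^i(T,\shF)=\bigl\{\,\alpha\in\widehat T \;:\; \shFh_\alpha\neq0 \text{ and } \operatorname{pd}_{\shO_{\widehat T,\alpha}}\shFh_\alpha\ge i\,\bigr\},
\]
and in particular $S^0(T,\shF)=\Supp\shFh$.

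Granting this description, the three statements are short. For (a): $\alpha\in S^{i+1}(T,\shF)$ means $\shFh_\alpha\neq0$ and $\operatorname{pd}_R\shFh_\alpha\ge i+1>i$, hence $\alpha\in S^i(T,\shF)$; and $S^0(T,\shF)\subseteq\widehat T$ by definition. For (b): $S^0(T,\shF)=\emptyset$ forces $\Supp\shFh=\emptyset$, so $\shFh=0$, so $\derR\Phi_P(\shF)\simeq\derR\shHom(0,\shO_{\widehat T})=0$, and then $\shF=0$ by the Fourier--Mukai equivalence \theoremref{mukai}. For (c): let $Z$ be an irreducible component of $S^0(T,\shF)=\Supp\shFh$ of codimension $k$, and pick a point $\alpha\in Z$ lying on no other component of $\Supp\shFh$; then the germ of $\Supp\shFh$ at $\alpha$ is the germ of $Z$, so $\dim_R\shFh_\alpha=g-k$, whence $\depth_R\shFh_\alpha\le g-k$ and, by Auslander--Buchsbaum, $\operatorname{pd}_R\shFh_\alpha\ge k$. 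Thus $\alpha\in S^k(T,\shF)$; since $S^k(T,\shF)$ is closed by upper semicontinuity of cohomology and contains a dense subset of $Z$, we conclude $Z\subseteq S^k(T,\shF)$. Finally, $S^k(T,\shF)\neq\emptyset$ yields a point $\alpha$ with $H^k(T,\shF\tensor\alpha)\neq0$, so $\dim\Supp\shF\ge k$ by Grothendieck vanishing.

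The main obstacle is the local computation turning $S^i(T,\shF)$ into a projective-dimension locus of $\shFh$: one has to be careful with the derived base-change isomorphism for $\derR\Phi_P$ in the analytic category, with the identification $\derR\Hom_R(N,R)\Ltensor_R k\simeq\derR\Hom_R(N,k)$ (valid because $N$ is perfect over the regular ring $R$), and with the fact that the nonvanishing range of $\Tor^R_\bullet(N,k)$ has no interior gaps --- equivalently, one invokes Auslander--Buchsbaum for the top $\Tor$ and minimality of free resolutions for the intermediate ones. Once \eqref{basic} and \theoremref{mukai} are in hand, everything else is formal, which is precisely why the text can get away with only citing the abelian-variety references.
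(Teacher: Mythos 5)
Your argument is correct and takes essentially the same route as the paper, which simply defers to \cite[Prop.~3.14]{PP3} and \cite[Lemmas~1.8, 1.12]{Pareschi} with the remark that once \theoremref{mukai}, \theoremref{GV_WIT}, and \eqref{basic} are in place, everything reduces to local commutative algebra and base change exactly as in the abelian case. Your consolidation of all three parts into the single description $S^i(T,\shF)=\{\alpha:\shFh_\alpha\neq 0,\ \operatorname{pd}_{\shO_{\widehat T,\alpha}}\shFh_\alpha\geq i\}$ (via perfectness over the regular local ring and Auslander--Buchsbaum) is a clean packaging of what those references do.
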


\begin{proof}
For (a), see \cite[Proposition~3.14]{PP3}; for (b), see \cite[Lemma~1.12]{Pareschi};
for (c), see \cite[Lemma~1.8]{Pareschi}.
\end{proof}


\subsection{Higher direct images of dualizing sheaves}

Saito \cite{Saito-Kae} and Takegoshi \cite{Takegoshi} have extended to K\"ahler
manifolds many of the fundamental theorems on higher direct images of canonical
bundles proved by Koll\'ar for smooth projective varieties. The following theorem
summarizes some of the results in \cite[p.390--391]{Takegoshi} in the special case
that is needed for our purposes.

\begin{theorem}[Takegoshi] \label{takegoshi}
Let $f \colon X \to Y$ be a proper holomorphic mapping from a compact K\"ahler
manifold to a reduced and irreducible analytic space, and let $L \in \Pic^0(X)$ be a
holomorphic line bundle with trivial first Chern class. 
\begin{aenumerate}
\item The Leray spectral sequence
\[
	E^{p,q}_2 = H^p \bigl( Y, R^q \fl(\omX \tensor L) \bigr) \Longrightarrow
		 H^{p+q}(X, \omega_X \tensor L)
\]
degenerates at $E_2$.
\item If $f$ is surjective, then $R^q \fl(\omX \tensor L)$ is torsion free for every
$q\ge 0$; in particular, it vanishes for $q > \dim X - \dim Y$. 
\end{aenumerate}
\end{theorem}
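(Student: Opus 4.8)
The plan is to realize $\omX\tensor L$ as the lowest piece of the Hodge filtration of a polarizable Hodge module and then run Koll\'ar's argument with Saito's theory --- in the proper K\"ahler generality of \cite{Saito-Kae}, the same input already used for \eqref{eq:Saito} --- in place of vanishing theorems. Since $c_1(L)=0$, the line bundle $L$ is Hermitian flat, so the flat bundle $(L,\nabla)$ underlies a polarizable complex variation of Hodge structure of weight $0$ and rank $1$ on $X$; equivalently it is the underlying filtered regular holonomic $\Dmod$-module of a polarizable complex Hodge module $M_L$ on $X$ of weight $\dim X$, with $F_{-1}=0$ and $F_0=(L,\nabla)$. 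Thus $\omX\tensor L=\omX\tensor F_0\Mmod_L$ plays exactly the role of $\omX$ for the trivial coefficient system, and \eqref{eq:Saito} has an evident twisted analogue.

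\textbf{Part (a).} First I would apply Saito's direct image theorem to $M_L$. It gives the strictness of the filtered direct image $\fp(\Mmod_L,F_\bullet)$ together with a splitting of the direct image into its cohomology objects in the filtered derived category; extracting the lowest Hodge piece and using the twisted form of \eqref{eq:Saito}, this yields a splitting
\[
	\derR\fl(\omX\tensor L)\simeq\bigoplus_{q\ge 0}R^q\fl(\omX\tensor L)\decal{-q}
\]
in $\Dbcoh(\OY)$, the twisted analogue of Koll\'ar's decomposition of $\derR\fl\omX$. From this the Leray spectral sequence degenerates at $E_2$ for purely formal reasons, since the splitting forces $H^k(X,\omX\tensor L)\simeq\bigoplus_{p+q=k}H^p\bigl(Y,R^q\fl(\omX\tensor L)\bigr)$. (Alternatively, Takegoshi proves this splitting, and hence (a), directly by harmonic theory, which is what is needed when $L$ is not of finite order.) The same strictness identifies $R^q\fl(\omX\tensor L)$ with the lowest Hodge piece $\omY\tensor F_{p_q}\Mmod_q$ of the pure Hodge module $\shH^q\fp M_L$, which is what turns (b) into a statement about Hodge modules.

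\textbf{Part (b).} For the torsion-freeness I would follow Koll\'ar's strategy. Because $f$ is surjective, its general fibre has dimension $\dim X-\dim Y$, so $R^q\fl(\omX\tensor L)$ vanishes over a dense Zariski-open subset of $Y$ once $q>\dim X-\dim Y$; a torsion-free coherent sheaf on the reduced irreducible space $Y$ that vanishes on a dense open set vanishes identically, so the ``in particular'' clause follows once torsion-freeness is known. To prove torsion-freeness I would argue by descending induction on $\dim Y$, reducing it, for a general point of an arbitrary prime divisor $D\subseteq Y$, to the injectivity of the restriction map onto the cohomology of the fibre over $D$ together with the $E_2$-degenerations of part (a) for $X$ and for the preimage of $D$. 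The injectivity in question is the K\"ahler analogue of Koll\'ar's injectivity theorem --- injectivity of $H^q(X,\omX\tensor L)\to H^q\bigl(X,\omX\tensor L\tensor\shO_X(E)\bigr)$ for every effective divisor $E$ --- whose harmonic-theoretic proof is the core of \cite{Takegoshi}, and whose Hodge-module incarnation is the strictness of $F_\bullet$ together with the torsion-freeness of the lowest Hodge piece of a pure Hodge module with strict support $Y$.

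\textbf{Main obstacle.} The hard part is the torsion-freeness in (b). In the projective case Koll\'ar has Serre vanishing, hyperplane sections and the decomposition theorem freely available; over an arbitrary compact K\"ahler $X$ these are gone and must be replaced by $L^2$-Hodge theory on Zariski-open --- hence non-compact --- pieces of $X$, the crucial ingredient being an injectivity/extension estimate of Ohsawa--Takegoshi type that carries Koll\'ar's geometric induction through without projectivity. If one restricts to $L$ of finite order in $\Pic^0(X)$, the difficulty evaporates: $M_L$ is then of geometric origin, Saito's K\"ahler decomposition theorem \cite{Saito-Kae} applies verbatim, and (b) reduces to the torsion-freeness of lowest Hodge pieces exactly as in Saito's proof of the Koll\'ar conjectures; it is only the passage to an honest Hermitian flat line bundle --- the generality the generic-vanishing applications actually need --- that forces in the analytic input, which is why Takegoshi's theorem in the stated form is the right thing to quote.
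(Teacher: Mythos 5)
The paper does not prove \theoremref{takegoshi} at all: it is a direct quotation of results from Takegoshi's paper (with the page reference \cite[p.390--391]{Takegoshi} given explicitly), and there is no \texttt{proof} environment for it. So your proposal is not a reconstruction of an argument the paper gives; it is an attempt to re-derive a cited input from scratch. Reviewing it on its own terms:

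Your sketch is self-aware but has a genuine gap, and the gap is exactly the one you flag at the end. The filtered direct image theorem you invoke in Part~(a) is, in the form the paper has available for complex Hodge modules (\S B.3), stated only for \emph{projective} morphisms between complex manifolds; the K\"ahler extension in \cite{Saito-Kae} that the paper actually relies on applies only to Hodge modules \emph{of geometric origin}, as the paper itself stresses when discussing the decomposition theorem. The rank-one complex Hodge module $M_L$ attached to an arbitrary $L\in\Pic^0(X)$ is of geometric origin only when $L$ has finite order, so your appeal to Saito to produce the splitting
\[
	\derR\fl(\omX\tensor L)\simeq\bigoplus_{q\ge 0}R^q\fl(\omX\tensor L)\decal{-q}
\]
and hence the $E_2$-degeneration and the torsion-freeness, does not cover the generality required. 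Your parenthetical fallback to Takegoshi's harmonic theory, and your closing paragraph, concede this and in effect reduce your proof to the same citation the paper makes --- which is fine as a commentary but does not constitute an independent argument. A smaller imprecision in Part~(a): strictness of the filtered direct image is not the same statement as the splitting of $\derR\fl(\omX\tensor L)$ in the derived category; the paper records the splitting as a separate theorem of Saito (the statement immediately after \theoremref{takegoshi}), and in Saito's work it is a further consequence of the polarization, not of strictness alone. If you want to justify the $E_2$-degeneration from strictness only, you need a different argument.

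So: the overall shape of your reasoning is sound and your diagnosis of where Hodge modules suffice ($L$ torsion) versus where Takegoshi's analysis is indispensable (general Hermitian flat $L$) is correct and useful, but your proposal is a sketch with an acknowledged hole rather than a proof, whereas the paper simply treats the statement as a black box from \cite{Takegoshi}.
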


Saito \cite{Saito-Kae} obtained the same results in much greater generality, using the
theory of Hodge modules. In fact, his method also gives the splitting of the complex
$\derR \fl \omX$ in the derived category, thus extending the main result of
\cite{Kollar} to all compact K\"ahler manifolds.

\begin{theorem}[Saito]
Keeping the assumptions of the previous theorem, one has
\[
	\derR \fl \omX \simeq \bigoplus_j \bigl( R^j \fl \omX \bigr) \decal{-j}
\]
in the derived category $\Dbcoh(\OY)$.
\end{theorem}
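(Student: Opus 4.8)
The plan is to deduce the splitting from Saito's K\"ahler version of the decomposition theorem \cite{Saito-Kae}, of which this statement is essentially a corollary. The point of departure is that $\omX$ is a graded piece of the Hodge-filtered de Rham complex of the \emph{pure} polarizable Hodge module $\RR_X^H \decal{\dim X} \in \HM{X}{\dim X}$, whose underlying filtered regular holonomic $\Dmod$-module is $(\OX, F_\bullet \OX)$ with $\gr_k^F \OX = 0$ for $k \neq 0$: writing $n = \dim X$, one has $\gr^F_{-n} \DR(\OX) \simeq \omX$ concentrated in cohomological degree $0$, so that $\derR \fl \omX \simeq \fl\bigl( \gr^F_{-n} \DR(\OX) \bigr)$. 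The polarization is furnished by a K\"ahler class in $H^2(X, \RR) \cap H^{1,1}(X)$, which is exactly why real (rather than rational) coefficients are needed here. Since the assertion is local on the base, and since near any point of $Y$ one can choose a closed embedding of $Y$ into an open subset of some $\CC^N$ and replace $f$ by its composition with this embedding --- still a proper K\"ahler morphism, as $X$ is compact K\"ahler --- \lemmaref{lem:Kashiwara} lets us reduce to the case where the target is a complex manifold. (Alternatively, one works directly in Saito's formalism of Hodge modules on analytic spaces, where everything below holds verbatim.)

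Next I would invoke Saito's decomposition theorem for polarizable pure Hodge modules under a proper K\"ahler morphism \cite{Saito-Kae}: the filtered direct image $\fp(\OX, F_\bullet \OX)$ is strict, each $\shH^j \fp(\OX, F_\bullet \OX)$ is the filtered $\Dmod_Y$-module underlying a pure Hodge module $\shH^j \fl\bigl( \RR_X^H \decal{n} \bigr) \in \HM{Y}{n+j}$, and there is a decomposition
\[
	\fp(\OX, F_\bullet \OX) \;\simeq\; \bigoplus_j \shH^j \fp(\OX, F_\bullet \OX) \decal{-j}
\]
underlying a decomposition of $\fl\bigl( \RR_X^H \decal{n} \bigr)$ in the derived category of polarizable real Hodge modules on $Y$. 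This is the one serious ingredient; it rests on relative hard Lefschetz with respect to a relative K\"ahler form together with the semi-simplicity of the category of polarizable Hodge modules, and it is precisely the part of \cite{Saito-Kae} that goes beyond the projective case treated in \cite{Saito-MHM}. I expect this to be the main obstacle --- or rather, the point at which one must simply cite Saito, since reproving it is out of scope.

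Finally I would pass to the lowest graded piece of the Hodge filtration. Because $\fp(\OX, F_\bullet)$ is strict and de Rham commutes with direct image, the functor $\gr^F_{-n} \DR_Y(\argbl)$ intertwines $\fp$ with $\derR \fl$; applying it to the displayed decomposition gives
\[
	\derR \fl \omX \;\simeq\; \bigoplus_j \gr^F_{-n} \DR_Y\bigl( \shH^j \fp(\OX, F_\bullet \OX) \bigr) \decal{-j}.
\]
By Saito's results on the behaviour of the Hodge filtration under proper direct images --- the lowest nonzero index of $F_\bullet$ on $\shH^j \fp(\OX, F_\bullet \OX)$ is at least $-(\dim X - \dim Y)$ --- each summand $\gr^F_{-n} \DR_Y\bigl( \shH^j \fp(\OX, F_\bullet \OX) \bigr)$ reduces to a single coherent $\OY$-module concentrated in cohomological degree $0$, namely $\omega_Y \tensor F_{p}\bigl( \shH^j \fp \OX \bigr) = R^j \fl \omX$ in the notation of \eqref{eq:Saito}. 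Substituting this back yields $\derR \fl \omX \simeq \bigoplus_j \bigl( R^j \fl \omX \bigr) \decal{-j}$ in $\Dbcoh(\OY)$, which extends the main result of \cite{Kollar} to the K\"ahler setting. (One could instead try to obtain the splitting from Takegoshi's $E_2$-degeneration in \theoremref{takegoshi} by checking the vanishing of the obstruction classes for the Leray truncations \`a la Koll\'ar, but the Hodge-module route above is what Saito actually carries out, and is cleaner.)
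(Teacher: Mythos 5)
Your proposal is correct and follows essentially the same route as the paper, whose proof is the one-line deferral ``Given \cite{Saito-Kae}, the proof in \cite{Saito-K} goes through'' --- you have simply unpacked what that deferral means: reduce to a manifold target, invoke Saito's K\"ahler decomposition theorem for the filtered direct image of $(\OX, F_\bullet\OX)$, and pass to the lowest graded piece of the Hodge-filtered de Rham complex using strictness and the identification \eqref{eq:Saito}. The only point worth flagging is that the decomposition theorem from \cite{Saito-Kae} is, as the paper itself notes elsewhere, established only for Hodge modules of geometric origin in the K\"ahler setting; this causes no trouble here because $\RR_X\decal{\dim X}$ is geometric, but it is worth saying explicitly if one writes the argument out.
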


\begin{proof}
Given \cite{Saito-Kae}, the proof in \cite{Saito-K} goes through under the assumption
that $X$ is a compact K\"ahler manifold.
\end{proof}

\subsection{Euler characteristic and M-regularity}

In this section, we relate the Euler characteristic of a simple polarizable complex
Hodge module on a compact complex torus $T$ to the M-regularity of the associated
graded object.

\begin{lemma} \label{lem:M-regular}
Let $(M, J) \in \HMC{T}{w}$ be a simple polarizable complex Hodge module on a compact
complex torus. If $\Supp M$ is projective and $\chi(T, M, J) > 0$, then the coherent
$\OT$-module $\gr_k^F \Mmod'$ is M-regular for every $k \in \ZZ$.
\end{lemma}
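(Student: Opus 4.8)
The plan is to reduce the statement to a known result from \cite{PS} about Euler characteristics and M-regularity on abelian varieties, using the structure theorem \theoremref{thm:CHM} to transport the situation from the torus $T$ to an abelian variety. Since $(M, J) \in \HMC{T}{w}$ is simple with projective support, after translating we may assume $\Supp M$ is not contained in a proper subtorus; \theoremref{thm:CHM} then provides a surjective morphism $q \colon T \to A$ with connected fibers onto an abelian variety $A$, a simple $(N, K) \in \HMC{A}{w - \dim q}$ with $\chi(A, N, K) > 0$, and a unitary character $\rho \in \Char(T)$ such that $(M, J) \simeq q^{-1}(N, K) \tensor_{\CC} \CCrho$. (The case where $\Supp M$ sits in a proper subtorus is handled by Kashiwara's equivalence, replacing $T$ by that subtorus — though one must check M-regularity is detected correctly after pushforward, which is routine since $S^i$ behaves well under the closed embedding.) On the level of filtered $\Dmod$-modules, this means $\Mmod' \simeq \qu \Nmod' \tensor_{\OT} (L, \nabla)$ with the induced filtration, where $(L, \nabla)$ is the flat line bundle of $\CCrho$; hence $\gr_k^F \Mmod' \simeq \qu(\gr_k^F \Nmod') \tensor_{\OT} L$ as coherent sheaves, with $L \in \Pic^0(T)$.

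The key input from \cite{PS} — which the excerpt explicitly flags in the introduction (``The new result (in \lemmaref{lem:M-regular}) is that each $\gr_k^F \Mmod$ is actually M-regular, provided that $\chi(A, \Mmod) > 0$'') — is that on the abelian variety $A$, since $(N, K)$ is a polarizable complex Hodge module with $\chi(A, N, K) > 0$, each $\gr_k^F \Nmod'$ is M-regular on $A$. So the remaining work is purely formal: I must show M-regularity is preserved under (i) tensoring by $L \in \Pic^0(T)$, and (ii) pullback $\qu$ along a surjective morphism with connected fibers. For (i), tensoring by $L$ just translates the loci $S^i$ inside $\Pic^0(T)$, so codimensions are unchanged. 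For (ii), if $q \colon T \to A$ is surjective with connected fibers, then $\widehat{q} \colon \widehat{A} \to \widehat{T} = \Pic^0(T)$ is a closed embedding of abelian varieties, and by the projection formula together with $Rq_* \OT$ being concentrated in degrees $0, \dots, \dim q$ (here one uses that the fibers are tori, and Takegoshi/Kollár-type splitting, or more simply flatness and cohomology-and-base-change), one gets $H^i(T, \qu \shG \tensor q^* \alpha) \cong \bigoplus_{a+b=i} H^a(A, \shG \tensor \alpha) \tensor H^b(\text{fiber})$ for $\alpha \in \widehat{A}$, while twisting by a character $\beta \in \Pic^0(T)$ \emph{not} pulled back from $A$ forces the cohomology to vanish (the restriction of $\beta$ to a fiber is a nontrivial flat line bundle on a torus). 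Thus $S^i(T, \qu \shG) = \widehat{q}\bigl(\bigcup_{a+b=i, b \le \dim q} S^a(A, \shG)\bigr)$, and an easy codimension count using $\operatorname{codim}_{\widehat{A}} S^a(A, \shG) \ge a+1$ for $a \ge 1$ and $\dim \widehat{A} = \dim \widehat{T} - \dim q$ yields $\operatorname{codim}_{\widehat{T}} S^i(T, \qu\shG) \ge i+1$ for $i \ge 1$. Applying this with $\shG = \gr_k^F \Nmod'$ finishes the proof.

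The main obstacle I anticipate is the precise bookkeeping in step (ii): one needs the Künneth-type decomposition of $H^i(T, \qu\shG \tensor \beta)$ to hold with the correct vanishing for characters transverse to $q^*\Pic^0(A)$, and to handle the range of cohomological degrees coming from the fiber (which is a torus of dimension $\dim q$, contributing $H^b$ for $0 \le b \le \dim q$). The codimension inequality then needs the $k=0$ term ($S^0(A,\shG)$, where M-regularity gives no information) to be absorbed: but $S^0(T,\qu\shG)$ appears only in $S^i$ for $i \le \dim q$ paired with $H^b$ of the fiber, and the bound $\operatorname{codim} S^i \ge i+1$ for $i \ge 1$ still follows because the contribution $S^0(A,\shG) = \widehat{q}(S^0)$ has codimension $\ge \dim q \ge i$ only when forced, so one must check this case separately — it works because a component of $S^i(T,\qu\shG)$ coming from $S^0(A,\shG) \times H^b(\text{fiber})$ with $b = i$ lies in $\widehat{q}(\widehat{A})$, which already has codimension $\dim q \ge i > i - 1$; when $i \le \dim q$ this could be exactly $i$, not $i+1$, so one needs instead that $S^0(A, \shG) \subsetneq \widehat{A}$ (true unless $\shG = 0$, since $\shG$ has projective support inside $A$ and is a nonzero GV-sheaf by \cite{PS}), giving the extra $+1$. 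I would verify this edge case carefully, but it is the only genuinely delicate point; everything else is a direct citation of \cite{PS} plus standard Fourier–Mukai formalism already set up in \parref{par:GV-sheaves}.
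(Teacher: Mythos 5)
Your reduction to an abelian variety is correct in spirit (the paper does this more directly: since $\Supp M$ is projective it lies in a translate of an abelian subvariety $A \subseteq T$, and Kashiwara's equivalence \lemmaref{lem:Kashiwara} lets one replace $T$ by $A$ outright — no need to invoke \theoremref{thm:CHM}, and indeed if $\Supp M$ is projective and generates $T$ then $T$ is already abelian, so \theoremref{thm:CHM} would give the identity map and tell you nothing). But after that reduction your argument is circular at exactly the crucial step. You cite \cite{PS} for the claim that $\chi(A, N, K) > 0$ implies each $\gr_k^F \Nmod'$ is M-regular on $A$. That statement is not in \cite{PS}; it is precisely what Lemma~\ref{lem:M-regular} asserts (specialized to an abelian variety). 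You have misread the introductory sentence you quote: ``The new result (in Lemma~\ref{lem:M-regular}) is that each $\gr_k^F \Mmod$ is actually M-regular, provided that $\chi(A, \Mmod) > 0$'' is flagging this implication as \emph{new to this paper}, not as something taken from \cite{PS}. What \cite{PS} gives is only $\chi(A, \Mmod) \geq 0$ and that each $\gr_k^F \Mmod$ is a GV-sheaf.

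The missing content is therefore the entire heart of the proof: on the abelian variety, one uses the results of \cite{Schnell-holo} (Theorem~2.2 and Corollary~20.5 there) that for a \emph{simple} holonomic $\Dmod$-module $\Mmod'$ with $\chi(A, \Mmod') > 0$, the cohomology jump loci $\{\rho \in \Char(A) : H^i(A, \DR(\Mmod') \tensor \CC_\rho) \neq 0\}$ have codimension $\geq 2i + 2$ in $\Char(A)$ for $i \geq 1$. One then restricts to the subgroup of unitary characters, identified with $\Pic^0(A)$; because this halves the codimension, the bound becomes $\geq i+1$. Strictness of the Hodge filtration (valid since the twist by a unitary character is still a polarizable complex Hodge module, by \lemmaref{lem:twist}) lets one pass from $\DR(\Mmod')$ to $\gr_k^F \DR(\Mmod')$, and finally an induction on $k$ using triviality of $\Omega^1_A$ (as in \cite{PS}) transfers the codimension bound from $\gr_k^F \DR(\Mmod')$ to $\gr_k^F \Mmod'$. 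None of this is present in your proposal, so the proposal as written does not prove the lemma. The bookkeeping about pullback along $q$ and twisting by $L$ that you worry about at length is, by contrast, unnecessary once you have reduced to the abelian variety via Kashiwara.
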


\begin{proof}
$\Supp M$ is projective, hence contained in a translate of an abelian subvariety $A
\subseteq T$; because \lemmaref{lem:Kashiwara} holds for polarizable complex Hodge
modules, we may therefore assume without loss of generality that $T = A$ is an
abelian variety.

As usual, let $\Mmod = \Mmod' \oplus \Mmod'' = \ker(J - i \cdot \id) \oplus \ker(J +
i \cdot \id)$ be the decomposition into eigenspaces. The summand $\Mmod'$ is a simple
holonomic $\Dmod$-module with positive Euler characteristic on an abelian variety,
and so \cite[Theorem~2.2 and Corollary~20.5]{Schnell-holo} show that
\begin{equation} \label{eq:CSL}
	\menge{\rho \in \Char(A)}%
		{H^i \bigl( A, \DR(\Mmod') \tensor_{\CC} \CCrho \bigr) \neq 0}
\end{equation}
is equal to $\Char(A)$ when $i = 0$, and is equal to a finite union of translates of
linear subvarieties of codimension $\geq 2i+2$ when $i \geq 1$. 

We have a one-to-one correspondence between $\Pic^0(A)$ and the subgroup of unitary
characters in $\Char(A)$; it takes a unitary character $\rho \in \Char(A)$ to the
holomorphic line bundle $L_{\rho} = \CCrho \tensor_{\CC} \OA$. If $\rho \in \Char(A)$ is
unitary, the twist $(M, J) \tensor_{\CC} \CCrho$ is still a polarizable complex Hodge
module by \lemmaref{lem:twist}, and so the complex computing its hypercohomology is
strict. It follows that 
\[
	H^i \bigl( A, \gr_k^F \DR(\Mmod') \tensor_{\OA} L_{\rho} \bigr)
		\quad \text{is a subquotient of} \quad
		H^i \bigl( A, \DR(\Mmod') \tensor_{\CC} \CCrho \bigr).
\]
If we identify $\Pic^0(A)$ with the subgroup of unitary characters, this means that
\[
	\menge{L \in \Pic^0(A)}%
		{H^i \bigl( A, \gr_k^F \DR(\Mmod') \tensor_{\OA} L \bigr) \neq 0}
\]
is contained in the intersection of \eqref{eq:CSL} and the subgroup of unitary
characters. When $i \geq 1$, this intersection is a finite union of translates of
subtori of codimension $\geq i+1$; it follows that
\[
	\codim_{\Pic^0(A)} \menge{L \in \Pic^0(A)}%
		{H^i \bigl( A, \gr_k^F \DR(\Mmod') \tensor_{\OA} L \bigr) \neq 0} \geq i+1.
\]
Since the cotangent bundle of $A$ is trivial, a simple induction on $k$ as in the
proof of \cite[Lemma~1]{PS} gives
\[
	\codim_{\Pic^0(A)} \menge{L \in \Pic^0(A)}
		{H^i \bigl( A, \gr_k^F \Mmod' \tensor_{\OA} L \bigr) \neq 0} \geq i+1,
\]
and so each $\gr_k^F \Mmod'$ is indeed M-regular.
\end{proof}

\begin{note}
In fact, the result still holds without the assumption that $\Supp M$ is projective;
this is an easy consequence of the decomposition in \eqref{eq:decomposition}.
\end{note}

\subsection{Chen-Jiang decomposition and generic vanishing}
\label{par:ChenJiang}

Using the decomposition in \theoremref{thm:CHM-main} and the result of the previous
section, we can now prove the most general version of the generic vanishing theorem,
namely \theoremref{thm:Chen-Jiang} in the introduction.

\begin{proof}[Proof of  \theoremref{thm:Chen-Jiang}]
We apply \theoremref{thm:CHM-main} to the complexification $(M \oplus M, J_M) \in
\HMC{T}{w}$. Passing to the associated graded in \eqref{eq:decomposition}, we obtain
a decomposition of the desired type with $\shF_j = \gr_k^F \Nmod_j'$ and $L_j =
\CC_{\rho_j} \tensor_{\CC} \OT$, where
\[
	\Nmod_j = \Nmod_j' \oplus \Nmod_j'' = \ker(J_j - i \cdot \id) \oplus
		\ker(J_j + i \cdot \id)
\]
is as usual the decomposition into eigenspaces of $J_j \in \End(\Nmod_j)$. Since
$\Supp N_j$ is projective and $\chi(T_j, N_j, J_j) > 0$, we conclude from
\lemmaref{lem:M-regular} that each coherent $\shO_{T_j}$-module $\shF_j$ is M-regular.
\end{proof}

\begin{corollary}\label{cor:MHM-GV}
If $M = (\Mmod, F_{\bullet} \Mmod, \ratM) \in \HM{T}{w}$, then for every $k \in \ZZ$,
the coherent $\OT$-module $\gr_k^F \Mmod$ is a GV-sheaf.
\end{corollary}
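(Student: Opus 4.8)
The plan is to derive the GV property as an immediate consequence of the Chen--Jiang decomposition just established in \theoremref{thm:Chen-Jiang}, using the fact that M-regular sheaves are GV and that both properties are well-behaved under the operations appearing in that decomposition. First I would apply \theoremref{thm:Chen-Jiang} to $M \in \HM{T}{w}$ (regarded through its complexification $(M \oplus M, J_M)$, exactly as in the proof of \theoremref{thm:Chen-Jiang}), obtaining for each $k$ a decomposition
\[
	\gr_k^F \Mmod \simeq \bigoplus_{j=1}^n \bigl( \qu_j \shF_j \tensor_{\OT} L_j \bigr),
\]
with $q_j \colon T \to T_j$ surjective with connected fibers, $\shF_j$ an M-regular coherent sheaf on $T_j$, and $L_j \in \Pic^0(T)$. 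Since a direct sum of GV-sheaves is a GV-sheaf (the loci $S^i$ of a direct sum are the unions of the individual loci), it suffices to check that each summand $\qu_j \shF_j \tensor_{\OT} L_j$ is a GV-sheaf on $T$.

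Next I would reduce to the two elementary stability statements. Tensoring by $L_j \in \Pic^0(T)$ only translates $\Pic^0(T)$, hence does not change any codimension of an $S^i$ locus; so $\qu_j \shF_j \tensor L_j$ is GV iff $\qu_j \shF_j$ is GV. It then remains to show that if $q \colon T \to T'$ is a surjective morphism of compact complex tori with connected fibers and $\shG$ is M-regular on $T'$, then $\qu \shG$ is a GV-sheaf on $T$. For this I would use the dual torus picture: $q$ induces an inclusion $\widehat{q} \colon \widehat{T'} \into \widehat{T} = \Pic^0(T)$ with image the subtorus $\Pic^0(T')$, and by the projection formula together with $H^i(T, \qu \shG \tensor L) \simeq H^i(T', \shG \tensor \ql(\cdots))$ one gets that $H^i(T, \qu \shG \tensor L) \neq 0$ forces $L \in \Pic^0(T')$ and $H^i(T', \shG \tensor L) \neq 0$; more precisely, using that the fibers of $q$ are tori of dimension $d = \dim q$ and $R^a \ql \OT$ is $\bigwedge^a$ of the (trivial) relative cotangent bundle, the Leray spectral sequence gives $S^i(T, \qu \shG) = \bigcup_{a+b=i} S^b(T', \shG) \subseteq \Pic^0(T') \subseteq \Pic^0(T)$, where $0 \le a \le d$. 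Hence $\codim_{\Pic^0(T)} S^i(T, \qu \shG) \ge d + \min_{b \le i} \codim_{\Pic^0(T')} S^b(T', \shG)$; since $\shG$ is M-regular on $T'$ we have $\codim S^b(T', \shG) \ge b+1$ for $b \ge 1$ and $S^0(T',\shG)$ may be all of $\Pic^0(T')$, so for $i \ge 1$ the worst case $b = i$ gives $\codim_{\Pic^0(T)} S^i(T, \qu\shG) \ge d + i + 1 \ge i$, and the case $i=0$ is vacuous. Thus $\qu \shG$ is GV, and assembling the summands finishes the proof.

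The main obstacle is the bookkeeping in that last step: making sure the Leray spectral sequence for $q$ genuinely yields $S^i(T, \qu\shG) = \bigcup_{a+b=i} S^b(T', \shG)$ (one needs that $q_* (\qu\shG \tensor L) = \shG \tensor \ql L$ with $\ql L$ either zero or a line bundle on $T'$ according to whether $L$ restricts trivially to the fibers, and that the spectral sequence degenerates or at least that no differential can kill a nonzero $E_2$ term contributing to the relevant $H^i$ — degeneration here follows because the fibers are tori, so the relative situation is formal, or one can simply note that a class in the smallest nonvanishing row survives). Everything else is formal: the invariance under $\Pic^0$-twist, the behavior of GV under direct sums, and the numerical codimension estimate. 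An alternative, if one wishes to avoid the spectral-sequence argument entirely, is to invoke \theoremref{GV_WIT}: M-regular implies GV on $T'$, and pullback along $q$ commutes with the Fourier--Mukai functors in the appropriate sense (via $\widehat{q}$), so the $\mathrm{WIT}$ characterization of GV-sheaves transports directly; but the hands-on cohomological computation above is probably the cleanest to write out.
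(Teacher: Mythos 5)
Your proposal is correct and follows essentially the same route as the paper: apply the Chen--Jiang decomposition of \theoremref{thm:Chen-Jiang}, reduce to showing each summand $\qu_j \shF_j \tensor L_j$ is a GV-sheaf, and use the stability of the GV property under $\Pic^0$-twists, direct sums, and pullback along surjective torus homomorphisms. The paper simply cites \cite[\S2, Prop.~2.6]{ChenJiang} for the last point (noting those arguments port to complex tori), whereas you supply a Leray spectral sequence argument; that is a legitimate way to fill in the detail.

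Two remarks on your Leray computation. First, for the codimension bound you need only the inclusion $S^i(T, \qu\shG) \subseteq \bigcup_{a+b=i,\; 0\le a\le d} S^b(T', \shG)$ (viewed inside $\Pic^0(T)$ via $\Pic^0(T')\into\Pic^0(T)$), which follows from the mere existence of the spectral sequence --- no degeneration is needed, so the worry in your parenthetical is moot. Second, your phrase ``the worst case $b = i$'' is backwards: $b = i$ (i.e.\ $a=0$) gives the \emph{largest} codimension; the genuine worst case is $b = \max(0, i-d)$. Fortunately the bound still holds for all pieces: when $b = 0$ one must have $a = i \le d$, so the codimension is $d \ge i$; and for $b \ge 1$ M-regularity gives $d + (b+1) \ge d + (i-d) + 1 = i+1$. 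In fact you never needed M-regularity for this step: since $a + b = i$ with $a \le d$ forces $b \ge i - d$, the weaker GV estimate $\codim_{\Pic^0(T')} S^b(T',\shG) \ge b$ already gives $d + b \ge i$, which is exactly the formulation the paper invokes.
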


\begin{proof}
This follows immediately from \theoremref{thm:Chen-Jiang} and the fact that if $p
\colon T \to T_0$ is a surjective homomorphism of complex tori and $\shG$ is a GV-sheaf on
$T_0$, then $\shF = \fu \shG$ is a GV-sheaf on $T$. For this last statement and more
refined facts (for instance when $\shG$ is $M$-regular), see e.g.
\cite[\S2]{ChenJiang}, especially Proposition 2.6.  The arguments in \cite{ChenJiang}
are for abelian varieties, but given the remarks in \parref{par:GV-sheaves}, they
work equally well on compact complex tori.
\end{proof}

By specializing to the direct image of the canonical Hodge module $\RR_X \decal{\dim
X}$ along a morphism $f \colon X \to T$, we are finally able to conclude that each
$R^j \fl \omX$ is a GV-sheaf. In fact, we have the more refined \theoremref{thm:direct_image}; 
it was first proved for smooth projective varieties of maximal Albanese dimension by Chen
and Jiang \cite[Theorem~1.2]{ChenJiang}, which was a source of inspiration for us. 

\begin{proof}[Proof of \theoremref{thm:direct_image}]
Denote by $\RR_X \decal{\dim X} \in \HM{X}{\dim X}$ the polarizable real Hodge module
corresponding to the constant real variation of Hodge structure of rank one and
weight zero on $X$.  According to \cite[Theorem~3.1]{Saito-Kae}, each $\shH^j
\fl \RR_X \decal{\dim X}$ is a polarizable real Hodge module of weight $\dim X + j$
on $T$; it also admits an integral structure \cite[\S1.2.2]{Schnell-laz}. In the
decomposition by strict support, let $M$ be the summand with strict support $f(X)$;
note that $M$
still admits an integral structure by \lemmaref{lem:integral-summand}. Now $R^j \fl
\omX$ is the first nontrivial piece of the Hodge filtration on the underlying regular
holonomic $\Dmod$-module \cite{Saito-K}, and so the result follows directly from
\theoremref{thm:Chen-Jiang} and \corollaryref{cor:MHM-GV}. For the ampleness 
part in the statement, see \corollaryref{cor:MHM_positivity}.
\end{proof}

\begin{note}
Except for the assertion about finite order, \theoremref{thm:direct_image}  still holds
 for arbitrary coherent $\OT$-modules of the form 
\[
	R^j \fl ( \omX\tensor L)
\]
with $L \in \Pic^0(X)$. The point is that every such $L$ is the holomorphic line
bundle associated with a unitary character $\rho \in \Char(X)$; we can therefore
apply the same argument as above to the polarizable complex Hodge module $\CCrho
\decal{\dim X}$.
\end{note}

If the given morphism is generically finite over its image, we can say more. 

\begin{corollary} \label{cor:gen-finite}
If $f \colon X \to T$ is generically finite over its image, then $S^0(T,
\fl \omX)$ is preserved by the involution $L \mapsto L^{-1}$ of $\Pic^0(T)$.
\end{corollary}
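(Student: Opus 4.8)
The plan is to reduce to the Chen–Jiang decomposition of \theoremref{thm:direct_image} and then to exploit the self-duality of the Hodge module that produces $\fl\omX$.

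\emph{Reduction.} Since the bottom step of the Leray filtration is all of $H^0$ (or, more than enough, by \theoremref{takegoshi}(a)), we have $H^0(T,\fl\omX\tensor L)=H^0(X,\omX\tensor\fu L)$ for every $L\in\Pic^0(T)$, so $S^0(T,\fl\omX)=\menge{L\in\Pic^0(T)}{H^0(X,\omX\tensor\fu L)\neq 0}$. By \theoremref{thm:direct_image} write $\fl\omX\simeq\bigoplus_{k}\bigl(\qu_k\shF_k\tensor L_k\bigr)$, where $q_k\colon T\to T_k$ is surjective with connected fibres, $\shF_k\neq 0$ is M-regular on $T_k$, and $L_k\in\Pic^0(T)$ has finite order. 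It therefore suffices to understand $S^0$ of each summand.

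\emph{Computing $S^0$.} Put $\widehat T_k=\qu_k\Pic^0(T_k)\subseteq\Pic^0(T)$, a subtorus. By the projection formula $\derR q_{k\ast}\bigl(\qu_k\shF_k\tensor L_k\tensor L\bigr)\simeq\shF_k\Ltensor\derR q_{k\ast}(L_k\tensor L)$; as the fibres of $q_k$ are translates of $\ker q_k$, the complex $\derR q_{k\ast}(L_k\tensor L)$ vanishes unless $L_k\tensor L$ restricts trivially to $\ker q_k$, i.e.\ unless $L_k\tensor L\in\widehat T_k$, in which case $q_{k\ast}\shO_T=\shO_{T_k}$ gives $H^0\bigl(T,\qu_k\shF_k\tensor L_k\tensor L\bigr)=H^0(T_k,\shF_k\tensor L')$ for the descent $L'$ of $L_k\tensor L$. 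Moreover a nonzero M-regular sheaf has full $S^0$: the loci $S^i(T_k,\shF_k)$ are proper for $i\geq 1$, so $\chi(T_k,\shF_k)=h^0(T_k,\shF_k\tensor L')$ for general $L'$, and were this $0$ then $\derR\Phi_P(\shF_k)$ would vanish at a general point, forcing the transform $\shFh_k$ — torsion-free since $\shF_k$ is M-regular — to be zero, hence $\shF_k=0$ by \theoremref{mukai}. Thus $S^0(T_k,\shF_k)=\Pic^0(T_k)$, and combining the two observations,
\[
	S^0(T,\fl\omX)=\bigcup_{k}\bigl(\widehat T_k-L_k\bigr),
\]
a finite union of torsion translates of subtori of $\Pic^0(T)$.

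\emph{Symmetry.} It remains to show this union is invariant under $L\mapsto L^{-1}$, i.e.\ that the collection of pairs $\bigl(\ker q_k,\;L_k\bmod\widehat T_k\bigr)$ is invariant under $L_k\mapsto L_k^{-1}$. By \eqref{eq:Saito}, $\fl\omX=\omega_T\tensor F_{p(M)}\Mmod$, where $M\in\HM{T}{\dim X}$ is the summand with strict support $f(X)$ of $\shH^0\fl\bigl(\RR_X\decal{\dim X}\bigr)$; since $\DD\bigl(\RR_X\decal{\dim X}\bigr)\simeq\RR_X\decal{\dim X}(\dim X)$ and $f$ is proper, $\DD M\simeq M(\dim X)$. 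Apply \theoremref{thm:CHM-main} to the complexification $(M\oplus M,J_M)$; by the essential uniqueness of the decomposition \eqref{eq:decomposition}, $\DD$ carries $\qu_k(N_k,J_k)\tensor\CC_{\rho_k}$ to a summand isomorphic to $\qu_k\DD(N_k,J_k)\tensor\CC_{\bar\rho_k}$, where $\DD(N_k,J_k)$ is again simple, supported on the same abelian quotient $T_k$, with $\chi(T_k,\DD N_k,\DD J_k)=\chi(T_k,N_k,J_k)>0$ (so \lemmaref{lem:M-regular} still applies), and where $\CC_{\bar\rho_k}$ corresponds to $L_k^{-1}$ because $\rho_k$ is unitary. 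The point requiring generic finiteness is that the generic variation of Hodge structure of $M$ on $f(X)$ is pure of type $(0,0)$, which forces the particular graded piece $F_{p(M)}\Mmod$ computing $\fl\omX$ to be interchanged \emph{with itself} (under Grothendieck–Serre duality on $T$, where $\omega_T$ is trivial) by $\DD$ — so that passing to $\gr_{p(M)}^F$ on both sides yields, for each $k$, a $k'$ with $\ker q_{k'}=\ker q_k$ and $L_{k'}\equiv L_k^{-1}\pmod{\widehat T_k}$. Then $\widehat T_{k'}-L_{k'}=-\bigl(\widehat T_k-L_k\bigr)$, and $S^0(T,\fl\omX)$ is stable under $L\mapsto L^{-1}$.

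\emph{Expected main obstacle.} Everything outside the last paragraph is routine given \theoremref{thm:direct_image} and \lemmaref{lem:M-regular}. The real work is making precise how Saito's duality acts on the \emph{bottom} graded piece $F_{p(M)}\Mmod$ of the Hodge filtration, and checking that generic finiteness of $f$ — via the type $(0,0)$ of the generic variation on $f(X)$ — genuinely makes that piece self-dual under Grothendieck–Serre duality, rather than merely paired with another graded piece of $\Mmod$.
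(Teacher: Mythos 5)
Your reduction to the Chen--Jiang decomposition and the computation of $S^0(T,\fl\omX)$ as a finite union of torsion translates of subtori are sound. But the symmetry step takes a genuinely different route from the paper, and the gap you flag in the last paragraph is real, not a formality.

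The paper never invokes Verdier duality. Instead it reads the symmetry directly off the decomposition of $(M\oplus M,J_M)$ after passing to $\Dmod$-modules: by the note following \lemmaref{lem:twist}, a single complex Hodge module summand $q_j^{-1}(N_j,J_j)\tensor_{\CC}\CC_{\rho_j}$ contributes the \emph{pair} $\qu_j\Nmod_j'\tensor(L_j,\nabla_j)$ and $\qu_j\Nmod_j''\tensor(L_j,\nabla_j)^{-1}$, where $\Nmod_j'$ and $\Nmod_j''$ are the $\pm i$-eigenspaces of $J_j$ and $\CC_{\bar\rho_j}$ corresponds to $L_j^{-1}$. Passing to the bottom Hodge piece yields
\[
\fl\omX\oplus\fl\omX\simeq
\bigoplus_j\bigl(\qu_j\shF_j'\tensor L_j\bigr)\;\oplus\;
\bigoplus_j\bigl(\qu_j\shF_j''\tensor L_j^{-1}\bigr),
\]
with $\shF_j'=F_{p(M)}\Nmod_j'$ and $\shF_j''=F_{p(M)}\Nmod_j''$. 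Generic finiteness enters only at one point: since the generic VHS underlying $N_j$ is of type $(0,0)$, on a dense open subset of $\Supp N_j$ one has $\shF_j'=\Nmod_j'$ and $\shF_j''=\Nmod_j''$, and the real structure on $N_j$ swaps the two eigenspaces. Hence $\shF_j'$ and $\shF_j''$ have the \emph{same} generic rank, so each is nonzero on $\Supp N_j=(q_j\circ f)(X)$ precisely when the other is. With M-regularity this gives $S^0(T,\fl\omX)=\bigcup_j(L_j^{-1}\tensor\Pic^0(T_j))\cup\bigcup_j(L_j\tensor\Pic^0(T_j))$, and the symmetry is manifest.

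Your route via $\DD M\simeq M(\dim X)$ runs into exactly the problem you name. Saito/Grothendieck--Serre duality on $T$ pairs the graded pieces of the filtered de Rham complex end to end: the bottom piece $\omega_T\tensor F_{p(M)}\Mmod$ is dual to the \emph{top} graded de Rham piece, not to itself. The fact that $M$ is generically a type-$(0,0)$ variation collapses the filtration on a Zariski-open subset of $f(X)$, but the Hodge filtration of the Hodge module acquires extra steps along the boundary (this already happens for $\il\OZ$ with $Z\subsetneq T$ smooth), so $p(M)$ is generally not the top index. Consequently, matching summands of the decomposition under $\DD$ and then passing to $\gr^F_{p(M)}$ does not by itself return the decomposition of $\fl\omX$; you would be comparing $\fl\omX$ with a different coherent sheaf. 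To repair this you would eventually have to invoke the equal-generic-rank observation above — i.e.\ precisely the step the paper uses — at which point the duality detour is unnecessary. In short: the obstacle you flagged is the actual content of the corollary, and the paper's complexification-and-conjugation argument is the intended way to resolve it.
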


\begin{proof}
As before, we define $M = \shH^0 \fl \RR_X \decal{\dim X} \in \HM{T}{\dim X}$.
Recall from \corollaryref{cor:CHM-real} that we have a decomposition
\[
	(M \oplus M, J_M) \simeq \bigoplus_{j=1}^n \bigl( q_j^{-1}(N_j, J_j) 
		\tensor_{\CC} \CC_{\rho_j} \bigr).
\]
Since $f$ is generically finite over its image, there is a dense Zariski-open subset
of $f(X)$ where $M$ is a variation of Hodge structure of type $(0,0)$; the
above decomposition shows that the same is true for $N_j$ on $(q_j \circ f)(X)$. If
we pass to the underlying regular holonomic $\Dmod$-modules and remember
\lemmaref{lem:twist}, we see that
\[
	\Mmod \oplus \Mmod \simeq \bigoplus_{j=1}^n 
		\Bigl( \qu_j \Nmod_j' \tensor_{\OT} (L_j, \nabla_j) \Bigr) \oplus
		\bigoplus_{j=1}^n \Bigl( \qu_j \Nmod_j'' \tensor_{\OT} (L_j, \nabla_j)^{-1} \Bigr),
\]
where $(L_j, \nabla_j)$ is the flat bundle corresponding to the character $\rho_j$.
By looking at the first nontrivial step in the Hodge filtration on $\Mmod$, we then get
\[
	\fl \omX \oplus \fl \omX \simeq 
		\bigoplus_{j=1}^n \Bigl( \qu_j \shF_j' \tensor_{\OT} L_j \Bigr)
		\oplus \bigoplus_{j=1}^n \Bigl( \qu_j \shF_j'' \tensor_{\OT} L_j^{-1} \Bigr),
\]
where $\shF_j' = F_{p(M)} \Nmod_j'$ and $\shF_j'' = F_{p(M)} \Nmod_j''$, and $p(M)$
is the smallest integer with the property that $F_p \Mmod \neq 0$. Both sheaves are
torsion-free on $(q_j \circ f)(X)$, and can therefore be nonzero only when $\Supp N_j
= (q_j \circ f)(X)$; after re-indexing, we may assume that this holds exactly in the
range $1 \leq j \leq m$. 

Now we reach the crucial point of the argument: the fact that $N_j$ is generically a
polarizable real variation of Hodge structure of type $(0,0)$ implies that $\shF_j'$
and $\shF_j''$ have the same rank at the generic point of $(q_j \circ f)(X)$. Indeed,
on a dense Zariski-open subset of $(q_j \circ f)(X)$, we have $\shF_j' = \Nmod_j'$
and $\shF_j'' = \Nmod_j''$, and complex conjugation with respect to the real
structure on $N_j$ interchanges the two factors. 

Since $\shF_j'$ and $\shF_j''$ are M-regular by \lemmaref{lem:M-regular}, we have (for
$1 \leq j \leq m$)
\[
	S^0(T, \qu_j \shF_j' \tensor_{\OT} L_j) =
		L_j^{-1} \tensor S^0(T_j, \shF_j') 
		= L_j^{-1} \tensor \Pic^0(T_j),
\]
and similarly for $\qu_j \shF_j'' \tensor_{\OT} L_j^{-1}$; to simplify the notation,
we identify $\Pic^0(T_j)$ with its image in $\Pic^0(T)$. The decomposition from above
now gives
\[
	S^0(T, \fl \omX) = \bigcup_{j=1}^m \Bigl( L_j^{-1} \tensor \Pic^0(T_j) \Bigr)
		\cup \bigcup_{j=1}^m \Bigl( L_j \tensor \Pic^0(T_j) \Bigr),
\]
and the right-hand side is clearly preserved by the involution $L \mapsto L^{-1}$.
\end{proof}

\subsection{Points of finite order on cohomology support loci}

Let $f \colon X \to T$ be a holomorphic mapping from a compact K\"ahler manifold to a
compact complex torus. Our goal in this section is to prove that the cohomology
support loci of the coherent $\OT$-modules $R^j \fl \omX$ are finite unions of
translates of subtori by points of finite order. We consider the refined cohomology
support loci
\[
	S_m^i(T, R^j \fl \omX) = \menge{L \in \Pic^0(T)}%
		{\dim H^i(T, R^j \fl \omX \tensor L) \geq m} \subseteq \Pic^0(T).
\]
The following result is well-known in the projective case.

\begin{corollary} \label{cor:Wang}
Every irreducible component of $S_m^i(T, R^j \fl \omX)$ is a translate of a subtorus of
$\Pic^0(T)$ by a point of finite order.
\end{corollary}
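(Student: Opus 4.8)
The plan is to deduce the corollary from the Chen--Jiang decomposition in \theoremref{thm:direct_image}, together with the structural results on cohomology support loci over abelian varieties from \cite{Schnell-laz}, by controlling a short chain of reductions. Call a subset of a complex torus \emph{arithmetic} if it is a finite union of translates of subtori by points of finite order; since $S^i_m(T,R^j\fl\omX)$ is a closed analytic subset of $\Pic^0(T)$ by the semicontinuity theorem, it has finitely many irreducible components, so it is enough to show it is arithmetic. I will use freely that the class of arithmetic subsets of a complex torus is stable under finite unions, finite intersections, translation by a point of finite order, images under injective homomorphisms of tori, and preimages under surjective homomorphisms of tori.

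First I would invoke \theoremref{thm:direct_image} to write $R^j\fl\omX\simeq\bigoplus_{k=1}^n(\qu_k\shF_k\tensor_{\OT}L_k)$, with $\shF_k$ an $M$-regular coherent sheaf with projective support on a complex torus $T_k$, $q_k\colon T\to T_k$ surjective with connected fibers, and $L_k\in\Pic^0(T)$ of finite order. Taking cohomology gives
\[
	S^i_m(T,R^j\fl\omX)=\bigcup_{m_1+\dots+m_n=m}\ \bigcap_{k=1}^n\bigl(L_k^{-1}\tensor S^i_{m_k}(T,\qu_k\shF_k)\bigr),
\]
with the convention $S^i_0(T,-)=\Pic^0(T)$; since the $L_k$ have finite order, it suffices to show each $S^i_{m'}(T,\qu_k\shF_k)$ is arithmetic. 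Next I would descend from $T$ to $T_k$: one checks that $\derR q_{k\ast}(\qu_k\shF_k\tensor L)=0$ whenever $L$ restricts nontrivially to the subtorus $\ker q_k$ (on each fiber of $q_k$ the sheaf becomes a direct sum of shifted copies of a nontrivial degree-zero line bundle on a complex torus, hence is acyclic), and that, because $q_k$ is a group homomorphism, $\derR q_{k\ast}\OT$ is a direct sum of trivial bundles, so for $L=\qu_k L'$ the projection formula expresses $H^i(T,\qu_k\shF_k\tensor\qu_k L')$ as a direct sum of copies of the $H^a(T_k,\shF_k\tensor L')$. Therefore $S^i_{m'}(T,\qu_k\shF_k)$ is the image under $\qu_k$ of a finite union of finite intersections of the loci $S^a_{\bullet}(T_k,\shF_k)$ (alternatively one invokes \cite[\S2]{ChenJiang}, whose arguments apply to compact complex tori as noted in the proof of \corollaryref{cor:MHM-GV}), and it suffices to show each $S^a_m(T_k,\shF_k)$ is arithmetic.

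Finally I would identify $\shF_k$ and descend to an abelian variety. Tracing through the proof of \theoremref{thm:direct_image} --- that is, of \theoremref{thm:Chen-Jiang} via \corollaryref{cor:CHM-real} --- the sheaf $\shF_k$ is a graded piece $\gr^F_{\bullet}\Nmod_k'$ of the $\Dmod$-module underlying a simple polarizable complex Hodge module $(N_k,K_k)$ on $T_k$ with $\Supp N_k$ projective; moreover, since $M=\shH^j\fl(\RR_X\decal{\dim X})$ --- and hence, by \lemmaref{lem:integral-summand}, the summand of it with strict support $f(X)$ --- carries an integral structure, the argument already used in the proof of \corollaryref{cor:finite-order} shows that the complex perverse sheaf underlying $N_k$ is a direct factor of the complexification of an object of $\Dbc(\ZZ_{T_k})$. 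As $\Supp N_k$ is projective, it lies --- after a translation of $T_k$, which does not affect $\Pic^0$ --- in an abelian subvariety $A_k\subseteq T_k$; by Kashiwara's equivalence for polarizable complex Hodge modules, $N_k$ descends to $A_k$, so $\shF_k=i_{k\ast}\mathcal{G}_k$ for a coherent sheaf $\mathcal{G}_k$ on $A_k$ that is a graded piece of the filtered $\Dmod$-module underlying a simple polarizable complex Hodge module on $A_k$ whose underlying complex perverse sheaf is a direct factor of the complexification of an object of $\Dbc(\ZZ_{A_k})$. Then $S^a_m(T_k,\shF_k)$ is the preimage of $S^a_m(A_k,\mathcal{G}_k)$ under the surjective homomorphism $\Pic^0(T_k)\onto\Pic^0(A_k)$, and $S^a_m(A_k,\mathcal{G}_k)$ is arithmetic by the coherent-sheaf version of the structural result on abelian varieties in \cite{Schnell-laz} --- the same input used, in its constructible form, for \corollaryref{cor:finite-order}. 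This closes the chain of reductions.

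The step I expect to be the main obstacle is the last one. It needs the \emph{coherent} form of the torsion-point theorem on abelian varieties (for graded pieces $\gr^F_k$ over $\Pic^0(A)$), rather than the perverse-sheaf form quoted for \corollaryref{cor:finite-order}, and --- equally important --- it needs the fact that the Hodge modules $N_k$ produced by \theoremref{thm:direct_image} are genuinely of integral origin, which is not part of the \emph{statement} of that theorem but must be extracted from its proof and propagated through the decomposition. A secondary, purely bookkeeping, point is to verify that each reduction (removing $L_k$, pulling back along $q_k$, pushing forward along $i_k$, and taking preimages under $\Pic^0(T_k)\onto\Pic^0(A_k)$) keeps us within the class of arithmetic subsets; this is routine but should be checked.
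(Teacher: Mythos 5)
Your approach is genuinely different from the paper's, and you have correctly located the gap yourself. Let me describe the difference and then comment on the gap.

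The paper does not route through the Chen--Jiang decomposition at all. It works directly with the polarizable real Hodge module $M$ with strict support $f(X)$ inside $\shH^j\fl\RR_X\decal{\dim X}$: Corollary~\ref{cor:finite-order} (the torus-level torsion-point theorem for perverse sheaves, itself deduced from Corollary~\ref{cor:CHM-real} and the abelian-variety result of \cite{Schnell-laz}) gives that $S_m^i(T,M)\subseteq\Char(T)$ is arithmetic. One then intersects with the subgroup of unitary characters, uses strictness of the hypercohomology complex of $(M\oplus M, J_M)\tensor_{\CC}\CCrho$ (as in the proof of Lemma~\ref{lem:M-regular}) to get
\[
	\dim H^i(T,\ratM\tensor_{\RR}\CCrho)=\sum_p\dim H^i\bigl(T,\gr_p^F\DR(\Mmod)\tensor_{\OT}L_{\rho}\bigr),
\]
applies the Arapura splitting argument to conclude that each $S_m^i(T,\gr_p^F\DR(\Mmod))$ is arithmetic, and finally specializes to $p=p(M)$ where $\gr_{p(M)}^F\DR(\Mmod)\simeq R^j\fl\omX$. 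No descent to abelian varieties and no bookkeeping over a direct-sum decomposition is needed, because the unitary-restriction/strictness/Arapura chain produces the coherent-sheaf statement directly on $T$.

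The obstacle you identify at the end of your proposal is a real one and is the crux of the matter. The ``coherent-sheaf version of the torsion-point theorem on abelian varieties'' is not available off the shelf in \cite{Schnell-laz}; what is available there is the perverse-sheaf version ($\Char(A)$-loci), which is exactly the input the paper uses. To pass from that to the statement about $S_m^a(A_k,\gr^F_p\Nmod_k')$ in $\Pic^0(A_k)$, you would need to run precisely the same auxiliary chain the paper runs on $T$ (restrict to unitary characters, use strictness/$E_1$-degeneration of the hypercohomology complex after twisting by unitary $\CCrho$ --- valid here since $(N_k,K_k)\tensor\CCrho$ is again a polarizable complex Hodge module by Lemma~\ref{lem:twist} --- then invoke Arapura, then use triviality of the cotangent bundle to pass from $\gr^F\DR$ to $\gr^F$ as in Lemma~\ref{lem:M-regular}). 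So your route, once the gap is filled, reduces to the paper's argument applied piecemeal to each $N_k$ on $A_k$, after a substantial amount of extra bookkeeping (Chen--Jiang decomposition, descent along $q_k$, descent to $A_k$, propagating the integral structure through the decomposition, tracking $\gr^F\DR$ versus $\gr^F$). Your earlier reduction steps are plausible modulo the cited technical facts (the K\"unneth-type splitting of cohomology along $q_k$ and the stability of arithmetic subsets under the relevant homomorphisms), and your point about needing to extract the integral-origin of $N_k$ from the \emph{proof} of Theorem~\ref{thm:direct_image} rather than its statement is well taken --- that is indeed the content of the argument in Corollary~\ref{cor:finite-order} and is not automatic. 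In short: the plan is coherent but not complete, and the missing piece is not a side lemma; it is the heart of the proof, which the paper obtains by a shorter and more direct route.
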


\begin{proof}
As in the proof of \theoremref{thm:direct_image} (in \parref{par:ChenJiang}), we let
$M \in \HM{T}{\dim X + j}$ be the summand with strict support $f(X)$ in the decomposition
by strict support of $\shH^j \fl \RR_X \decal{\dim X}$; then $M$ admits an integral
structure, and 
\[
	R^j \fl \omX \simeq F_{p(M)} \Mmod,
\]
where $p(M)$ again means the smallest integer such that $F_p \Mmod \neq 0$.
Since $M$ still admits an integral structure by \lemmaref{lem:integral-summand}, the
result in \corollaryref{cor:finite-order} shows that the sets
\[
	S_m^i(T, M) = \menge{\rho \in \Char(T)}%
		{\dim H^i(T, \ratM \tensor_{\RR} \CCrho) \geq m}
\]
are finite unions of translates of linear subvarieties by points of finite order. As
in the proof of \lemmaref{lem:M-regular}, the strictness of the complex computing the
hypercohomology of $(M \oplus M, J_M) \tensor_{\CC} \CCrho$ implies that
\[
	\dim H^i(T, \ratM \tensor_{\RR} \CCrho) = \sum_{p \in \ZZ} 
		\dim H^i \bigl( T, \gr_p^F \DR(\Mmod) \tensor_{\OT} L_{\rho} \bigr)
\]
for every unitary character $\rho \in \Char(T)$; here $L_{\rho} = \CCrho
\tensor_{\CC} \OT$. Note that $\gr_p^F \DR(\Mmod)$ is acyclic for $p \gg 0$, and so
the sum on the right-hand side is actually finite. Intersecting $S_m^i(T, M)$ with
the subgroup of unitary characters, we see that each~set
\[
	\Menge{L \in \Pic^0(T)}%
		{\sum_{p \in \ZZ} \dim H^i \bigl( T, \gr_p^F \DR(\Mmod) 
			\tensor_{\OT} L \bigr) \geq m}
\]
is a finite union of translates of subtori by points of finite order. By a standard
argument \cite[p.~312]{Arapura}, it follows that the same is true for each of the
summands; in other words, for each $p \in \ZZ$, the set
\[
	S_m^i \bigl( T, \gr_p^F \DR(\Mmod) \bigr) \subseteq \Pic^0(T)
\]
is itself a finite union of translates of subtori by points of finite order. Since
\[
	\gr_{p(M)}^F \DR(\Mmod) = \omega_T \tensor F_{p(M)} \Mmod \simeq R^j \fl \omX,
\]
we now obtain the assertion by specializing to $p = p(M)$.
\end{proof}

\begin{note}
Alternatively, one can deduce \corollaryref{cor:Wang} from Wang's theorem \cite{Wang}
about cohomology jump loci on compact K\"ahler manifolds, as follows. Wang shows that 
the sets $S_m^{p,q}(X) = \menge{L \in \Pic^0(X)}{\dim H^q(X, \OmX^p
\tensor L) \geq m}$ are finite unions of translates of subtori by points of finite
order; in particular, this is true for $\omX = \OmX^{\dim X}$. Takegoshi's results
about higher direct images of $\omX$ in \theoremref{takegoshi} imply the
$E_2$-degeneration of the spectral sequence
\[
	E_2^{i,j} = H^i \bigl( T, R^j \fl \omX \tensor L \bigr)
		\Longrightarrow H^{i+j}(X, \omX \tensor \fu L)
\]
for every $L \in \Pic^0(T)$, which means that
\[
	\dim H^q(X, \omX \tensor \fu L) 
		= \sum_{k+j=q} \dim H^k \bigl( T, R^j \fl \omX \tensor L \bigr).
\] 
The assertion now follows from Wang's theorem by the same argument as above.
\end{note}

\section{Applications}

\subsection{Bimeromorphic characterization of tori}

Our main application of generic vanishing for higher direct images of dualizing
sheaves is an extension of the
Chen-Hacon birational characterization of abelian varieties \cite{CH1} to the
K\"ahler case. 

\begin{theorem}\label{torus}
Let $X$ be a compact K\"ahler manifold with $P_1(X) = P_2 (X) = 1$ and $h^{1,0}(X) =
\dim X$. Then $X$ is bimeromorphic to a compact complex torus. 
\end{theorem}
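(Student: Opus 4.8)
The plan is to study the Albanese map and reduce the theorem to the assertion that it has degree one. Since $X$ is compact Kähler, the hypothesis $h^{1,0}(X) = \dim X =: n$ says exactly that the Albanese torus $A = \Alb(X)$ has dimension $n$; let $a \colon X \to A$ be the Albanese map. Because $A$ is smooth, it suffices to prove that $a$ is surjective and generically finite of degree one: such a holomorphic map between compact complex manifolds is automatically bimeromorphic, being an isomorphism over a dense Zariski-open subset of $A$. So the whole proof amounts to a computation of $a_* \omX$; we may assume $n \geq 1$.

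First I would apply \theoremref{thm:direct_image} to $a$, obtaining a decomposition $a_* \omX \simeq \bigoplus_{k=1}^N \bigl( q_k^* \shF_k \tensor L_k \bigr)$, with each $\shF_k$ an M-regular---hence ample, hence full-support---coherent sheaf on an abelian variety $T_k$, each $q_k \colon A \to T_k$ surjective with connected fibers, and each $L_k \in \Pic^0(A)$ of finite order. Since $\shF_k$ has full support, $\Supp\bigl( q_k^* \shF_k \tensor L_k \bigr) = A$, whence $\Supp(a_* \omX) = a(X) = A$: the Albanese map is already surjective, hence generically finite because $\dim A = \dim X$, with $\deg a = \rk(a_* \omX) = \sum_k \rk \shF_k \geq 1$. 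Using that a nonzero M-regular sheaf $\shF_k$ has $H^0(T_k, \shF_k \tensor M) \neq 0$ for every $M \in \Pic^0(T_k)$ while $q_{k\ast}$ annihilates the remaining twists, one reads off the cohomology support locus
\[
	V^0 := S^0(A, a_* \omX) = \bigcup_{k=1}^N \bigl( L_k^{-1} \tensor q_k^* \Pic^0(T_k) \bigr),
\]
a union of translates of subtori of $\Pic^0(A) \cong \Pic^0(X)$ by points of finite order, satisfying $h^0(A, a_* \omX \tensor L) \geq \#\bigl\{\, k : L \in L_k^{-1} \tensor q_k^* \Pic^0(T_k) \,\bigr\}$.

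Next I would use $P_1 = P_2 = 1$. From $h^0(A, a_* \omX) = h^0(X, \omX) = 1$ exactly one index, say $k = 1$, satisfies $\OA \in L_1^{-1} \tensor q_1^* \Pic^0(T_1)$; absorbing $L_1$ into $\shF_1$ we may take $L_1 = \OA$ and $h^0(T_1, \shF_1) = 1$, so the component of $V^0$ through the origin is the subtorus $S_1 := q_1^* \Pic^0(T_1)$. For the second hypothesis: if $\rho \in \Pic^0(X)$ is $2$-torsion and $H^0(X, \omX \tensor \rho) \neq 0$, the square of its (unique up to scalar) section lies in $H^0(X, \omX^{\tensor 2})$, whose only effective member is twice the only member of $\lvert \omX \rvert$; hence $\omX \tensor \rho \simeq \omX$ and $\rho = \OX$. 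Thus $V^0 \cap \Pic^0(X)[2] = \{ \OX \}$, while $S_1 \subseteq V^0$ contributes its full $2$-torsion subgroup, of order $2^{\dim S_1}$; therefore $\dim S_1 = 0$, i.e.\ $T_1$ is a point, $\shF_1 = \CC$, and $\OA$ occurs in $a_* \omX$ as a direct summand of multiplicity one. Feeding $a$ into \corollaryref{cor:gen-finite} shows in addition that $V^0$ is stable under $L \mapsto L^{-1}$; combined with the $2$-torsion constraint, this pins down and pairs up the remaining components $L_k^{-1} \tensor q_k^* \Pic^0(T_k)$, $k \geq 2$ (in particular no $T_k$ equals $A$, and one obtains $\chi(X, \omX) = \chi(A, a_* \omX) = 0$).

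The remaining---and hardest---step is to eliminate the extra summands, i.e.\ to prove $a_* \omX \simeq \OA$, and hence $\deg a = 1$; once this is done, $a$ is a generically finite degree-one surjection onto the smooth torus $A$, so $X$ is bimeromorphic to $A = \Alb(X)$ and the theorem follows. I expect this to be the main obstacle, since the cheap $P_2 = 1$ argument above only controls $2$-torsion twists, whereas a summand indexed by an $L_k$ of higher order has to be killed by passing to the finite étale Galois cover $X' \to X$ that trivializes $L_k$---where $P_1$ and $P_2$ stay under control because $a_* \omX = \OA \oplus (\textrm{rest})$ is already split off and plurigenera are subadditive---and rerunning the Fourier--Mukai/generic-vanishing analysis on $X'$ with \theoremref{thm:Chen-Jiang} and \corollaryref{cor:gen-finite}, organized as an induction on $\dim X$ via the invariant $\chi(X, \omX)$. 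This final stage is exactly where the symmetry statement of \corollaryref{cor:gen-finite} replaces and substantially simplifies the corresponding arguments of \cite{Pareschi} and \cite{CH1}.
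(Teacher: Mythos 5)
There are two genuine gaps, one of which invalidates a step and one of which leaves the heart of the proof unfinished.

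The first gap is the claim that an M-regular (or even ample) coherent sheaf on a torus ``has full support.'' This is false. M-regularity only constrains $S^i$ for $i \geq 1$; for instance the skyscraper sheaf at a point, or the pushforward to an abelian surface $A$ of a degree-one line bundle on a curve generating $A$, are both M-regular (and ample in the sense of this paper) with proper support. Moreover \theoremref{thm:direct_image} only asserts that $\Supp \shF_k$ is a \emph{projective subvariety} of the (not necessarily algebraic) compact complex torus $T_k$, not that $\shF_k$ is supported on all of $T_k$. So your deduction that $\Supp(a_*\omX) = A$, and hence that $a$ is surjective, does not go through. The paper proves surjectivity differently: by \lemmaref{isolated} the origin is isolated in $S^0(X, \omX)$, and \propositionref{sliding}(c) shows that an isolated point of $S^0$ of a GV-sheaf lies in $S^{\dim T}$, forcing $\dim \Supp(\fl \omX) \geq \dim T$ (\lemmaref{surjective}).

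The second gap is the final step, which you leave as a sketch whose proposed mechanism (pass to the \'etale cover trivializing $L_k$, claim $P_1, P_2$ ``stay under control,'' run an induction on $\chi$) is neither carried out nor plausible without substantial extra work---plurigenera do not behave tamely under \'etale covers. In fact no induction or covers are needed, and you already have the key ingredient in hand: \corollaryref{cor:gen-finite} says $S^0(T, \fl\omX)$ is stable under $L \mapsto L^{-1}$, and \lemmaref{isolated} in its full strength says no positive-dimensional $Z \subseteq \Pic^0(X)$ can have both $Z$ and $Z^{-1}$ inside $S^0(X, \omX)$. These two facts together immediately force $\dim S^0(T, \fl\omX) = 0$, with no case analysis on the $L_k$ and no torsion bookkeeping; your $2$-torsion argument is a weaker avatar of \lemmaref{isolated} that only controls the component through the origin and cannot reach the other translates. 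Once $S^0$ is zero-dimensional, \propositionref{sliding}(a) gives $S^0 = S^{\dim T}$, and since $R^{>0}\fl\omX = 0$ for the generically finite $f$ (\theoremref{takegoshi}), the Leray spectral sequence and Serre duality give $S^{\dim T}(T, \fl\omX) = S^{\dim X}(X, \omX) = \{\OT\}$. Feeding $S^0 = \{\OT\}$ back into the Chen--Jiang decomposition forces every $\Pic^0(T_k)$ to be trivial and every $L_k = \OT$, so $\fl\omX$ is a trivial bundle whose rank is $h^0(X, \omX) = 1$; hence $\fl\omX \simeq \OT$ and $a$ is birational. This is exactly the paper's argument, and it is where the real simplification over \cite{Pareschi, CH1} occurs.
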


Throughout this section, we take $X$ to be a compact K\"ahler manifold, and denote by $f
\colon X \to T$ its Albanese mapping; by assumption, we have 
\[
	\dim T = h^{1,0}(X) = \dim X.
\]
We use the following standard notation, analogous to that in \parref{par:GV-sheaves}:
\[
	S^i (X, \omX) = \menge{L \in \Pic^0(X)}{H^i (X, \omega_X \tensor L) \neq 0}
\]
To simplify things, we shall identify $\Pic^0(X)$ and $\Pic^0(T)$ in what
follows. We begin by recalling a few well-known results.

\begin{lemma}\label{isolated}
If $P_1(X) = P_2(X) = 1$, then there cannot be any positive-dimensional analytic
subvariety $Z \subseteq \Pic^0 (X)$ such that both $Z$ and $Z^{-1}$ are contained in $S^0
(X, \omega_X)$. In particular, the origin must be an isolated point in $S^0 (X, \omega_X)$.
\end{lemma}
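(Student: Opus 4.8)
The plan is to prove the two assertions separately. The first — that no positive-dimensional $Z$ can have both $Z$ and $Z^{-1}$ inside $S^0(X,\omega_X)$ — is where the hypotheses enter: I would deduce it from an elementary argument with effective divisors that uses $P_1(X)=P_2(X)=1$ in an essential way. The ``in particular'' clause will then follow formally, once one knows that the components of the cohomology support locus are ``linear''. The step I expect to carry the real content is the divisor argument, and in particular the observation that $P_2(X)=1$ forces a certain rigidity.

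For the first assertion I would argue by contradiction. Fix a nonzero $\omega\in H^0(X,\omega_X)$, which exists since $P_1(X)=1$, and let $K=\Div(\omega)$ be the corresponding effective divisor. For an arbitrary $L\in Z$, the hypotheses $Z\subseteq S^0(X,\omega_X)$ and $Z^{-1}\subseteq S^0(X,\omega_X)$ yield nonzero sections $s\in H^0(X,\omega_X\tensor L)$ and $t\in H^0(X,\omega_X\tensor L^{-1})$. Their product $s\tensor t$ is a section of $(\omega_X\tensor L)\tensor(\omega_X\tensor L^{-1})\simeq\omega_X^{\tensor 2}$, and it is nonzero because $X$ is connected (the zero locus of a nonzero section of a line bundle is a proper analytic subset). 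Since $P_2(X)=1$, the space $H^0(X,\omega_X^{\tensor 2})$ is spanned by $\omega^{\tensor 2}$, so $s\tensor t$ is a nonzero scalar multiple of $\omega^{\tensor 2}$; passing to zero divisors gives $\Div(s)+\Div(t)=2K$, hence $0\leq\Div(s)\leq 2K$. The key point is then that the set of effective divisors dominated by the \emph{fixed} divisor $2K$ is finite (it has $\prod_i(a_i+1)$ elements if $2K=\sum_i a_i C_i$ with the $C_i$ distinct prime divisors), while $\Div(s)$ recovers $L$ via $\OX(\Div(s))\simeq\omega_X\tensor L$. Therefore $L$ would range over a finite set as $L$ ranges over $Z$, which is impossible for a positive-dimensional $Z$. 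The two small things to check carefully are that $s\tensor t\neq 0$ (so that the scalar is nonzero) and that ``effective and $\leq 2K$'' leaves only finitely many choices for $\Div(s)$ — this is exactly where $P_2(X)=1$ becomes rigidity.

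For the ``in particular'' clause, note first that $0\in S^0(X,\omega_X)$ because $P_1(X)=1$. Suppose it is not isolated; then some irreducible component $Z$ of $S^0(X,\omega_X)$ is positive-dimensional and contains $0$. Using the Albanese map $f\colon X\to T$, the projection formula and the Leray spectral sequence identify $S^0(X,\omega_X)$ with $S^0(T,\fl\omega_X)$ under our identification $\Pic^0(X)=\Pic^0(T)$, so \corollaryref{cor:Wang}, applied with $i=j=0$ and $m=1$, shows that $Z$ is a translate of a subtorus by a point of finite order. Since $0\in Z$, the translating point already lies in that subtorus, so $Z$ is itself a subtorus and in particular $Z^{-1}=Z$. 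Thus $Z$ and $Z^{-1}$ are both positive-dimensional and contained in $S^0(X,\omega_X)$, contradicting the first assertion; hence $0$ is isolated. The subtlety worth flagging here is that $S^0(X,\omega_X)$ is not obviously invariant under $L\mapsto L^{-1}$, which is precisely why one must first pass to a component through the origin and use linearity to upgrade it to an honest (symmetric) subtorus before the first assertion can be applied.
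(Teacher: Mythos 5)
Your proposal is correct and follows the same approach as the paper, which simply cites \cite[Proposition~2.1]{EL} together with the remark that the Ein--Lazarsfeld proof carries over verbatim to the K\"ahler case; what you write out --- rigidity from $P_2(X)=1$ via multiplication of sections, plus linearity of $S^0(X,\omega_X)$ to reduce the ``in particular'' clause to the main assertion --- is exactly that proof. One small observation: invoking \corollaryref{cor:Wang} for the linearity step is slight overkill, since the Green--Lazarsfeld structure theorem (valid on compact K\"ahler manifolds by \cite{GL2}) already shows that the components of $S^0(X,\omega_X)$ are translates of subtori, which is all the ``in particular'' clause requires and is what Ein--Lazarsfeld actually use; but \corollaryref{cor:Wang} is of course available at this point in the paper, so there is no issue.
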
 
\begin{proof}
This result is due to Ein and Lazarsfeld \cite[Proposition~2.1]{EL}; they state it
only in the projective case, but their proof actually works without any changes on
arbitrary compact K\"ahler manifolds.
\end{proof} 

\begin{lemma}\label{surjective}
Assume that $S^0 (X, \omega_X)$ contains isolated points. Then the Albanese map of $X$ 
is surjective.
\end{lemma}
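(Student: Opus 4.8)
The plan is to deduce the statement from the generic vanishing machinery already developed, and in particular from \propositionref{sliding}(c); the argument is short and I do not expect a genuine obstacle.

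Let $f \colon X \to T$ denote the Albanese map, write $g = \dim T$, and let $Y = f(X)$ be its image, an irreducible closed analytic subvariety of $T$; what must be shown is that $Y = T$. First I would use the universal property of the Albanese map, which gives an isomorphism $f^{\ast} \colon \Pic^0(T) \xrightarrow{\sim} \Pic^0(X)$; with this identification (already in force throughout the section) the projection formula together with the Leray spectral sequence in degree zero yields $H^0(X, \omX \tensor f^{\ast} L) \simeq H^0(T, f_{\ast}\omX \tensor L)$ for every $L \in \Pic^0(T)$, and hence
\[
	S^0(X, \omX) = S^0(T, f_{\ast}\omX).
\]
By \theoremref{thm:direct_image} applied with $j = 0$, the coherent sheaf $f_{\ast}\omX$ is a GV-sheaf on $T$, and of course $\Supp f_{\ast}\omX \subseteq Y$.

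Next I would invoke the hypothesis: $S^0(X, \omX)$ has a zero-dimensional irreducible component, i.e. there is an isolated point $L_0 \in S^0(T, f_{\ast}\omX)$, and such a component has codimension $g$ in $\Pic^0(T)$. Applying \propositionref{sliding}(c) to the GV-sheaf $f_{\ast}\omX$ then forces $\dim \Supp f_{\ast}\omX \geq g$, so that $\dim Y \geq g = \dim T$. Since $Y$ is a closed irreducible analytic subvariety of the connected manifold $T$, this gives $Y = T$, which is exactly the surjectivity of $f$.

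The only steps deserving a word of justification are the identification $\Pic^0(X) \simeq \Pic^0(T)$ (standard, from the Albanese property) and the GV-property of $f_{\ast}\omX$, which is precisely \theoremref{thm:direct_image}; everything else is formal. In other words, the entire content of the lemma is absorbed into \propositionref{sliding}(c), and there is no serious difficulty to overcome.
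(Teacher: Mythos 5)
Your argument is correct and matches the paper's proof essentially verbatim: both identify $S^0(X,\omX) = S^0(T, \fl\omX)$, invoke \theoremref{thm:direct_image} (for $j=0$) to get the GV-property of $\fl\omX$, and then apply \propositionref{sliding}(c) to the isolated component to force $\dim\Supp\fl\omX \geq \dim T$. The only difference is that you spell out the projection-formula identification explicitly, which the paper leaves implicit.
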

\begin{proof}
By \theoremref{thm:direct_image} (for $j = 0$), $\fl \omega_X$ is a GV-sheaf.
\propositionref{sliding}
shows that any isolated point in $S^0 (T, \fl \omega_X) = S^0 (X, \omega_X)$ also
belongs to $S^{\dim T}(T, \fl \omega_X)$; but this is only possible if the support of
$\fl \omX$ has dimension at least $\dim T$.
\end{proof}

To prove \theoremref{torus}, we follow the general strategy introduced in
\cite[\S4]{Pareschi}, which in turn is inspired by \cite{EL,CH2}. The crucial new
ingredient is of course \theoremref{thm:direct_image}, which had only been known in the
projective case. Even in the projective case however, the argument below is 
substantially cleaner than the existing proofs; this is due to \corollaryref{cor:gen-finite}.

\begin{proof}[Proof of \theoremref{torus}]
The Albanese map $f \colon X \to T$ is surjective by \lemmaref{isolated} and
\lemmaref{surjective}; since $h^{1,0}(X) = \dim X$, this means that $f$ is
generically finite.  To conclude the proof, we just have to argue that $f$ has degree
one; more precisely, we shall use \theoremref{thm:direct_image} to show that $\fl \omX
\simeq \OT$. 

As a first step in this direction, let us prove that $\dim S^0(T, \fl \omX) = 0$. If
\[
	S^0(T, \fl \omX) = S^0(X, \omX)
\]
had an irreducible component $Z$ of positive dimension, \corollaryref{cor:gen-finite}
would imply that $Z^{-1}$ is contained in $S^0(X, \omX)$ as well. As this would
contradict \lemmaref{isolated}, we conclude that $S^0(T, \fl \omX)$ is
zero-dimensional.

Now $\fl \omX$ is a GV-sheaf by \theoremref{thm:direct_image}, and so \propositionref{sliding}
shows that
\[
	S^0(T, \fl \omX) = S^{\dim T}(T, \fl \omX).
\]
Since $f$ is generically finite, \theoremref{takegoshi} implies that $R^j \fl \omX = 0$ for $j > 0$, which
gives
\[
	S^{\dim T}(T, \fl \omX) = S^{\dim T}(X, \omX) 
		= S^{\dim X}(X, \omX) = \{\OT\}.
\]
Putting everything together, we see that $S^0(T, \fl \omX) = \{\OT\}$.

We can now use the Chen-Jiang decomposition for $\fl \omX$ to get more information. 
The decomposition in \theoremref{thm:direct_image} (for $j = 0$) implies that
\[
	\{\OT\} = S^0(T, \fl \omX) = \bigcup_{k=1}^n L_k^{-1} \tensor \Pic^0(T_k),
\]
where we identify $\Pic^0(T_k)$ with its image in $\Pic^0(T)$. This equality forces
$\fl \omX$ to be a trivial bundle of rank $n$; but then
\[
	n = \dim H^{\dim T}(T, \fl \omX) = \dim H^{\dim X}(X, \omX) = 1,
\]
and so $\fl \omX \simeq \OT$.	The conclusion is that $f$ is generically
finite of degree one, and hence birational, as asserted by the theorem.
\end{proof}

\subsection{Connectedness of the fibers of the Albanese map}

As another application, one obtains the following analogue of an effective version of
Kawamata's theorem on the connectedness of the fibers of the Albanese map, proved by
Jiang \cite[Theorem 3.1]{jiang} in the projective setting. Note that the statement is
more general than \theoremref{torus}, but uses it in its proof.

\begin{theorem}\label{thm:jiang}
Let $X$ be a compact K\"ahler manifold with $P_1(X) = P_2 (X) = 1$. Then the Albanese map of $X$ is 
surjective, with connected fibers.
\end{theorem}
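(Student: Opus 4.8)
The plan is to reduce the statement to \theoremref{torus} by means of the Stein factorization of the Albanese map, using \theoremref{thm:direct_image} to control the behavior of plurigenera under the relevant maps. First I would recall that $P_1(X) = 1$ already implies, via \lemmaref{isolated} and \lemmaref{surjective} applied to the Albanese map $f \colon X \to T$, that $f$ is surjective; the real content is the connectedness of the fibers. So write $f = g \circ h$ for the Stein factorization, where $h \colon X \to Z$ has connected fibers onto a normal compact complex space $Z$ and $g \colon Z \to T$ is finite. Since $T$ is smooth, $g$ is in fact a ramified covering; the goal becomes to show $\deg g = 1$.

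The key step is to produce a compact K\"ahler manifold $X'$ (a resolution of $Z$, equipped with a K\"ahler structure via \cite{Varouchas}) together with a generically finite map $X' \to T$, and to argue that $X'$ satisfies the hypotheses of \theoremref{torus}, i.e.\ that $P_1(X') = P_2(X') = 1$ and $h^{1,0}(X') = \dim X'= \dim T$. The equality of irregularities is clear since $X \to Z$ has connected fibers, so $H^1(Z,\shO_Z) = H^1(X,\shO_X)$ and the Albanese factors through $Z$. For the plurigenera, the inequalities $P_m(X') \leq P_m(X)$ hold because $X'$ is (birational to) a subvariety-like cover sitting below $X$; more precisely, one uses that $h_* \omega_X$ surjects onto $\omega_Z$-type sheaves and that pluricanonical forms on $X'$ pull back to pluricanonical forms on $X$, forcing $P_m(X') \leq P_m(X) = 1$ for $m = 1, 2$, while $P_m(X') \geq 1$ always. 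Then \theoremref{torus} applies to $X'$, so $X'$ — hence $Z$ — is bimeromorphic to a torus, and in particular $\deg g = \deg(X' \to T) = 1$ by comparing $h^{1,0}$ and using that the Albanese map of a torus is an isomorphism. This gives that $f$ has connected fibers.

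Alternatively, and perhaps more cleanly, I would run the argument from the proof of \theoremref{torus} directly on $f \colon X \to T$ without passing through $X'$: \theoremref{thm:direct_image} gives the Chen-Jiang decomposition $f_* \omega_X \simeq \bigoplus_k (q_k^* \shF_k \tensor L_k)$, and \corollaryref{cor:gen-finite} — which requires $f$ generically finite, hence requires knowing first that $\dim f(X) = \dim X$, which is \emph{not} assumed here — would need to be replaced by a version valid for the Stein factorization. So the cleanest route really is: apply the generically-finite case to $g \colon Z \to T$ after resolving, deduce $Z$ is a torus, conclude $g$ is an isomorphism.

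The main obstacle I expect is the bookkeeping with plurigenera under the Stein factorization: one must check carefully that $P_2(X') = 1$, and this requires understanding how $h_* \omega_X^{\tensor 2}$ relates to $\omega_{X'}^{\tensor 2}$ when $h$ has positive-dimensional fibers — the fibers of $h$ need not be rationally connected or even have vanishing plurigenera a priori, so one cannot naively assert $P_2(X') = P_2(X)$. The resolution is that the fibers of $h$ over a general point are connected with $h^{1,0} = 0$ (since the Albanese already separates as much as possible), but one genuinely needs an argument — likely via the K\"ahler version of Koll\'ar's results on $\omega_{X/Z}$, i.e.\ \theoremref{takegoshi} and the splitting theorem of Saito — to pin down that $P_2(X') \leq P_2(X)$. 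Once that inequality is in hand, everything else follows formally from \theoremref{torus}.
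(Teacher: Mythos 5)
Your proposal follows essentially the same route as the paper: Stein factorization, resolve the base, use Varouchas to make it compact K\"ahler, establish $P_m \leq P_m(X)$ for the base, and conclude via \theoremref{torus}. On the plurigenera step that you correctly flag as the crux, the paper's mechanism is slightly cleaner than what you sketch: rather than comparing $h_*\omega_X^{\otimes 2}$ with $\omega_{X'}^{\otimes 2}$ directly, one proves (as in Jiang's projective argument) that $H^0(X, \omega_{X/X'}) \neq 0$, which then gives $P_m(X') \leq P_m(X)$ for \emph{every} $m$ at once by multiplying sections of $\omega_{X'}^{\otimes m}$ by the $m$-th power of a nonzero relative canonical form; the K\"ahler inputs needed for this nonvanishing are exactly \theoremref{takegoshi} (for the Leray degeneration of the pushforward of $\omega_X$ along the connected-fiber map) and the GV property of $f_*\omega_X$ from \theoremref{thm:direct_image}, as you anticipated.
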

\begin{proof}
The proof goes entirely along the lines of \cite{jiang}. We only indicate the necessary modifications in the K\"ahler case.
We have already seen that the Albanese map $f \colon X \to T$ is surjective. Consider
its Stein factorization 
\[
\begin{tikzcd}[column sep=large]
X \dar{g} \drar{f} &  \\
Y \rar{h} &  T.
\end{tikzcd}
\]
Up to passing to a resolution of singularities and allowing $h$ to be generically
finite, we can assume that $Y$ is a compact complex manifold. Moreover, by
\cite[Th\'eor\`eme 3]{Varouchas}, after performing a further bimeromorphic
modification, we can assume that $Y$ is in fact compact K\"ahler.  This does not
change the hypothesis $P_1(X) = P_2(X) = 1$.

The goal is to show that $Y$ is bimeromorphic to a torus, which is enough to
conclude. If one could prove that $P_1(Y) = P_2 (Y) = 1$, then 
\theoremref{torus} would do the job. In fact, one can show precisely as in
\cite[Theorem 3.1]{jiang} that $H^0 (X, \omega_{X/Y}) \neq 0$, and consequently that
\[
	P_m (Y) \le P_m (X) \quad \text{for all $m \geq 1$.}
\]
The proof of this statement needs the degeneration of the Leray spectral sequence for
$g_* \omega_X$, which follows from  \theoremref{takegoshi}, and the fact that $f_*
\omega_X$ is a GV-sheaf, which follows from \theoremref{thm:direct_image}. Besides this, the
proof is purely Hodge-theoretic, and hence works equally well in the K\"ahler case.
\end{proof}

\subsection{Semi-positivity of higher direct images}
\label{par:semi-positivity}

In the projective case, GV-sheaves automatically come with positivity properties;
more precisely, on abelian varieties it was proved in \cite[Corollary~3.2]{Debarre}
that $M$-regular sheaves are ample, and in \cite[Theorem~ 4.1]{PP2} that GV-sheaves
are nef. Due to \theoremref{thm:Chen-Jiang} a stronger result in fact holds true,
for arbitrary graded quotients of Hodge modules on compact complex tori.

Recall that to a coherent sheaf $\shF$ on a compact complex manifold one can
associate the analytic space $\PP (\shF) = \PP \left( {\rm Sym}^\bullet \shF
\right)$, with a natural mapping to $X$ and a line bundle $\shO_{\PP(\shF)} (1)$. If
$X$ is projective, the sheaf $\shF$ is called \emph{ample} if the line bundle
$\shO_{\PP(\shF)} (1)$ is ample on $\PP(\shF)$.

\begin{corollary}\label{cor:MHM_positivity}
Let $M = (\Mmod, F_{\bullet} \Mmod, \ratM)$ be a polarizable real Hodge
module on a compact complex torus $T$. Then, for each $k \in \ZZ$, the coherent
$\OT$-module $\gr_k^F \Mmod$ admits a decomposition
\[
	\gr_k^F \Mmod \simeq \bigoplus_{j=1}^n 
		\bigl( \qu_j \shF_j \tensor_{\OT} L_j \bigr),
\]
where $q_j \colon T \to T_j$ is a quotient torus, $\shF_j$ is an
ample coherent $\shO_{T_j}$-module whose support $\Supp \shF_j$ is projective,
and $L_j \in \Pic^0(T)$.
\end{corollary}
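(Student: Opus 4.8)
The plan is to deduce this directly from the Chen-Jiang decomposition already established in \theoremref{thm:Chen-Jiang}, upgrading the conclusion from M-regularity to ampleness. Indeed, \theoremref{thm:Chen-Jiang} gives a decomposition $\gr_k^F \Mmod \simeq \bigoplus_{j=1}^n \bigl( \qu_j \shF_j \tensor_{\OT} L_j \bigr)$ with each $q_j \colon T \to T_j$ a surjective morphism with connected fibers, each $L_j \in \Pic^0(T)$, and each $\shF_j$ an M-regular coherent sheaf on $T_j$ whose support $\Supp \shF_j$ is projective. So the entire content of the corollary is the implication ``M-regular with projective support $\Rightarrow$ ample''. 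Since $\Supp \shF_j$ is projective, it is contained in a translate of an abelian subvariety $A_j \subseteq T_j$; by Kashiwara's equivalence for coherent sheaves, $\shF_j$ is (the pushforward of) a coherent sheaf on the abelian variety $A_j$, and $\PP(\shF_j)$ together with $\shO_{\PP(\shF_j)}(1)$ is unchanged under this identification, so ampleness on $T_j$ is the same as ampleness on $A_j$.

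It therefore remains to observe that an M-regular coherent sheaf on an abelian variety is ample; this is precisely \cite[Corollary~3.2]{Debarre}, which I would cite (the statement is recalled in \parref{par:semi-positivity}). One subtlety is that Debarre's result concerns abelian varieties, whereas here $A_j$ is an abelian variety sitting inside the compact complex torus $T_j$ and $\shF_j$ is a priori M-regular \emph{as a sheaf on $T_j$}; I would note that M-regularity of $\shF_j$ on $T_j$, by \parref{par:GV-sheaves} together with the compatibility of cohomology support loci under the restriction map $\Pic^0(T_j)\to\Pic^0(A_j)$, implies M-regularity of $\shF_j$ viewed as a sheaf on $A_j$ (the support being contained in $A_j$, the relevant twists $\shF_j\tensor L$ with $L\in\Pic^0(A_j)$ already compute all the cohomology that matters). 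This reduction is routine given the remarks in \parref{par:GV-sheaves} that all the Fourier-Mukai formalism carries over verbatim from abelian varieties.

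The main obstacle, such as it is, is purely bookkeeping: one must make sure the quotient tori and characters produced by \theoremref{thm:Chen-Jiang} are exactly the data required in the statement, and that passing from ``M-regular on $T_j$'' to ``ample on the projective subvariety $\Supp\shF_j$'' is done cleanly via Kashiwara's equivalence and Debarre's theorem. No new ideas are needed beyond \theoremref{thm:Chen-Jiang}; the corollary is essentially a repackaging of it together with the known fact that M-regularity implies ampleness on abelian varieties. I would close by remarking that the hypothesis that $M$ be merely \emph{real} (rather than admitting an integral structure) is harmless here, since the finite-order conclusion on the $L_j$ is not claimed in this corollary.
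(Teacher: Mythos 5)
Your proposal is correct and follows the same route as the paper: apply \theoremref{thm:Chen-Jiang} to get a Chen-Jiang decomposition into M-regular summands with projective support, and then invoke \cite[Corollary~3.2]{Debarre} to upgrade M-regularity to ampleness. One small inaccuracy worth flagging: there is no ``Kashiwara's equivalence for coherent sheaves'' --- an arbitrary coherent sheaf supported on a submanifold $A_j \subseteq T_j$ need not be a pushforward from $A_j$ (think of $\shO_{T_j}/\shI_{A_j}^2$). What legitimizes the reduction is that $\shF_j = \gr_k^F \Nmod_j'$ is a graded piece of a (complex) Hodge module with support in $A_j$, so \lemmaref{lem:Kashiwara} applies and forces $\shF_j$ to be an $\shO_{A_j}$-module; this is precisely the reduction already carried out in the proof of \lemmaref{lem:M-regular}, so no new argument is needed. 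Your discussion of the compatibility of M-regularity under the surjection $\Pic^0(T_j) \to \Pic^0(A_j)$ is correct and makes explicit a step the paper leaves implicit.
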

\begin{proof}
By \theoremref{thm:Chen-Jiang} we have a decomposition as in the statement, where
each $\shF_j$ is an $M$-regular sheaf on the abelian variety generated by its
support.  But then \cite[Corollary~3.2]{Debarre} implies that each $\shF_j$ is ample.
\end{proof}

The ampleness part in \theoremref{thm:direct_image} is then a consequence of the
proof in \parref{par:ChenJiang} and the statement above. It implies that higher
direct images of canonical bundles have a strong semi-positivity property
(corresponding to semi-ampleness in the projective setting). Even the following very
special consequence seems to go beyond what can be said for arbitrary holomorphic
mappings of compact K\"ahler manifolds (see e.g. \cite{MT} and the references
therein). 

\begin{corollary} \label{cor:semi-pos}
Let $f: X \rightarrow T$ be a surjective holomorphic mapping from a compact K\"ahler
manifold to a complex torus. If $f$ is a submersion outside of a simple normal
crossings divisor on $T$, then each $R^i f_* \omega_X$ is locally free and admits a
smooth hermitian metric with semi-positive curvature (in the sense of Griffiths).
\end{corollary}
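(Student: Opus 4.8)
The plan is to combine the Chen--Jiang decomposition of \theoremref{thm:direct_image} with two further ingredients: that $R^i \fl \omX$ is locally free when $f$ is a submersion outside a normal crossing divisor, and that an M-regular sheaf on an abelian variety is a quotient of a flat unitary vector bundle. By \theoremref{thm:direct_image} we have
\[
	R^i \fl \omX \simeq \bigoplus_{k=1}^n \bigl( \qu_k \shF_k \tensor_{\OT} L_k \bigr),
\]
with $q_k \colon T \to T_k$ surjective with connected fibers, $\shF_k$ M-regular with projective support on the complex torus $T_k$, and $L_k \in \Pic^0(T)$. I would first show that the whole sheaf is locally free, and then that each summand carries a smooth metric with semi-positive curvature in the sense of Griffiths.

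The hypothesis that $f$ be a submersion away from a simple normal crossing divisor $D \subseteq T$ is what forces local freeness. As in the proof of \theoremref{thm:direct_image}, $R^i \fl \omX \simeq \omega_T \tensor F_{p(M)} \Mmod$, where $M \in \HM{T}{\dim X + i}$ is the strict-support-$T$ summand of $\shH^i \fl \RR_X \decal{\dim X}$. Since $f$ is smooth over $T \setminus D$, the Hodge module $M$ is the minimal extension across $D$ of the polarizable variation of Hodge structure that $f$ defines on $T \setminus D$, a variation of geometric origin, so its local monodromy around $D$ is quasi-unipotent. By Schmid's nilpotent orbit theorem \cite{Schmid} and Saito's description of the Hodge filtration of a minimal extension near a normal crossing divisor, the lowest nonzero piece $F_{p(M)} \Mmod$ is then locally free, so $R^i \fl \omX$ is locally free on all of $T$. (In the projective case this is classical, going back to Koll\'ar and Kawamata; in the K\"ahler setting it rests on \cite{Saito-Kae} and \cite{Takegoshi}.)

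Granting this, each nonzero summand $\qu_k \shF_k \tensor_{\OT} L_k$ is a direct summand of a locally free sheaf, hence locally free; being supported on $\qu_k(\Supp \shF_k)$, it can only be locally free if $\Supp \shF_k = T_k$, so $\shF_k$ has full support and $T_k$, a projective complex torus, is an abelian variety. Since $q_k$ is surjective, comparing fiber dimensions on the two sides of the isomorphism shows $\shF_k$ has constant fiber dimension equal to its rank, hence is locally free on $T_k$. Now, being M-regular on an abelian variety, $\shF_k$ is continuously globally generated \cite{PP4}; since $T_k$ is compact, finitely many flat unitary line bundles $L_{k,1}, \dots, L_{k,m_k} \in \Pic^0(T_k)$ already give a surjection $\bigoplus_j L_{k,j} \onto \shF_k$. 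As $\shF_k$ is locally free, this exhibits it as a quotient \emph{bundle} of a flat unitary bundle, whose quotient metric is smooth with semi-positive curvature in the sense of Griffiths. Pulling this metric back by $q_k$, tensoring with a flat hermitian metric on $L_k$, and summing over $k$ produces a smooth hermitian metric on $R^i \fl \omX$ with Griffiths semi-positive curvature, since Griffiths semi-positivity is preserved under pullback, tensoring with a flat line bundle, and direct sums.

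The main obstacle is the local freeness step. The Chen--Jiang decomposition by itself permits summands $\shF_k$ supported on proper subvarieties of $T_k$, which would make $R^i \fl \omX$ fail to be locally free; the submersion-outside-$D$ hypothesis is genuinely needed to exclude this, and must be turned into local freeness of the lowest Hodge piece of a minimal extension across a normal crossing divisor, in the K\"ahler setting. Everything else -- the rank count and the stability of Griffiths semi-positivity under the three operations above -- is formal.
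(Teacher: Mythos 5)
Your proof is correct and follows essentially the same route as the paper: combine the Chen--Jiang decomposition of \theoremref{thm:direct_image} with local freeness of $R^i \fl \omX$, and conclude by using that an M-regular locally free sheaf on an abelian variety is continuously globally generated, hence a quotient bundle of a flat unitary bundle and so carries a smooth metric with Griffiths semi-positive curvature. The only difference is that the paper obtains local freeness by citing Takegoshi's Theorem~V directly, whereas you sketch the Hodge-theoretic argument behind that result; the sketch is in the right direction but ultimately defers to the same sources (\cite{Takegoshi}, \cite{Saito-Kae}), so this is not a genuinely independent route.
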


\begin{proof}
Note that if $f$ is surjective, then \theoremref{takegoshi} implies that $R^i f_*
\omega_X$ are all torsion free.  If one assumes in addition that $f$ is a submersion
outside of a simple normal crossings divisor on $T$, then they are locally free; see
\cite[Theorem~V]{Takegoshi}. Because of the decomposition in
\theoremref{thm:direct_image}, it is therefore enough to show that an M-regular
locally free sheaf on an abelian variety always admits a smooth hermitian metric with
semi-positive curvature. But this is an immediate consequence of the fact that
M-regular sheaves are continuously globally generated \cite[Proposition~2.19]{PP1}.
\end{proof}

The existence of a metric with semi-positive curvature on a vector bundle $E$ implies
that the line bundle $\shO_{\PP(E)}(1)$ is nef, but is in general known to be a
strictly stronger condition. \corollaryref{cor:semi-pos} suggests the following
question.

\begin{problem}
Let $T$ be a compact complex torus. Suppose that a locally free sheaf $\shE$ on $T$
admits a smooth hermitian metric with semi-positive curvature (in the sense of
Griffiths or Nakano). Does this imply the existence of a decomposition
\[
	\shE \simeq \bigoplus_{k=1}^n \bigl( \qu_k \shE_k \tensor L_k \bigr)
\]
as in \theoremref{thm:direct_image}, in which each locally free sheaf $\shE_k$
has a smooth hermitian metric with strictly positive curvature?
\end{problem}

\subsection{Leray filtration}

Let $f \colon X \to T$ be a holomorphic mapping from a compact K\"ahler manifold $X$
to a compact complex torus $T$.  We use \theoremref{thm:direct_image} to describe the Leray
filtration on the cohomology of $\omX$, induced by the Leray spectral sequence
associated to $f$.  Recall that, for each $k$, the Leray filtration on $H^k(X, \omX)$
is a decreasing filtration $L^{\bullet} H^k(X, \omX)$ with the property that
\[
	\gr_L^i H^k(X, \omX) = H^i \bigl( T, R^{k-i} \fl \omX \bigr).
\]
On the other hand, one can define a natural decreasing filtration $F^{\bullet} H^k
(X, \omega_X)$ induced by the action of $H^1(T, \shO_T)$, namely 
\[
	F^i H^k(X, \omX) = \Im \left( \bigwedge^i H^1(T, \shO_T) \tensor H^{k-i} (X, \omX) 
		\to H^{k}(X, \omX)\right).
\]
It is obvious that the image of the cup product mapping
\begin{equation} \label{eq:cup-product}
	H^1(T, \shO_T) \tensor L^i H^k(X, \omX) \to H^{k+1}(X, \omX)
\end{equation}
is contained in the subspace $L^{i+1} H^{k+1}(X, \omX)$. This implies that 
\[
	F^i H^k (X, \omega_X) \subseteq L^i H^k (X, \omega_X) 
		\quad \text{for all $i \in \ZZ$.}
\]
This inclusion is actually an equality, as shown by the following result.

\begin{theorem}\label{leray}
The image of the mapping in \eqref{eq:cup-product} is equal to $L^{i+1} H^{k+1}(X,
\omX)$. Consequently, the two filtrations $L^{\bullet} H^k(X, \omX)$ and $F^{\bullet}
H^k(X, \omX)$ coincide.
\end{theorem}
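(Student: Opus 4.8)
The plan is to deduce the theorem from a single surjectivity statement about cup products on the cohomology of the sheaves $R^j \fl \omX$, which will in turn follow from the Chen--Jiang decomposition in \theoremref{thm:direct_image} together with the $BGG$-type results of \cite{LPS}. Since we already know that $F^i H^k(X,\omX) \subseteq L^i H^k(X,\omX)$, the whole content is the reverse inclusion, which is exactly the first assertion: that the image of \eqref{eq:cup-product} equals $L^{i+1} H^{k+1}(X,\omX)$.

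First I would record how the cup-product action of $H^1(T,\shO_T)$ interacts with the Leray filtration. Because the Leray spectral sequence for $\omX$ degenerates at $E_2$ (\theoremref{takegoshi}) and the Leray filtration is multiplicative, cupping with a class in $\fu H^1(T,\shO_T)$ sends $L^i H^k(X,\omX)$ into $L^{i+1} H^{k+1}(X,\omX)$, and the induced map on the associated graded
\[
	H^1(T,\shO_T) \tensor \gr_L^i H^k(X,\omX) \longrightarrow \gr_L^{i+1} H^{k+1}(X,\omX)
\]
is identified, under $\gr_L^i H^k(X,\omX) \simeq H^i(T, R^{k-i}\fl\omX)$, with the cup-product map
\[
	H^1(T,\shO_T) \tensor H^i\bigl(T, R^{k-i}\fl\omX\bigr) \longrightarrow H^{i+1}\bigl(T, R^{k-i}\fl\omX\bigr)
\]
using only the $\shO_T$-module structure on $R^{k-i}\fl\omX$ (the sheaves $R^j\fl\shO_X$ with $j\ge 1$ push the product into a deeper step of the filtration, which is the one small point that has to be checked carefully). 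A descending induction on the index of the (finite) filtration then upgrades surjectivity of these graded maps to the statement that the image of $\bigwedge^i H^1(T,\shO_T) \tensor H^{k-i}(X,\omX) \to H^k(X,\omX)$ is exactly $L^i H^k(X,\omX)$, which is the theorem. So everything is reduced to showing that
\[
	H^1(T,\shO_T) \tensor H^{a-1}\bigl(T, R^j\fl\omX\bigr) \longrightarrow H^a\bigl(T, R^j\fl\omX\bigr)
\]
is surjective for every $a \ge 1$ and $j \ge 0$.

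To prove this I would apply \theoremref{thm:direct_image} to write $R^j\fl\omX \simeq \bigoplus_k \bigl(\qu_k \shF_k \tensor L_k\bigr)$ with $\shF_k$ an $M$-regular sheaf on $T_k$ with projective support and $q_k \colon T \to T_k$ surjective with connected fibres. Cohomology and cup products being additive, it suffices to treat one summand. If $L_k$ restricts non-trivially to the fibre $\ker q_k$, that summand has vanishing cohomology and may be discarded; otherwise $L_k = \qu_k L_k'$ for some $L_k' \in \Pic^0(T_k)$, and since $q_k$ is a holomorphic torus bundle the projection formula gives
\[
	H^a\bigl(T, \qu_k(\shF_k \tensor L_k')\bigr) \simeq \bigoplus_{i+j=a} H^i\bigl(T_k, \shF_k \tensor L_k'\bigr) \tensor {\textstyle\bigwedge}^{\,j} H^1(\ker q_k, \shO),
\]
compatibly with $H^1(T,\shO_T) = H^1(T_k,\shO_{T_k}) \oplus H^1(\ker q_k,\shO)$; as the second tensor factor is a free exterior module generated in degree $0$, surjectivity for this summand reduces to the same statement for the $M$-regular sheaf $\shF_k \tensor L_k'$ on $T_k$. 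Finally, $\shF_k$ is supported on an abelian subvariety, so we are reduced to the purely algebraic assertion: for an $M$-regular coherent sheaf $\shG$ on an abelian variety $A$, the graded module $\bigoplus_a H^a(A,\shG)$ over $\bigwedge^{\bullet} H^1(A,\shO_A) = H^{\bullet}(A,\shO_A)$ is generated in degree $0$. This is exactly where the results of \cite{LPS} are used; alternatively it can be extracted from the Fourier--Mukai characterization of $M$-regularity, namely the torsion-freeness of the transform $\shFh$ in \eqref{basic}.

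I expect this last algebraic input — the passage from $M$-regularity to generation of the cohomology module in degree zero — to be the only genuine obstacle, since it is the sole non-formal ingredient; the compatibility of cup product with the Leray filtration, the finite induction turning graded surjectivity into the equality $F^\bullet = L^\bullet$, and the torus-bundle computation are all routine. It is worth noting that the weaker GV-property of $R^j\fl\omX$ (\corollaryref{cor:MHM-GV}) does not obviously suffice for this algebraic step, which is why the full Chen--Jiang decomposition of \theoremref{thm:direct_image}, and not merely generic vanishing, enters the proof.
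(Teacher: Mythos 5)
Your proposal has the same skeleton as the paper's proof — reduce to surjectivity of the cup-product maps
\[
H^1(T,\shO_T)\tensor H^{a-1}\bigl(T, R^j\fl\omX\bigr)\longrightarrow H^a\bigl(T, R^j\fl\omX\bigr)
\]
on the Leray graded pieces, then run the finite induction along the Leray filtration to get $F^\bullet = L^\bullet$. Where you diverge from the paper is in how you establish that surjectivity. The paper's proof is a one-liner: it cites \cite[Theorem~A]{LPS}, which says the exterior module $Q_X^j=\bigoplus_i H^i(T,R^j\fl\omX)$ over $\bigwedge^\bullet H^1(T,\shO_T)$ is $0$-regular (hence generated in degree~$0$), and notes in a parenthetical that the LPS proof goes through on compact complex tori once one knows $R^j\fl\omX$ is a GV-sheaf (which \theoremref{thm:direct_image} supplies). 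You instead feed \theoremref{thm:direct_image} in at full strength: decompose $R^j\fl\omX$ into Chen--Jiang pieces $\qu_k\shF_k\tensor L_k$, use the torus-bundle K\"unneth decomposition to strip off the $\ker q_k$ direction, and reduce to the statement that the cohomology module of an M-regular sheaf on an abelian variety is generated in degree~$0$. That reduction is correct, and it has the virtue of never needing to assert that the LPS argument carries over to tori --- you land squarely in the abelian-variety setting where LPS applies verbatim.

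One place where your commentary is off: you close by claiming that the GV property of $R^j\fl\omX$ ``does not obviously suffice'' for the algebraic step and that the full Chen--Jiang decomposition is what really enters. This is the opposite of what the paper does: its proof uses \emph{only} the GV property (plus the Fourier--Mukai machinery on tori established in \theoremref{mukai}), because the $0$-regularity statement in \cite{LPS} is a consequence of GV via the BGG correspondence, not of M-regularity. Your caution leads you to a valid but longer argument; the paper's route is shorter because GV is, in fact, enough. The other technical points you flag (compatibility of the cup product with the Leray filtration via $E_2$-degeneration and multiplicativity, the torus-bundle K\"unneth isomorphism for $\qu_k$, the finite induction on the Leray index) are all fine and are either used implicitly or stated without proof in the paper as well.
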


\begin{proof}
By \cite[Theorem~A]{LPS}, the graded module
\[
	Q_X^j = \bigoplus_{i=0}^{\dim T} H^i \bigl( T, R^j \fl \omX \bigr)
\]
over the exterior algebra on $H^1(T, \shO_T)$ is $0$-regular, hence generated in
degree $0$. (Since each $R^j \fl \omX$ is a GV-sheaf by \theoremref{thm:direct_image}, the
proof in \cite{LPS} carries over to the case where $X$ is a compact K\"ahler
manifold.) This means that the cup product mappings
\[
	\bigwedge^i H^1(T, \shO_T) \tensor H^0 \bigl( T, R^j \fl \omX \bigr)
		\to H^i \bigl( T, R^j \fl \omX \bigr)	
\]
are surjective for all $i$ and $j$, which in turn implies that the mappings
$$H^1(T, \shO_T) \otimes \gr_L^i H^k(X, \omX) \to \gr_L^{i+1} H^{k+1} (X, \omX)$$ 
are surjective for all $i$ and $k$. This implies the assertion by ascending induction.
\end{proof}

If we represent cohomology classes by smooth forms, Hodge conjugation and Serre
duality provide for each $k \geq 0$ a hermitian pairing
$$H^0 (X, \Omega_X^{n-k}) \times H^k (X, \omega_X) \rightarrow \CC, \,\,\,\,(\alpha, \beta) \mapsto 
\int_X \alpha \wedge \overline{\beta},$$
where $n = \dim X$. The Leray filtration on $H^k (X , \omega_X)$ therefore induces a filtration on 
$H^0 (X, \Omega_X^{n-k})$; concretely, with a numerical convention which again gives us a decreasing 
filtration with support in the range $0, \ldots, k$, we have
\[
	L^i H^0 (X, \Omega_X^{n-k}) = \menge{\alpha \in H^0(X, \OmX^{n-k})}%
		{\alpha \perp L^{k+1 - i} H^k (X, \omega_X)}.
\]
Using the description of the Leray filtration in \theoremref{leray}, and the elementary fact that 
$$\int_X \alpha \wedge \overline{\theta\wedge \beta} = \int_X \alpha \wedge \overline{\theta} \wedge 
\overline{\beta}$$
for all $\theta \in H^1 (X, \shO_X)$, we can easily deduce that $L^i H^0 (X, \Omega_X^{n-k})$ consists of those 
holomorphic $(n-k)$-forms whose wedge product with 
$$\bigwedge^{k+1-i} H^0 (X, \Omega_X^1)$$
vanishes. In other words, for all $j$ we have:

\begin{corollary}\label{cor:Leray_forms}
The induced Leray filtration on $H^0 (X, \Omega_X^j)$ is given by 
\[
	L^i H^0 (X, \Omega_X^j) = \Menge{\alpha \in H^0(X, \OmX^j)}%
		{\alpha \wedge \bigwedge^{n+1 - i - j} H^0 (X, \Omega_X^1) = 0}.
\]
\end{corollary}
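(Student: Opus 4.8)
The plan is to prove \corollaryref{cor:Leray_forms} as the Serre-dual reformulation of \theoremref{leray}. Write $j = n-k$ with $n = \dim X$, so that the assertion concerns $H^0(X,\Omega_X^{n-k})$, paired with $H^k(X,\omega_X)$ by the perfect pairing $(\alpha,\beta)\mapsto\int_X\alpha\wedge\overline\beta$ recalled above. By the definition of the induced filtration, $L^iH^0(X,\Omega_X^{n-k})$ is the annihilator of $L^{k+1-i}H^k(X,\omega_X)$, so it suffices to describe that annihilator. By \theoremref{leray}, $L^{k+1-i}H^k(X,\omega_X)=F^{k+1-i}H^k(X,\omega_X)$ is spanned by the classes $\theta_1\cup\cdots\cup\theta_{k+1-i}\cup\gamma$, where $\theta_1,\dots,\theta_{k+1-i}\in H^1(T,\shO_T)$ (pulled back to $X$) and $\gamma\in H^{k-(k+1-i)}(X,\omega_X)=H^{i-1}(X,\omega_X)$.

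First I would pair such a class with a holomorphic form $\alpha\in H^0(X,\Omega_X^{n-k})$. Representing each cup product by the wedge product of Dolbeault (harmonic) representatives and using that conjugation of forms is multiplicative — the elementary identity $\int_X\alpha\wedge\overline{\theta\wedge\beta}=\int_X\alpha\wedge\overline\theta\wedge\overline\beta$ quoted before the corollary — one obtains
\[
	\int_X\alpha\wedge\overline{\theta_1\cup\cdots\cup\theta_{k+1-i}\cup\gamma}=\int_X\bigl(\alpha\wedge\overline{\theta_1}\wedge\cdots\wedge\overline{\theta_{k+1-i}}\bigr)\wedge\overline\gamma.
\]
Each $\overline{\theta_p}$ is the conjugate of a class of type $(0,1)$ pulled back from $T$, hence a holomorphic $1$-form pulled back from $T$; therefore $\alpha':=\alpha\wedge\overline{\theta_1}\wedge\cdots\wedge\overline{\theta_{k+1-i}}$ is a holomorphic form of degree $(n-k)+(k+1-i)=n+1-i$, and $\overline\gamma$ represents a class in $\overline{H^{n,i-1}(X)}=H^{i-1,n}(X)=H^n(X,\Omega_X^{i-1})$. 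Thus the right-hand integral is exactly the Serre-duality pairing of $\alpha'\in H^0(X,\Omega_X^{n+1-i})$ against $\overline\gamma\in H^n(X,\Omega_X^{i-1})$.

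Since Serre duality $H^0(X,\Omega_X^{n+1-i})\times H^n(X,\Omega_X^{i-1})\to\CC$ is a perfect pairing, and $\gamma\mapsto\overline\gamma$ carries $H^{i-1}(X,\omega_X)$ onto all of $H^n(X,\Omega_X^{i-1})$, the form $\alpha$ annihilates $\theta_1\cup\cdots\cup\theta_{k+1-i}\cup\gamma$ for every $\gamma$ if and only if the holomorphic form $\alpha\wedge\overline{\theta_1}\wedge\cdots\wedge\overline{\theta_{k+1-i}}$ vanishes (a holomorphic form that is zero in $H^0$ is the zero form). Letting the $\theta_p$ range over $H^1(T,\shO_T)$, we conclude that $\alpha\in L^iH^0(X,\Omega_X^{n-k})$ precisely when $\alpha\wedge\bigwedge^{k+1-i}\overline{H^1(T,\shO_T)}=0$, i.e. when $\alpha$ wedged with any $k+1-i$ of the holomorphic $1$-forms pulled back from $T$ vanishes. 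Rewriting $k+1-i=n+1-i-j$ gives the formula of \corollaryref{cor:Leray_forms}; the numerology is self-consistent at the extremes, since for $i\le 0$ one has $\Omega_X^{n+1-i}=0$ and $L^i=H^0(X,\Omega_X^{n-k})$, while for $i=k+1$ the wedge with $\bigwedge^0$ forces $\alpha=0$ and $L^{k+1}=0$.

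I expect the proof to be routine, so the only care needed concerns two points: (i) tracking Hodge bidegrees so that each intermediate pairing invoked is genuinely the perfect Serre-duality pairing rather than a mere nondegenerate form; and (ii) recognizing that the $1$-forms actually entering the description are exactly the pull-backs from $T$, i.e. $\overline{f^*H^1(T,\shO_T)}\subseteq H^0(X,\Omega_X^1)$ — this is the space meant by $H^0(X,\Omega_X^1)$ in the statement. A final relabeling $k=n-j$ turns the $(n-k)$-form assertion into the stated $j$-form assertion; the same computation, read in the other direction, simultaneously records that the Leray filtration of \theoremref{leray} is the exact annihilator filtration of the one on holomorphic forms.
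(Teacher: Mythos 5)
Your argument is correct and is exactly the route the paper takes, only with the Serre-duality bookkeeping made explicit where the paper says ``we can easily deduce.'' Your closing observation about the $1$-forms is apt: since the $\theta_p$ run over $H^1(T,\shO_T)$, the conjugates land in $f^*H^0(T,\Omega_T^1)\subseteq H^0(X,\Omega_X^1)$, and it is this subspace --- not all of $H^0(X,\Omega_X^1)$ --- that the statement is really about unless $T$ is the Albanese torus (for instance $X=E\times C$ with $C$ of genus at least $2$, projected onto $E$, is a case where the two candidate filtrations genuinely differ).
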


\begin{remark}
It is precisely the fact that we do not know how to obtain this basic description of
the Leray filtration using standard Hodge theory that prevents us from giving a proof
of \theoremref{thm:direct_image} in the spirit of \cite{GL1}, 
and forces us to appeal to the theory of Hodge modules for the main results.
\end{remark}

\subsection*{Acknowledgements}

We thank Christopher Hacon, who first asked us about the behavior of higher direct
images in the K\"ahler setting some time ago, and with whom we have had numerous
fruitful discussions about generic vanishing over the years. We also thank Claude Sabbah
for advice about the definition of polarizable complex Hodge modules, and Jungkai
Chen, J\'anos Koll\'ar, Thomas Peternell, and Valentino Tosatti for useful discussions.

During the preparation of this paper CS has been partially supported by the NSF grant
DMS-1404947, MP by the NSF grant DMS-1405516, and GP by the MIUR PRIN project
``Geometry of Algebraic Varieties''.


\bibliographystyle{amsalpha}
\bibliography{bibliography}

\providecommand{\bysame}{\leavevmode\hbox to3em{\hrulefill}\thinspace}
\providecommand{\MR}{\relax\ifhmode\unskip\space\fi MR }
\providecommand{\MRhref}[2]{%
  \href{http://www.ams.org/mathscinet-getitem?mr=#1}{#2}
}
\providecommand{\href}[2]{#2}
\begin{thebibliography}{BBBP07}

\bibitem[Ara92]{Arapura:GV}
Donu Arapura, \emph{Higgs line bundles, {G}reen-{L}azarsfeld sets, and maps of
  {K}\"ahler manifolds to curves}, Bull. Amer. Math. Soc. (N.S.) \textbf{26}
  (1992), no.~2, 310--314. \MR{1129312 (92m:14010)}

\bibitem[BBBP07]{Ben-Bassat+Block+Pantev:noncomm_FM}
Oren Ben-Bassat, Jonathan Block, and Tony Pantev, \emph{Non-commutative tori
  and {F}ourier-{M}ukai duality}, Compos. Math. \textbf{143} (2007), no.~2,
  423--475. \MR{2309993 (2008h:53157)}

\bibitem[BBD82]{Beilinson+Bernstein+Deligne}
Alexander Be{\u\i}linson, Joseph Bernstein, and Pierre Deligne, \emph{Faisceaux
  pervers}, Analysis and topology on singular spaces, {I} ({L}uminy, 1981),
  Ast\'erisque, vol. 100, Soc. Math. France, Paris, 1982, pp.~5--171.
  \MR{751966 (86g:32015)}

\bibitem[BL99]{Birkenhake+Lange:ComplexTori}
Christina Birkenhake and Herbert Lange, \emph{Complex tori}, Progress in
  Mathematics, vol. 177, Birkh\"auser Boston, Inc., Boston, MA, 1999.
  \MR{1713785 (2001k:14085)}

\bibitem[CH01]{Chen+Hacon:abelian}
Jungkai~A. Chen and Christopher~D. Hacon, \emph{Characterization of abelian
  varieties}, Invent. Math. \textbf{143} (2001), no.~2, 435--447. \MR{1835393
  (2002c:14063)}

\bibitem[CH02]{Chen+Hacon:FiberSpaces}
Jungkai~A. Chen and Christop{}her~D. Hacon, \emph{On algebraic fiber spaces
  over varieties of maximal {A}lbanese dimension}, Duke Math. J. \textbf{111}
  (2002), no.~1, 159--175. \MR{1876444 (2002m:14007)}

\bibitem[CH04]{Chen+Hacon:Varieties}
Jungkai~A. Chen and Christopher~D. Hacon, \emph{Varieties with {$P_3(X)=4$} and
  {$q(X)=\dim(X)$}}, Ann. Sc. Norm. Super. Pisa Cl. Sci. (5) \textbf{3} (2004),
  no.~2, 399--425. \MR{2075989 (2005e:14055)}

\bibitem[CJ13]{Chen+Jiang:VanishingEulerCharacteristic}
Jungkai~A. Chen and Zhi Jiang, \emph{Positivity in varieties of maximal
  {A}lbanese dimension}, to appear in J. reine angew. Math., DOI
  10.1515/crelle-2015-0027, 2013.

\bibitem[CKS86]{Cattani+Kaplan+Schmid:Degeneration}
Eduardo Cattani, Aroldo Kaplan, and Wilfried Schmid, \emph{Degeneration of
  {H}odge structures}, Ann. of Math. (2) \textbf{123} (1986), no.~3, 457--535.
  \MR{840721 (88a:32029)}

\bibitem[Deb06]{Debarre:Ample}
Olivier Debarre, \emph{On coverings of simple abelian varieties}, Bull. Soc.
  Math. France \textbf{134} (2006), no.~2, 253--260. \MR{2233707 (2007j:14063)}

\bibitem[Del87]{Deligne:Finitude}
Pierre Deligne, \emph{Un th\'eor\`eme de finitude pour la monodromie}, Discrete
  groups in geometry and analysis ({N}ew {H}aven, {C}onn., 1984), Progr. Math.,
  vol.~67, Birkh\"auser Boston, Boston, MA, 1987, pp.~1--19. \MR{900821
  (88h:14013)}

\bibitem[EL97]{Ein+Lazarsfeld:Theta}
Lawrence Ein and Robert Lazarsfeld, \emph{Singularities of {T}heta divisors and
  the birational geometry of irregular varieties}, J. Amer. Math. Soc.
  \textbf{10} (1997), no.~1, 243--258. \MR{1396893 (97d:14063)}

\bibitem[GL87]{Green+Lazarsfeld:GV1}
Mark Green and Robert Lazarsfeld, \emph{Deformation theory, generic vanishing
  theorems, and some conjectures of {E}nriques, {C}atanese and {B}eauville},
  Invent. Math. \textbf{90} (1987), no.~2, 389--407. \MR{910207 (89b:32025)}

\bibitem[GL91]{Green+Lazarsfeld:GV2}
Mark Green and Robert Lazars{}feld, \emph{Higher obstructions to deforming
  cohomology groups of line bundles}, J. Amer. Math. Soc. \textbf{4} (1991),
  no.~1, 87--103. \MR{1076513 (92i:32021)}

\bibitem[Hac04]{Hacon:GV}
Christopher~D. Hacon, \emph{A derived category approach to generic vanishing},
  J. Reine Angew. Math. \textbf{575} (2004), 173--187. \MR{2097552
  (2005m:14026)}

\bibitem[Jia11]{Jiang:effective}
Zhi Jiang, \emph{An effective version of a theorem of {K}awamata on the
  {A}lbanese map}, Commun. Contemp. Math. \textbf{13} (2011), no.~3, 509--532.
  \MR{2813500 (2012g:14014)}

\bibitem[Kol86]{Kollar:DualizingII}
J{\'a}nos Koll{\'a}r, \emph{Higher direct images of dualizing sheaves. {II}},
  Ann. of Math. (2) \textbf{124} (1986), no.~1, 171--202. \MR{847955
  (87k:14014)}

\bibitem[LPS11]{Lazarsfeld+Popa+Schnell:BGG}
Robert Lazarsfeld, Mihnea Popa, and Christian Schnell, \emph{Canonical
  cohomology as an exterior module}, Pure Appl. Math. Q. \textbf{7} (2011),
  no.~4, Special Issue: In memory of Eckart Viehweg, 1529--1542. \MR{2918173}

\bibitem[MT09]{Mourougane+Takayama:Metric}
Christophe Mourougane and Shigeharu Takayama, \emph{Extension of twisted
  {H}odge metrics for {K}{\"a}hler morphisms}, J. Differential Geom.
  \textbf{83} (2009), no.~1, 131--161. \MR{2545032 (2010j:32033)}

\bibitem[Par12]{Pareschi:BasicResults}
Giuseppe Pareschi, \emph{Basic results on irregular varieties via
  {F}ourier-{M}ukai methods}, Current developments in algebraic geometry, Math.
  Sci. Res. Inst. Publ., vol.~59, Cambridge Univ. Press, Cambridge, 2012,
  pp.~379--403. \MR{2931876}

\bibitem[Pop12]{Popa:perverse}
Mihnea Popa, \emph{Generic vanishing filtrations and perverse objects in
  derived categories of coherent sheaves}, Derived categories in algebraic
  geometry, EMS Ser. Congr. Rep., Eur. Math. Soc., Z{\"u}rich, 2012,
  pp.~251--278. \MR{3050706}

\bibitem[PP03]{Pareschi+Popa:regI}
Giuseppe Pareschi and Mihnea Popa, \emph{Regularity on abelian varieties. {I}},
  J. Amer. Math. Soc. \textbf{16} (2003), no.~2, 285--302 (electronic).
  \MR{1949161 (2004c:14086)}

\bibitem[PP09]{Pareschi+Popa:cdf}
Giuseppe Pareschi and Mihnea Po{}pa, \emph{Strong generic vanishing and a
  higher-dimensional {C}astelnuovo-de {F}ranchis inequality}, Duke Math. J.
  \textbf{150} (2009), no.~2, 269--285. \MR{2569614 (2011b:14044)}

\bibitem[PP11a]{Pareschi+Popa:GV}
Giuseppe Pareschi and Mihnea Popa, \emph{G{V}-sheaves, {F}ourier-{M}ukai
  transform, and generic vanishing}, Amer. J. Math. \textbf{133} (2011), no.~1,
  235--271. \MR{2752940 (2012e:14043)}

\bibitem[PP11b]{Pareschi+Popa:regIII}
Giuseppe Pareschi and Mihnea Pop{}a, \emph{Regularity on abelian varieties
  {III}: relationship with generic vanishing and applications}, Grassmannians,
  moduli spaces and vector bundles, Clay Math. Proc., vol.~14, Amer. Math.
  Soc., Providence, RI, 2011, pp.~141--167. \MR{2807853 (2012h:14113)}

\bibitem[PS13]{Popa+Schnell:mhmgv}
Mihnea Popa and Christian Schnell, \emph{Generic vanishing theory via mixed
  {H}odge modules}, Forum Math. Sigma \textbf{1} (2013), e1, 60pp. \MR{3090229}

\bibitem[Sai88]{Saito:HodgeModules}
Morihiko Saito, \emph{Modules de {H}odge polarisables}, Publ. Res. Inst. Math.
  Sci. \textbf{24} (1988), no.~6, 849--995 (1989). \MR{1000123 (90k:32038)}

\bibitem[Sai90a]{Saito:Kaehler}
Mor{}ihiko Saito, \emph{Decomposition theorem for proper {K}\"ahler morphisms},
  Tohoku Math. J. (2) \textbf{42} (1990), no.~2, 127--147. \MR{1053945
  (91j:32042)}

\bibitem[Sai90b]{Saito:MixedHodgeModules}
Morihiko Saito, \emph{Mixed {H}odge modules}, Publ. Res. Inst. Math. Sci.
  \textbf{26} (1990), no.~2, 221--333. \MR{1047415 (91m:14014)}

\bibitem[Sai91]{Saito:Kollar}
Mor{}ihiko Saito, \emph{On {K}oll\'ar's conjecture}, Several complex variables
  and complex geometry, {P}art 2 ({S}anta {C}ruz, {CA}, 1989), Proc. Sympos.
  Pure Math., vol.~52, Amer. Math. Soc., Providence, RI, 1991, pp.~509--517.
  \MR{1128566 (92i:14007)}

\bibitem[Sch73]{Schmid:VHS}
Wilfried Schmid, \emph{Variation of {H}odge structure: the singularities of the
  period mapping}, Invent. Math. \textbf{22} (1973), 211--319. \MR{0382272 (52
  \#3157)}

\bibitem[Sch15a]{Schnell:holonomic}
Chri{}stian Schnell, \emph{Holonomic {D}-modules on abelian varieties}, Publ.
  Math. Inst. Hautes \'Etudes Sci. \textbf{121} (2015), 1--55. \MR{3349829}

\bibitem[Sch15b]{Schnell:lazarsfeld}
Christian Schnell, \emph{Torsion points on cohomology support loci: from
  {$\mathcal{D}$}-modules to {S}impson's theorem}, Recent advances in algebraic
  geometry, London Math. Soc. Lecture Note Ser., vol. 417, Cambridge Univ.
  Press, Cambridge, 2015, pp.~405--421. \MR{3380458}

\bibitem[Sch16]{Schnell:saito-vanishing}
{\relax Ch}ristian Schnell, \emph{On {S}aito's vanishing theorem}, Math. Res.
  Lett. \textbf{23} (2016), no.~2, 499--527.

\bibitem[Sim93]{Simpson:Subspaces}
Carlos~T. Simpson, \emph{Subspaces of moduli spaces of rank one local systems},
  Ann. Sci. \'Ecole Norm. Sup. (4) \textbf{26} (1993), no.~3, 361--401.
  \MR{1222278 (94f:14008)}

\bibitem[SV11]{Schmid+Vilonen:Unitary}
Wilfried Schmid and Kari Vilonen, \emph{Hodge theory and unitary
  representations of reductive {L}ie groups}, Frontiers of mathematical
  sciences, Int. Press, Somerville, MA, 2011, pp.~397--420. \MR{3050836}

\bibitem[Tak95]{Takegoshi:HigherDirectImages}
Kensh{\=o} Takegoshi, \emph{Higher direct images of canonical sheaves
  tensorized with semi-positive vector bundles by proper {K}\"ahler morphisms},
  Math. Ann. \textbf{303} (1995), no.~3, 389--416. \MR{1354997 (96j:32035)}

\bibitem[Uen75]{Ueno:ClassificationTheory}
Kenji Ueno, \emph{Classification theory of algebraic varieties and compact
  complex spaces}, Lecture Notes in Mathematics, Vol. 439, Springer-Verlag,
  Berlin-New York, 1975, Notes written in collaboration with P. Cherenack.
  \MR{0506253 (58 \#22062)}

\bibitem[Var86]{Varouchas:image}
Jean Varouchas, \emph{Sur l'image d'une vari{\'e}t{\'e} k{\"a}hl{\'e}rienne
  compacte}, Fonctions de plusieurs variables complexes, {V} ({P}aris,
  1979--1985), Lecture Notes in Math., vol. 1188, Springer, Berlin, 1986,
  pp.~245--259. \MR{926290 (89d:32064)}

\bibitem[Wan16]{Wang:TorsionPoints}
Botong Wang, \emph{Torsion points on the cohomology jump loci of compact
  {K}\"ahler manifolds}, Math. Res. Lett. \textbf{23} (2016), no.~2, 545--563.

\bibitem[Zha97]{Zhang:Quaternions}
Fuzhen Zhang, \emph{Quaternions and matrices of quaternions}, Linear Algebra
  Appl. \textbf{251} (1997), 21--57. \MR{1421264 (97h:15020)}

\bibitem[Zuc79]{Zucker:DegeneratingCoefficients}
Steven Zucker, \emph{Hodge theory with degenerating coefficients. {$L_{2}$}
  cohomology in the {P}oincar\'e metric}, Ann. of Math. (2) \textbf{109}
  (1979), no.~3, 415--476. \MR{534758 (81a:14002)}

\end{thebibliography}

\end{document}